\DeclareMathOperator{\Img}{Im}
\DeclareMathOperator{\Log}{Log}
\newcommand{\reg}[1]{\mathcal{R}^{#1}}
\newcommand{\cnt}{\mathrm{C}}
\newcommand{\creg}[1]{\mathcal{R}_{c}^{#1}}
\newcommand{\crd}[1]{\mathrm{C}_{\mathrm{rd}}^{#1}}
\newcommand{\C}{\mathbb{C}}
\newcommand{\N}{\mathbb{N}}
\newcommand{\R}{\mathbb{R}}
\newcommand{\T}{\mathbb{T}}
\newcommand{\Z}{\mathbb{Z}}
\newcommand{\qZ}{\overline{q^{\mathbb{Z}}}}
\newcommand{\cf}{A}
\newcommand{\dly}{\alpha}
\newcommand{\ftrm}{f}
\newcommand{\initval}{x_{0}}
\newcommand{\initfun}{\varphi}
\newcommand{\fun}{\mathcal{X}}
\begin{document}

\title{On Different Types of Stability\\ for Linear Delay Dynamic Equations}

\runtitle{Stability for Linear Delay Dynamic Equations}

\author{Elena Braverman and Ba\c{s}ak Karpuz}
\runauthor{E. Braverman and B. Karpuz}

\address{E. Braverman: Department of Mathematics and Statistics, University of Calgary, 2500 University Dr. NW, Calgary, AB T2N 1N4, Canada.
\email{maelena@ucalgary.ca}}
\address{B. Karpuz: Department of Mathematics, Faculty of Science, T{\i}naztepe Campus, Dokuz Eyl\"{u}l University, Buca, 35160 \.{I}zmir, Turkey.
\email{bkarpuz@gmail.com}}


\abstract{
We provide explicit conditions for uniform stability, global asymptotic stability
and uniform exponential stability for dynamic equations with a single delay and a nonnegative
coefficient.
Some examples on nonstandard time scales are also given to show applicability and sharpness of the new
results.
}

\keywords{Delay dynamic equations, Global stability, Uniform exponential stability, Global asymptotic stability}


\primclass{34K20}
\secclasses{34N05}

\received{May 2, 2016}
\revised{September 26, 2016}
\logo{36}{2017}{3}{343}
\doi{3445}

\maketitle

\section{Introduction}

Different types of stability for linear delay differential and difference equations, even with a single delay,
continue to attract attention, see, for example, the recent papers \cite{MR3396349,MR3418565} and references therein.
For linear delay equations, the stability type has been connected to properties of the kernels of
solution representations \cite{MR2266563,MR2795517,MR3418565}.

The purpose of the present paper is two-fold:
\begin{enumerate}[label={\arabic*.},leftmargin={*},ref={\arabic*}]
\item to unify the results connecting different types of
    stability with estimates of the fundamental solutions
    for delay differential and difference equations,
    and extend them to equations on other types of time scales;
\item to outline the difference between discrete and continuous time scales:
    the two parts of our main results coincide for differential equations
    but have a meaningful difference for difference equations; for other time scales,
    the two conditions coincide at the right-dense points and differ at the right-scattered ones.
\end{enumerate}
In this paper, we study uniform stability, global asymptotic stability and uniform exponential stability
of solutions to the delay dynamic equation
\begin{equation}
x^{\Delta}(t)+\cf(t)x(\dly(t))=0\quad\text{for}\ t\in[t_{0},\infty)_{\T},\label{introeq1}
\end{equation}
where $\T$ is a time scale unbounded above, $\cf\in\crd{}([t_{0},\infty)_{\T},\R_{0}^{+})$
and $\dly\in\crd{}([t_{0},\infty)_{\T},\T)$ with $\lim_{t\to\infty}\dly(t)=\infty$.
Here, the notations $\R^{+}:=(0,\infty)$ and $\R_{0}^{+}:=[0,\infty)$ are used.
Further, we will be assuming one of the following properties for the delay function $\dly$:
\begin{enumerate}[label={(A\arabic*)},leftmargin={*},ref={(A\arabic*)}]
\item\label{a1} $\dly(t)\leq{}t$ for all $t\in[t_{0},\infty)_{\T}$;
\item\label{a2} $\dly(\sigma(t))\leq{}t$ for all $t\in[t_{0},\infty)_{\T}$.
\end{enumerate}
To use in the sequel, we need to define the least value of the delayed argument
\begin{equation}
\dly_{\ast}(t):=\inf\{\dly(\eta):\ \eta\in[t,\infty)_{\T}\}
\quad\text{for}\ t\in[t_{0},\infty)_{\T}.\notag
\end{equation}
Note here that $\dly_{\ast}(t)>-\infty$ for any $t\in[t_{0},\infty)_{\T}$
since $\lim_{t\to\infty}\dly(t)=\infty$ yields that there exists $t_{1}\in[t_{0},\infty)_{\T}$ such that
$\dly(t)\geq{}t_{0}$ for all $t\in[t_{1},\infty)_{\T}$ and $\inf_{\eta\in[t_{0},t_{1}]_{\T}}\dly_{\ast}(\eta)$
is finite by \cite[Theorem~1.60\,(ii) and Theorem~1.65]{MR1843232}.
Clearly, $\dly_{\ast}$ is a nondecreasing rd-continuous function on $[t_{0},\infty)_{\T}$.
Further, for monotone nondecreasing $\dly$, we have $\dly_{\ast}=\dly$ on $[t_{0},\infty)_{\T}$.
We also define
\begin{equation}
\dly_{-1}(t):=\sup\{\eta\in[t_{0},\infty)_{\T}:\ \dly_{\ast}(\eta)\leq{}t\}
\quad\text{for}\ t\in[t_{0},\infty)_{\T}.\notag
\end{equation}
It is easy to see that for each $s\in[t_{0},\infty)_{\T}$, we have $\dly_{\ast}(t)\geq{}s$ for all $t\in[\dly_{-1}(s),\infty)_{\T}$.

If the delay $\dly$ is strict, i.e., $\dly(t)<t$ for all $t\in[t_{0},\infty)_{\T}$, then \ref{a2} holds.
In particular, \ref{a1} and \ref{a2} are the same for $\T=\R$ since $\sigma(t)=t$ for all $t\in\R$,
while \ref{a1} is weaker than \ref{a2} for $\T=\Z$ since \ref{a2} means $\dly(t)\leq{}t-1$ for all $t\in\Z$.

The so-called Hilger-derivative $x^{\Delta}$ in \eqref{introeq1} turns out to be the usual derivative $x^{\prime}$ when $\T=\R$, and
the forward difference operator $\Delta$ when $\T=\Z$, i.e., $\Delta{}x(t)=x(t+1)-x(t)$ for $t\in\Z$.
Hence, our study here will unify some of the fundamental stability results for delay differential equations
\begin{equation}
x^{\prime}(t)+\cf(t)x(\dly(t))=0\quad\text{for}\ t\in[t_{0},\infty)_{\R} \label{introeq2}
\end{equation}
and  delay difference equations
\begin{equation}
\Delta{}x(t)+\cf(t)x(\dly(t))=0\quad\text{for}\ t\in[t_{0},\infty)_{\Z}.\label{introeq3}
\end{equation}
As presented in \cite[Examples~1.38{ -- }1.40]{MR1843232},
there exist some phenomena in real world applications which cannot be described by only either continuous or discrete models.
The present paper aims to extend the classical stability tests for more general type of equations called
dynamic equations.

Stability and asymptotic stability of the differential equation \eqref{introeq2} have been studied in \cite{MR1130462,MR0695252,MR1233677,MR0891356},
and the exponential stability of \eqref{introeq2} in \cite{MR2266563,MR2185238}.
Some results on stability and asymptotic stability of \eqref{introeq3} can be found in
\cite{MR1403453,MR1350434,MR1636333,MR2814561,MR3418565,MR1305480,MR1103855,MR1666138,MR1340734},
and some results on the exponential stability in \cite{MR2307338,MR2519555,MR3418565}.

We mention here \cite{MR2535793,MR2651874,MR2352908,MR2437887,MR3352729,MR2335381} and \cite{MR2640308,MR2116401,MR2510663,MR2313706,MR2464521,MR2197128,MR1974425,MR2292449},
which deal with asymptotic stability and exponential stability of the dynamic equation \eqref{introeq1}, respectively.
Further, we refer the readers to the papers \cite{MR1989020,MR1961592,MR1989027,MR3250516} for a discussion on stability and the time scale exponential function.

Let us proceed with definitions of the solution and the stability of \eqref{introeq1}.

\begin{definition}[Solution]\label{dfnsln}
A function $x:[\dly_{\ast}(t_{0}),\infty)_{\T}\to\R$,
which is rd-continuous on $[\dly_{\ast}(t_{0}),t_{0}]_{\T}$ and $\Delta$-differentiable on $[t_{0},\infty)_{\T}$
with an rd-conti\-nuous derivative,
is called a \textbf{solution} of \eqref{introeq1}
provided that it satisfies the equality \eqref{introeq1} identically on $[t_{0},\infty)_{\T}$.
\end{definition}

\begin{definition}[Uniform Stability]\label{dfnus}
The trivial solution of \eqref{introeq1} is said to be \textbf{uniformly stable}
if for any $\varepsilon\in\R^{+}$, there exists $\delta\in\R^{+}$ such that for any $s\in[t_{0},\infty)_{\T}$,
any solution $x$ of the initial value problem
\begin{equation}
\begin{cases}
x^{\Delta}(t)+\cf(t)x(\dly(t))=0\quad\text{for}\ t\in[s,\infty)_{\T}\\
x(s)=\initval\quad\text{and}\quad{}x(t)=\initfun(t)\quad\text{for}\ t\in[\dly_{\ast}(s),s)_{\T},
\end{cases}\label{predf2eq1}
\end{equation}
with $|\initval|+\sup_{\eta\in{}[\dly_{\ast}(s),s)_{\T}}|\initfun(\eta)|<\delta$ satisfies $|x(t)|<\varepsilon$ for all $t\in[s,\infty)_{\T}$.
\end{definition}


\begin{definition}[Global Attractivity]\label{dfnga}
The trivial solution of \eqref{introeq1} is said to be \textbf{globally attracting}
if for any $s\in[t_{0},\infty)_{\T}$, any solution $x$ of the initial value problem \eqref{predf2eq1} satisfies $\lim_{t\to\infty}x(t)=0$.
\end{definition}

\begin{definition}[Global Asymptotic Stability]\label{dfngas}
The trivial solution of \eqref{introeq1} is said to be \textbf{globally asymptotically stable} if it is uniformly stable and globally attracting.
\end{definition}

\begin{definition}[Uniform Exponential Stability]\label{dfnues}
The trivial solution of \eqref{introeq1} is said to be \textbf{uniformly exponentially stable}
if there exist constants $M,\lambda\in\R^{+}$ such that for any $s\in[t_{0},\infty)_{\T}$,
any solution $x$ of the initial value problem \eqref{predf2eq1} satisfies
\begin{equation}
|x(t)|\leq{}M\mathrm{e}_{\ominus\lambda}(t,s)\bigg(|\initval|+\sup_{\eta\in{}[\dly_{\ast}(s),s)_{\T}}|\initfun(\eta)|\bigg)\quad\text{for all}\ t\in[s,\infty)_{\T}.\notag
\end{equation}
\end{definition}

It is shown in \cite[Theorem~2.1]{MR2927064} that for time scales with bounded graininess (such as $\T=\R$ and $\T=\Z$),
the term $\mathrm{e}_{\ominus\lambda}(t,s)$, where $\lambda\in\R^{+}$, in Definition~\ref{dfnues} can be replaced by
$\mathrm{e}_{-\lambda}(t,s)$, where $\lambda\in\R^{+}$ and $1-\lambda\mu(t)>0$ for all $t\in\T$ (see \cite[Theorem~2.1]{MR2927064}).

Obviously, one has the following implication chart (see \cite[Corollary~5.7]{MR3250516} or Corollary~\ref{alcrl1}):
\begin{center}
\begin{tabular}{c}
  Uniform\\
  Exponential\\
  Stability
\end{tabular}
$\implies$
\begin{tabular}{c}
  Global\\
  Asymptotic\\
  Stability
\end{tabular}
$\implies$
\begin{tabular}{c}
  Uniform\\
  Stability
\end{tabular}
\end{center}

The structure of the paper is as follows.
In Section~\ref{dal}, we start with fundamental properties related to
the fundamental solution of \eqref{introeq1},
which will be required in the sequel.
Section~\ref{srua1} and Section~\ref{srua2}
include explicit conditions for uniform stability,
global asymptotic stability and
uniform exponential stability
under the primary assumptions \ref{a1} and \ref{a2}, respectively.
In Section~\ref{sapp}, we provide three examples to show applicability
of the new results on some nonstandard time scales.
Section~\ref{findis} includes some directions for
future research.
Section~\ref{appx} is the appendix.
We describe auxiliary results
related to the fundamental solution in Subsection~\ref{appa},
the time scales exponential function in Subsection~\ref{appb} and
the basic time scales calculus is presented in Subsection~\ref{appc}.

\section{Definitions and Auxiliary Results}\label{dal}

In this section, we first introduce the notion of the fundamental solution and
the variation of parameters formula for \eqref{introeq1}.
Then, we will give three main theorems on the stability of \eqref{introeq1}.

\begin{definition}[Fundamental Solution]\label{daldf1}
For $s\in[t_{0},\infty)_{\T}$, the solution
$\fun=\fun(\cdot,s):[\dly_{\ast}(s),\infty)_{\T}\to\R$ of the initial value problem
\begin{equation}
\begin{cases}
x^{\Delta}(t)+\cf(t)x(\dly(t))=0\quad\text{for}\ t\in[s,\infty)_{\T},\\
x(s)=1\quad\text{and}\quad{}x(t)\equiv0\quad\text{for}\ t\in[\dly_{\ast}(s),s)_{\T}
\end{cases}\notag
\end{equation}
is called the \textbf{fundamental solution} of \eqref{introeq1}.
\end{definition}

The following result can be found in \cite[Lemma~2.2]{MR2683912}.

\begin{lemma}[Solution Representation]\label{dallm1}
Let $s\in[t_{0},\infty)_{\T}$ and $x$ be the solution of the initial value problem
\begin{equation}
\begin{cases}
x^{\Delta}(t)+\cf(t)x(\dly(t))=\ftrm(t) \quad\text{for}\ t\in[s,\infty)_{\T},\\
x(s)=\initval\quad\text{and}\quad{}x(t)=\initfun(t)\quad\text{for}\ t\in[\dly_{\ast}(s),s)_{\T},
\end{cases}\notag
\end{equation}
then
\begin{equation}
x(t)=\fun(t,s)\initval-\int_{s}^{t}\fun(t,\sigma(\eta))\cf(\eta)\initfun(\dly(\eta))\Delta\eta
+\int_{s}^{t}\fun(t,\sigma(\eta))\ftrm(\eta)\Delta\eta
\notag
\end{equation}
for $t\in[s,\infty)_{\T}$, where $\fun$ is the fundamental solution of \eqref{introeq1}.
We assume above that functions vanish out of their specified domains, i.e., $\initfun(t)\equiv0$ for $t\in[s,\infty)_{\T}$.
\end{lemma}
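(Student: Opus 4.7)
The plan is a direct verification: let $y(t)$ denote the right-hand side of the claimed identity and show that $y$ satisfies the same IVP as $x$ on $[s,\infty)_{\T}$, so that uniqueness forces $x=y$. The initial conditions are immediate. At $t=s$ both delta integrals are empty and $\fun(s,s)=1$ by Definition~\ref{daldf1}, giving $y(s)=\initval$; on $[\dly_{\ast}(s),s)_{\T}$ the value of $x$ is prescribed to be $\initfun$ by hypothesis, and the dynamic equation is asserted only for $t\in[s,\infty)_{\T}$, so nothing needs to be checked off of this interval.

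The core calculation is $y^{\Delta}(t)$, which I would obtain by applying the time-scale Leibniz rule $\bigl(\int_{s}^{t}g(t,\eta)\Delta\eta\bigr)^{\Delta}=g(\sigma(t),t)+\int_{s}^{t}g_{t}^{\Delta}(t,\eta)\Delta\eta$ to each of the two integrals in the definition of $y$. Their integrands have the form $\fun(t,\sigma(\eta))h(\eta)$, and because $\fun(\sigma(t),\sigma(t))=1$, the boundary contributions collapse precisely to $-\cf(t)\initfun(\dly(t))+\ftrm(t)$. The interior contributions, together with the $\fun_{t}^{\Delta}(t,s)\initval$ coming from the first term of $y$, can then be rewritten using Definition~\ref{daldf1}: both $\fun(\cdot,s)$ and, for each fixed $\eta$, $\fun(\cdot,\sigma(\eta))$ satisfy the homogeneous delay equation, so $\fun_{t}^{\Delta}(t,s)=-\cf(t)\fun(\dly(t),s)$ and $\fun_{t}^{\Delta}(t,\sigma(\eta))=-\cf(t)\fun(\dly(t),\sigma(\eta))$. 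Factoring $-\cf(t)$ out of these three surviving terms produces an expression $-\cf(t)\mathcal{B}(t)$, where $\mathcal{B}(t)$ is precisely the right-hand side of the claimed formula for $x(t)$ with $t$ replaced by $\dly(t)$ in the first argument of each $\fun$, but with the upper integration limits still equal to $t$.

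The remaining task is to match $-\cf(t)\bigl[\initfun(\dly(t))+\mathcal{B}(t)\bigr]$ with $-\cf(t)y(\dly(t))$, under the convention that $\initfun$ is extended by zero outside $[\dly_{\ast}(s),s)_{\T}$. If $\dly(t)<s$, then $\fun(\dly(t),s)=0$ and $\fun(\dly(t),\sigma(\eta))=0$ for every $\eta\in[s,t)_{\T}$, since $\sigma(\eta)\geq{}s>\dly(t)$; hence $\mathcal{B}(t)=0$, matching $y(\dly(t))=\initfun(\dly(t))$ given by the initial condition. If $\dly(t)\geq{}s$, then $\initfun(\dly(t))=0$ by the extension convention, and the vanishing $\fun(\dly(t),\sigma(\eta))=0$ for $\sigma(\eta)>\dly(t)$ permits truncation of both integrals in $\mathcal{B}(t)$ from $\int_{s}^{t}$ to $\int_{s}^{\dly(t)}$, identifying $\mathcal{B}(t)$ with $y(\dly(t))$. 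In either case $y^{\Delta}(t)+\cf(t)y(\dly(t))=\ftrm(t)$, and uniqueness for the linear delay IVP, obtained by stepwise construction on successive intervals where $\dly(\cdot)$ has already been resolved, yields $y\equiv x$ on $[s,\infty)_{\T}$.

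The main obstacle is precisely the boundary case analysis just described: verifying that the truncation $\int_{s}^{t}\to\int_{s}^{\dly(t)}$ is valid both at right-dense and at right-scattered values of $\dly(t)$, and that the extension-by-zero convention for $\initfun$ is applied consistently across the crossover $\dly(t)=s$. Once this bookkeeping is settled, the rest is a routine application of the Leibniz rule together with the defining property of the fundamental solution.
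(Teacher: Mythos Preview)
The paper does not prove this lemma; it simply cites \cite[Lemma~2.2]{MR2683912} (an earlier paper by the same authors). Your direct verification via the time-scale Leibniz rule is correct and is in fact the standard way such solution-representation formulas are established: check the initial value, differentiate using $\bigl(\int_{s}^{t}g(t,\eta)\Delta\eta\bigr)^{\Delta}=g(\sigma(t),t)+\int_{s}^{t}g_{t}^{\Delta}(t,\eta)\Delta\eta$, use the defining equation $\fun_{t}^{\Delta}(t,r)=-\cf(t)\fun(\dly(t),r)$ for $t\geq r$, and then reconcile the resulting expression with $-\cf(t)y(\dly(t))$ via the two cases $\dly(t)<s$ and $\dly(t)\geq s$.

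The boundary bookkeeping you flag as the main obstacle is handled correctly. In the case $\dly(t)\geq s$, the truncation $\int_{s}^{t}\to\int_{s}^{\dly(t)}$ works because for $\eta\in[\dly(t),t)_{\T}$ one has $\sigma(\eta)\geq\dly(t)$ with strict inequality whenever the point contributes mass: if $\eta>\dly(t)$ then $\sigma(\eta)>\dly(t)$ and $\fun(\dly(t),\sigma(\eta))=0$; if $\eta=\dly(t)$ is right-scattered then $\sigma(\eta)>\dly(t)$ and again $\fun(\dly(t),\sigma(\eta))=0$; if $\eta=\dly(t)$ is right-dense then $\fun(\dly(t),\sigma(\eta))=1$ but the point carries zero $\Delta$-measure. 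Uniqueness by the method of steps is also fine under the standing hypothesis $\lim_{t\to\infty}\dly(t)=\infty$.
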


For the next theorem, we introduce the condition
\begin{equation}
\sup_{s\in[t_{0},\infty)_{\T}}\negmedspace\bigg\{\negmedspace\int_{s}^{\dly_{-1}(s)}\cf(\eta)\Delta\eta\bigg\}<\infty, \label{daleq1}
\end{equation}
which is equivalent to $\limsup_{s\to\infty}\int_{s}^{\dly_{-1}(s)}\cf(\eta)\Delta\eta<\infty$.


\begin{theorem}\label{dalthm1}
Assume that \eqref{daleq1} holds.
Then, the following statements are equivalent.
\begin{enumerate}[label={(\roman*)},leftmargin={*},ref=(\roman*)]
\item\label{dalthm1it1} The trivial solution of \eqref{introeq1} is uniformly stable.
\item\label{dalthm1it2} There exists $M_{0}\in\R^{+}$ such that
    \begin{equation}
    |\fun(t,s)|\leq{}M_{0}\quad\text{for all}\ (t,s)\in\Lambda_{t_{0}},\label{dalthm1it2eq1}
    \end{equation}
    where
    \begin{equation}
    \Lambda_{t_{0}}:=\{(t,s)\in\T\times\T:\ t\geq{}s\geq{}t_{0}\}.\label{dalthm1it2eq2}
    \end{equation}
\end{enumerate}
\end{theorem}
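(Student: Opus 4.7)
The plan is to prove the two implications separately, noting that only the direction \ref{dalthm1it2}$\Rightarrow$\ref{dalthm1it1} relies on the bound \eqref{daleq1}; the reverse direction is a pure scaling argument.

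For \ref{dalthm1it1}$\Rightarrow$\ref{dalthm1it2}, I would apply the uniform stability definition with $\varepsilon=1$, obtaining some $\delta\in\R^{+}$. Fix an arbitrary $s\in[t_{0},\infty)_{\T}$ and consider the initial value problem \eqref{predf2eq1} with $\initval=\delta/2$ and $\initfun\equiv0$ on $[\dly_{\ast}(s),s)_{\T}$. By linearity of \eqref{introeq1} and Definition~\ref{daldf1}, the (unique) solution is $(\delta/2)\fun(\cdot,s)$. Since $|\initval|+\sup|\initfun|=\delta/2<\delta$, uniform stability forces $(\delta/2)|\fun(t,s)|<1$ on $[s,\infty)_{\T}$, whence $|\fun(t,s)|\le{}M_{0}:=2/\delta$ on $\Lambda_{t_{0}}$.

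For \ref{dalthm1it2}$\Rightarrow$\ref{dalthm1it1}, I would apply Lemma~\ref{dallm1} with $\ftrm\equiv0$ to write the solution of \eqref{predf2eq1} as
\begin{equation*}
x(t)=\fun(t,s)\initval-\int_{s}^{t}\fun(t,\sigma(\eta))\cf(\eta)\initfun(\dly(\eta))\Delta\eta.
\end{equation*}
The crucial observation is that, under the convention $\initfun\equiv0$ outside $[\dly_{\ast}(s),s)_{\T}$, the factor $\initfun(\dly(\eta))$ vanishes whenever $\dly(\eta)\ge{}s$. Since $\dly_{\ast}(\eta)\le\dly(\eta)$ and $\dly_{\ast}(\eta)\ge{}s$ for $\eta\in[\dly_{-1}(s),\infty)_{\T}$, the integrand is supported on $\eta\in[s,\dly_{-1}(s)]_{\T}$. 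Using \eqref{dalthm1it2eq1}, nonnegativity of $\cf$, and writing $K$ for the finite supremum in \eqref{daleq1}, I then estimate
\begin{equation*}
|x(t)|\le{}M_{0}|\initval|+M_{0}\Big(\sup_{\eta\in[\dly_{\ast}(s),s)_{\T}}|\initfun(\eta)|\Big)\int_{s}^{\dly_{-1}(s)}\cf(\eta)\Delta\eta\le{}M_{0}(1+K)\Big(|\initval|+\sup_{\eta}|\initfun(\eta)|\Big).
\end{equation*}
Given $\varepsilon\in\R^{+}$, the choice $\delta:=\varepsilon/(M_{0}(1+K))$ now realizes Definition~\ref{dfnus}.

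The only delicate point is justifying the effective truncation of the integration domain to $[s,\dly_{-1}(s)]_{\T}$, which is where the definitions of $\dly_{\ast}$ and $\dly_{-1}$ and the zero-extension convention of Lemma~\ref{dallm1} must be combined carefully. Once that support argument is in place, the rest is a bounded-kernel estimate of the variation-of-parameters formula, and no further input beyond \eqref{daleq1} and the bound on $\fun$ is needed.
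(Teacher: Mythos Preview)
Your proposal is correct and follows essentially the same route as the paper: a scaling argument using $\fun(\cdot,s)$ as initial data for \ref{dalthm1it1}$\Rightarrow$\ref{dalthm1it2}, and a bounded-kernel estimate of the variation-of-parameters formula from Lemma~\ref{dallm1}, with the integral truncated to $[s,\dly_{-1}(s)]_{\T}$ via the vanishing of $\initfun$, for \ref{dalthm1it2}$\Rightarrow$\ref{dalthm1it1}. Your explicit justification of the truncation step and the concrete choice of $\delta$ are slightly more detailed than the paper's version, but the argument is the same.
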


\begin{proof}
\noindent\ref{dalthm1it1}$\implies$\ref{dalthm1it2}
Let the trivial solution of \eqref{introeq1} be uniformly stable.
Given $\varepsilon\in\R^{+}$, there exists $\delta\in\R^{+}$ such that for any $s\in[t_{0},\infty)_{\T}$,
any solution $x$ of \eqref{predf2eq1} with $|\initval|+\sup_{\eta\in{}[\dly_{\ast}(s),s)_{\T}}|\initfun(\eta)|<\delta$
satisfies $|x(t)|<\varepsilon$ for all $t\in[s,\infty)_{\T}$.
For a fixed $s\in[t_{0},\infty)_{\T}$ and the solution $x(t):=\frac{\delta}{2}\fun(t,s)$ for $t\in[s,\infty)_{\T}$,
we see that $|x(t)|<\varepsilon$
for all $t\in[s,\infty)_{\T}$ since $|x(s)|+\sup_{\eta\in{}[\dly_{\ast}(s),s)_{\T}}|x(\eta)|=\frac{\delta}{2}<\delta$.
Hence, \eqref{dalthm1it2eq1} holds with $M_{0}:=\frac{2\varepsilon}{\delta}$.
\smallskip

\noindent\ref{dalthm1it2}$\implies$\ref{dalthm1it1}
Let $K_{0}:=\sup_{s\in[t_{0},\infty)_{\T}}\negmedspace\big\{\negmedspace\int_{s}^{\dly_{-1}(s)}\cf(\eta)\Delta\eta\big\}$.
Using Lemma~\ref{dallm1} and the vanishing property of the initial function $\initfun$, we have for all $t\in[s,\infty)_{\T}$
\begin{align}
|x(t)|\leq&M_{0}\bigg(\initval+\int_{s}^{t}\cf(\eta)\initfun(\dly(\eta))\Delta\eta\bigg)
\leq{}M_{0}\bigg(\initval+\int_{s}^{\dly_{-1}(s)}\cf(\eta)\initfun(\dly(\eta))\Delta\eta\bigg)\notag\\
\leq&M_{0}\bigg(\initval+K_{0}\sup_{\eta\in{}[\dly_{\ast}(s),s)_{\T}}|\initfun(\eta)|\bigg)
\leq{}M_{0}(K_{0}+1)\bigg(\initval+\sup_{\eta\in{}[\dly_{\ast}(s),s)_{\T}}|\initfun(\eta)|\bigg),\notag
\end{align}
from which the uniform stability of the trivial solution of \eqref{introeq1} follows.
\end{proof}

\begin{theorem}\label{dalthm2}
Assume that \eqref{daleq1} holds.
Then, the following statements are equivalent.
\begin{enumerate}[label={(\roman*)},leftmargin={*},ref=(\roman*)]
\item\label{dalthm2it1} The trivial solution of \eqref{introeq1} is globally attracting.
\item\label{dalthm2it2} $\lim_{t\to\infty}\fun(t,s)=0$ for any $s\in[t_{0},\infty)_{\T}$.
\end{enumerate}
\end{theorem}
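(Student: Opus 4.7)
The implication \ref{dalthm2it1}$\implies$\ref{dalthm2it2} is immediate. Fix $s\in[t_{0},\infty)_{\T}$ and take initial data $\initval=1$ and $\initfun\equiv0$; by Definition~\ref{daldf1} the unique solution of \eqref{predf2eq1} coincides with $\fun(\cdot,s)$, so global attractivity forces $\fun(t,s)\to0$ as $t\to\infty$.

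For the converse \ref{dalthm2it2}$\implies$\ref{dalthm2it1}, let $x$ solve \eqref{predf2eq1} with arbitrary $(\initval,\initfun)$. Applying Lemma~\ref{dallm1} with $\ftrm\equiv0$ gives
\begin{equation*}
x(t)=\fun(t,s)\initval-\int_{s}^{t}\fun(t,\sigma(\eta))\cf(\eta)\initfun(\dly(\eta))\Delta\eta.
\end{equation*}
The first term tends to $0$ by \ref{dalthm2it2}. Since $\initfun$ is extended by $0$ outside $[\dly_{\ast}(s),s)_{\T}$, and since $\eta\geq\dly_{-1}(s)$ forces $\dly(\eta)\geq\dly_{\ast}(\eta)\geq s$ by the property stated just after the definition of $\dly_{-1}$, the integrand vanishes for $\eta\geq\dly_{-1}(s)$. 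Thus for $t\geq\dly_{-1}(s)$ it remains to show
\begin{equation*}
I(t):=\int_{s}^{\dly_{-1}(s)}\fun(t,\sigma(\eta))\cf(\eta)\initfun(\dly(\eta))\Delta\eta\to 0\quad\text{as }t\to\infty.
\end{equation*}

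The integration domain $[s,\dly_{-1}(s)]_{\T}$ carries finite $\cf$-measure at most $K_{0}:=\sup_{s\in[t_{0},\infty)_{\T}}\int_{s}^{\dly_{-1}(s)}\cf(\eta)\Delta\eta<\infty$ by \eqref{daleq1}, and the integrand converges to $0$ pointwise in $\eta$ by the hypothesis \ref{dalthm2it2}. I would conclude $I(t)\to0$ by a dominated convergence argument for the $\Delta$-integral, with the integrand bounded by $M_{0}\cf(\eta)\sup_{\xi\in[\dly_{\ast}(s),s)_{\T}}|\initfun(\xi)|$ for a uniform constant $M_{0}$ controlling $|\fun(t,\sigma(\eta))|$ on the relevant region.

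The main obstacle is producing this dominator $M_{0}\geq\sup\{|\fun(t,\sigma(\eta))|:t\geq s,\ \eta\in[s,\dly_{-1}(s)]_{\T}\}$: rd-continuity together with $\fun(t,s')\to0$ only yields finiteness of $\sup_{t}|\fun(t,s')|$ for each fixed $s'$. I would therefore first show that, under \eqref{daleq1}, assumption \ref{dalthm2it2} forces $\fun$ to be globally bounded on $\Lambda_{t_{0}}$—equivalently, that global attractivity implies uniform stability via Theorem~\ref{dalthm1}. A contradiction argument seems most natural: unboundedness would require a sequence $s_{n}\to\infty$ (since $\fun(\cdot,s)$ is bounded for each fixed $s$), and iterating the integral identity $\fun(t,s_{n})=1-\int_{s_{n}}^{t}\cf(\eta)\fun(\dly(\eta),s_{n})\Delta\eta$ with \eqref{daleq1} and the pointwise decay in \ref{dalthm2it2} should preclude such blow-up. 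Once the uniform bound is secured, dominated convergence closes the argument.
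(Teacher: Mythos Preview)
Your reduction via Lemma~\ref{dallm1} to the integral over $[s,\dly_{-1}(s)]_{\T}$ matches the paper's, but the mechanism you propose for concluding $I(t)\to0$ is different and left incomplete. The dominated-convergence route needs the uniform dominator $M_{0}$, and you do not establish it: the sketch ``iterating the integral identity \dots\ should preclude such blow-up'' is not a proof, since the identity $\fun(t,s_{n})=1-\int_{s_{n}}^{t}\cf(\eta)\fun(\dly(\eta),s_{n})\Delta\eta$ reintroduces the very values of $\fun$ you are trying to control, and pointwise decay in \ref{dalthm2it2} gives no uniformity along a moving sequence $s_{n}$. As you yourself observe, a global bound on $\Lambda_{t_{0}}$ is (via Theorem~\ref{dalthm1}) equivalent to uniform stability, so your plan amounts to proving that global attractivity implies uniform stability---a separate result that is neither asserted by the present theorem nor supplied by your sketch.

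The paper sidesteps this entirely by appealing to Theorem~\ref{althm1} in the appendix, which establishes that $\fun$ is continuous as a function of two variables on $\Lambda_{t_{0}}$. From this the paper deduces
\[
\lim_{t\to\infty}\max_{\eta\in[s,\dly_{-1}(s)]_{\T}}|\fun(t,\eta)|=0
\]
for each fixed $s$, and then bounds
\[
|I(t)|\leq K_{0}\Big(\max_{\eta\in[s,\dly_{-1}(s)]_{\T}}|\fun(t,\eta)|\Big)\sup_{\eta\in[\dly_{\ast}(s),s)_{\T}}|\initfun(\eta)|,
\]
with $K_{0}$ the supremum in \eqref{daleq1}. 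No global bound on $\fun$ over $\Lambda_{t_{0}}$ and no dominated-convergence step are invoked; the ingredient you are missing is the joint continuity of $\fun$ supplied by Theorem~\ref{althm1}, used to upgrade the pointwise limit in \ref{dalthm2it2} to a uniform one on the fixed compact window $[s,\dly_{-1}(s)]_{\T}$.
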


\begin{proof}

\noindent\ref{dalthm2it1}$\implies$\ref{dalthm2it2}
If the trivial solution of \eqref{introeq1} is globally attracting,
then it is obvious from Definition~\ref{dfnga} that $\lim_{t\to\infty}\fun(t,s)=0$ for any $s\in[t_{0},\infty)_{\T}$.
\smallskip

\noindent\ref{dalthm2it2}$\implies$\ref{dalthm2it1}
By Theorem~\ref{althm1}, the fundamental solution $\fun$ (as a function of two variables)
is continuous in (the triangular domain) $\Lambda_{t_{0}}$, thus we have
\begin{equation}
\lim_{t\to\infty}\max_{\eta\in{}[s,\dly_{-1}(s)]_{\T}}|\fun(t,\eta)|=0
\quad\text{for any}\ s\in[t_{0},\infty)_{\T}.\notag
\end{equation}
The claim follows immediately from the inequality
\begin{align}
|x(t)|\leq&|\fun(t,s)||\initval|+\int_{s}^{\dly_{-1}(s)}|\fun(t,\sigma(\eta))|\cf(\eta)|\initfun(\dly(\eta))|\Delta\eta\notag\\
\leq&|\fun(t,s)||\initval|+K_{0}\bigg(\max_{\eta\in{}[s,\dly_{-1}(s)]_{\T}}|\fun(t,\eta)|\bigg)\bigg(\sup_{\eta\in{}[\dly_{\ast}(s),s)_{\T}}|\initfun(\eta)|\bigg)\notag
\end{align}
for all $t\in[\dly_{-1}(s),\infty)_{\T}$,
where $K_{0}:=\sup_{s\in[t_{0},\infty)_{\T}}\negmedspace\big\{\negmedspace\int_{s}^{\dly_{-1}(s)}\cf(\eta)\Delta\eta\big\}$.
\end{proof}

Now, we require the condition
\begin{equation}
\sup_{s\in[t_{0},\infty)_{\T}}\{\dly_{-1}(s)-s\}<\infty,\label{daleq2}
\end{equation}
which is equivalent to $\limsup_{s\to\infty}[\dly_{-1}(s)-s]<\infty$.

\begin{theorem}\label{dalthm3}
Assume that \eqref{daleq1} and \eqref{daleq2} hold.
Then, the following statements are equivalent.
\begin{enumerate}[label={(\roman*)},leftmargin={*},ref=(\roman*)]
\item\label{dalthm3it1} The trivial solution of \eqref{introeq1} is uniformly exponentially stable.
\item\label{dalthm3it2} There exist $M_{0},\lambda_{0}\in\R^{+}$ such that
    \begin{equation}
    |\fun(t,s)|\leq{}M_{0}\mathrm{e}_{\ominus\lambda_{0}}(t,s)\quad\text{for all}\ (t,s)\in\Lambda_{t_{0}},\label{dalthm3it2eq1}
    \end{equation}
    where $\Lambda_{t_{0}}$ is defined in \eqref{dalthm1it2eq2}.
\end{enumerate}
\end{theorem}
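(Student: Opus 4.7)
The plan is to derive both implications directly from the variation-of-parameters representation in Lemma~\ref{dallm1}. The forward direction \ref{dalthm3it1}$\Rightarrow$\ref{dalthm3it2} is immediate: the fundamental solution $\fun(\cdot,s)$ solves \eqref{predf2eq1} with $\initval=1$ and $\initfun\equiv0$, so $|\initval|+\sup_{\eta\in[\dly_{\ast}(s),s)_{\T}}|\initfun(\eta)|=1$, and applying Definition~\ref{dfnues} to this particular solution yields $|\fun(t,s)|\le M\mathrm{e}_{\ominus\lambda}(t,s)$ on $\Lambda_{t_{0}}$, which is \eqref{dalthm3it2eq1} with $M_{0}:=M$ and $\lambda_{0}:=\lambda$.

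For the converse \ref{dalthm3it2}$\Rightarrow$\ref{dalthm3it1}, I would apply Lemma~\ref{dallm1} with $\ftrm\equiv0$, insert the bound \eqref{dalthm3it2eq1}, and use the vanishing of $\initfun$ outside $[\dly_{\ast}(s),s)_{\T}$ to truncate the convolution integral to $[s,\dly_{-1}(s))_{\T}$, producing
\begin{equation}
|x(t)|\le M_{0}\mathrm{e}_{\ominus\lambda_{0}}(t,s)|\initval|+M_{0}\int_{s}^{\dly_{-1}(s)}\mathrm{e}_{\ominus\lambda_{0}}(t,\sigma(\eta))\cf(\eta)|\initfun(\dly(\eta))|\Delta\eta.\notag
\end{equation}
The task then reduces to factoring out $\mathrm{e}_{\ominus\lambda_{0}}(t,s)$ with a constant bounded uniformly in $s$, $t$ and $\eta$, so as to match the form prescribed by Definition~\ref{dfnues}.

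The key technical step is the semigroup identity $\mathrm{e}_{\ominus\lambda_{0}}(t,\sigma(\eta))=\mathrm{e}_{\ominus\lambda_{0}}(t,s)/\mathrm{e}_{\ominus\lambda_{0}}(\sigma(\eta),s)$ combined with the lower bound $\mathrm{e}_{\ominus\lambda_{0}}(\tau,s)\ge\exp(-\lambda_{0}(\tau-s))$ valid for $\tau\ge s$, the latter following from $\log(1+x)\le x$ applied to the cylinder transformation (a standard fact about the time scales exponential). For $\eta\in[s,\dly_{-1}(s))_{\T}$ in the integration range we have $\sigma(\eta)\le\dly_{-1}(s)$, so $\sigma(\eta)-s\le N:=\sup_{s}\{\dly_{-1}(s)-s\}<\infty$ by \eqref{daleq2}, and consequently $1/\mathrm{e}_{\ominus\lambda_{0}}(\sigma(\eta),s)\le\exp(\lambda_{0}N)$. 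Combining this with $K_{0}:=\sup_{s}\int_{s}^{\dly_{-1}(s)}\cf(\eta)\,\Delta\eta<\infty$ from \eqref{daleq1} yields
\begin{equation}
|x(t)|\le M_{0}\bigl(1+K_{0}\exp(\lambda_{0}N)\bigr)\mathrm{e}_{\ominus\lambda_{0}}(t,s)\bigg(|\initval|+\sup_{\eta\in[\dly_{\ast}(s),s)_{\T}}|\initfun(\eta)|\bigg),\notag
\end{equation}
delivering uniform exponential stability with $M:=M_{0}(1+K_{0}\exp(\lambda_{0}N))$ and $\lambda:=\lambda_{0}$. The main obstacle is precisely this estimate: without \eqref{daleq2} the factor $\exp(\lambda_{0}N)$ could blow up in $s$, and without \eqref{daleq1} the remaining integral could not be absorbed, which clarifies why Theorem~\ref{dalthm3} needs both hypotheses while Theorem~\ref{dalthm1} requires only \eqref{daleq1}.
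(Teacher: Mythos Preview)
Your proposal is correct and follows essentially the same route as the paper's proof: both directions proceed exactly as you describe, with \ref{dalthm3it2}$\Rightarrow$\ref{dalthm3it1} obtained from Lemma~\ref{dallm1}, truncation of the integral to $[s,\dly_{-1}(s))_{\T}$, the semigroup factorisation $\mathrm{e}_{\ominus\lambda_{0}}(t,\sigma(\eta))=\mathrm{e}_{\ominus\lambda_{0}}(t,s)\,\mathrm{e}_{\lambda_{0}}(\sigma(\eta),s)$, and the bound $\mathrm{e}_{\lambda_{0}}(\sigma(\eta),s)\le\exp(\lambda_{0}H_{0})$ (your $N$ is the paper's $H_{0}$). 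The paper cites \cite[Lemma~3.2]{MR2842561} for that last estimate, whereas you derive it directly from $\log(1+x)\le x$; the resulting constant $M_{0}(1+K_{0}\mathrm{e}^{\lambda_{0}H_{0}})$ is identical.
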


\begin{proof}
\noindent\ref{dalthm3it1}$\implies$\ref{dalthm3it2}
If the trivial solution of \eqref{introeq1} is uniformly stable,
then it is obvious from Definitions~\ref{dfnues} and \ref{daldf1} that \eqref{dalthm3it2eq1} holds.
\smallskip

\noindent\ref{dalthm3it2}$\implies$\ref{dalthm3it1}
Let
\begin{equation}
K_{0}:=\sup_{s\in[t_{0},\infty)_{\T}}\negmedspace\bigg\{\negmedspace\int_{s}^{\dly_{-1}(s)}\cf(\eta)\Delta\eta\bigg\}
\quad\text{and}\quad
H_{0}:=\sup_{s\in[t_{0},\infty)_{\T}}\{\dly_{-1}(s)-s\}.\notag
\end{equation}
Hence, Lemma~\ref{dallm1} and the vanishing property of the initial function $\initfun$ imply for all $t\in[s,\infty)_{\T}$ that
\begin{align*}
|x(t)|\leq&M_{0}\mathrm{e}_{\ominus\lambda_{0}}(t,s)\bigg(\initval+\int_{s}^{t}\mathrm{e}_{\ominus\lambda_{0}}(s,\sigma(\eta))\cf(\eta)\initfun(\dly(\eta))\Delta\eta\bigg)
\\
=&M_{0}\mathrm{e}_{\ominus\lambda_{0}}(t,s)\bigg(\initval+\int_{s}^{t}\mathrm{e}_{\lambda_{0}}(\sigma(\eta),s)\cf(\eta)\initfun(\dly(\eta))\Delta\eta\bigg)
\\
\leq&M_{0}\mathrm{e}_{\ominus\lambda_{0}}(t,s)\bigg(\initval+\int_{s}^{\dly_{-1}(s)}\mathrm{e}_{\lambda_{0}}(\sigma(\eta),s)\cf(\eta)\initfun(\dly(\eta))\Delta\eta\bigg)\\
\leq&M_{0}\mathrm{e}_{\ominus\lambda_{0}}(t,s)\bigg(\initval+\mathrm{e}^{\lambda_{0}H_{0}}\int_{s}^{\dly_{-1}(s)}\cf(\eta)\initfun(\dly(\eta))\Delta\eta\bigg)\\
\leq&M_{0}\mathrm{e}_{\ominus\lambda_{0}}(t,s)\bigg(\initval+K_{0}\mathrm{e}^{\lambda_{0}H_{0}}\sup_{\eta\in{}[\dly_{\ast}(s),s)_{\T}}|\initfun(\eta)|\bigg)\\
\leq&M_{0}\mathrm{e}_{\ominus\lambda_{0}}(t,s)\big(K_{0}\mathrm{e}^{\lambda_{0}H_{0}}+1\big)\bigg(\initval+\sup_{\eta\in{}[\dly_{\ast}(s),s)_{\T}}|\initfun(\eta)|\bigg),
\end{align*}
from which the uniform exponential stability of the trivial solution of \eqref{introeq1} follows.
Note that we have used \cite[Lemma~3.2]{MR2842561} in the third step.
It should be noted here that the integral variable satisfies $\eta\in[s,\dly_{-1}(s))_{\T}$,
which implies $\sigma(\eta)\in[s,\dly_{-1}(s)]_{\T}$.
\end{proof}

\begin{remark}
Note that the implication \ref{dalthm1it1}$\Rightarrow$\ref{dalthm1it2} of Theorem~\ref{dalthm1} and \ref{dalthm2it1}$\Rightarrow$\ref{dalthm2it2} of Theorem~\ref{dalthm2} hold without the additional
assumption \eqref{daleq1},
and the part \ref{dalthm3it1}$\Rightarrow$\ref{dalthm3it2} of Theorem~\ref{dalthm3} requires neither \eqref{daleq1} nor \eqref{daleq2}.
\end{remark}

Clearly, Theorem~\ref{dalthm3} improves \cite[Theorem~4.1]{MR2927064}.

\begin{remark}\label{dalrmk2}
Consider the conditions
\begin{equation}
\sup_{s\in[t_{0},\infty)_{\T}}\negmedspace\bigg\{\negmedspace\int_{s}^{\infty}\cf(\eta)\chi_{(-\infty,s)_{\T}}(\dly(\eta))\Delta\eta\bigg\}<\infty\label{dalrmk2eq1}
\end{equation}
and for all fixed $\lambda\in\R^{+}$
\begin{equation}
\sup_{s\in[t_{0},\infty)_{\T}}\negmedspace\bigg\{\negmedspace\int_{s}^{\infty}\mathrm{e}_{\lambda}(\sigma(\eta),s)
\cf(\eta)\chi_{(-\infty,s)_{\T}}(\dly(\eta))\Delta\eta\bigg\}<\infty,
\label{dalrmk2eq2}
\end{equation}
where $\chi_{D}:D\to\{0,1\}$ is the characteristic function of the set $D\subset\R$,
i.e., $\chi_{D}(t)=1$ for $t\in{}D$ and $\chi_{D}(t)=0$ for $t\not\in{}D$.
The condition $\lim_{t\to\infty}\dly(t)=\infty$ and the function $\dly_{-1}$ in Theorems~\ref{dalthm1}, \ref{dalthm2} and \ref{dalthm3} can be omitted
by assuming \eqref{dalrmk2eq1} and \eqref{dalrmk2eq2} instead of \eqref{daleq1} and \eqref{daleq2}, respectively.
\end{remark}

The following example demonstrates that the conditions \eqref{dalrmk2eq1} and \eqref{dalrmk2eq2}
(thus \eqref{daleq1} and \eqref{daleq2}) are crucial in Theorems~\ref{dalthm1}, \ref{dalthm2} and \ref{dalthm3},
as well as the condition $\lim_{t\to\infty}\dly(t)=\infty$.

\begin{example}\label{dalex1}
Consider the time scale $\mathbb{P}_{1,1}=\cup_{k\in\Z}[2k,2k+1]_{\R}=\cdots\cup[0,1]_{\R}\cup[2,3]_{\R}\cup\cdots$ and the dynamic equation
\begin{equation}
x^{\Delta}(t)+x(\dly(t))=0\quad\text{for}\ t\in[0,\infty)_{\mathbb{P}_{1,1}},\label{dalex1eq1}
\end{equation}
where $\dly(t):=t$ if
$\mu(t)=0$ and $\dly(t):=-1$ if $\mu(t)=1$ for $t\in[0,\infty)_{\mathbb{P}_{1,1}}$,
where $\mu$ is the graininess function defined in Section~\ref{appc}.
We show below that \eqref{dalrmk2eq1} does not hold.
Simply, we have
\begin{equation}
\chi_{(-\infty,s)_{\T}}(\dly(t))
=
\begin{cases}
\chi_{(-\infty,s)_{\T}}(t)=0,&t\geq{}s,\ t\in[2k,2k+1)_{\R}\ \text{and}\ k\in\N_{0},\\
\chi_{(-\infty,s)_{\T}}(-1)=1,&t\geq{}s,\ t=2k+1\ \text{and}\ k\in\N_{0}
\end{cases}\notag
\end{equation}
for $s\in[t_{0},\infty)_{\T}$.
By \cite[Theorem~1.75]{MR1843232}, we get
\begin{equation}
\int_{s}^{\infty}\chi_{(-\infty,s)_{\T}}(\dly(\eta))\Delta\eta=\sum_{\eta\in[s,\infty)_{2\N_{0}+1}}\mu(\eta)=\infty\quad\text{for}\ s\in[t_{0},\infty)_{\T}.\notag
\end{equation}
On the other hand, examining \eqref{dalex1eq1} yields the system
\begin{equation}
\begin{cases}
x^{\prime}(t)+x(t)=0\quad\text{for}\ t\in[2k,2k+1)_{\R}\ \text{and}\ k\in\N_{0},\\
\Delta{}x(2k+1)+x(-1)=0\quad\text{for}\ k\in\N_{0},
\end{cases}\notag
\end{equation}
whose solution is
\begin{equation}
x(t)=
\begin{cases}
\frac{1}{\mathrm{e}^{k}}x(0)-\frac{\mathrm{e}}{\mathrm{e}-1}\big(1-\frac{1}{\mathrm{e}^{k}}\big)x(-1),&t=2k\ \text{and}\
k\in\N_{0},\\
x(2k)\mathrm{e}^{-(t-2k)},&t\in(2k,2k+1]_{\R}\ \text{and}\ k\in\N_{0}.
\end{cases}\notag
\end{equation}
Clearly, $\limsup_{t\to\infty}|x(t)|=\frac{\mathrm{e}}{\mathrm{e}-1}|x(-1)|>0$ provided that $x(-1)\neq0$.
We can easily show that the fundamental solution $\fun$ of \eqref{dalex1eq1} is positive.
Further, we can estimate that $0\leq\fun(t,s)\leq\mathrm{e}{\thinspace}\mathrm{e}^{-(\frac{t-s}{2})}$ for all $(t,s)\in\Lambda_{0}$,
where $\Lambda$ is defined in \eqref{dalthm1it2eq2}.
This implies $\lim_{t\to\infty}\fun(t,s)=0$ uniformly in $s\in[0,\infty)_{\mathbb{P}_{1,1}}$
but the trivial solution of \eqref{dalex1eq1} is not globally attracting.
Thus, the conclusion of Theorem~\ref{dalthm2} may not be valid without \eqref{dalrmk2eq1}.

Further, \eqref{dalrmk2eq2} is not fulfilled for this example.
Since the graininess function is bounded ($\mu(t)\leq1$ for all $t\in\mathbb{P}_{1,1}$),
by \cite[Lemma~2.3]{MR2927064},
we see that the fundamental solution of \eqref{dalex1eq1}
satisfies the exponential estimate \eqref{dalthm3it2eq1} but not \eqref{dalrmk2eq2}.
The trivial solution of \eqref{introeq1} is not uniformly exponentially stable.
Thus, the conclusion of Theorem~\ref{dalthm3} may also not be valid without \eqref{daleq1} and \eqref{daleq2}.
\end{example}

%

\section{Stability Results under {\protect\ref{a1}}}\label{srua1}

In this section, we will provide stability results under the condition \ref{a1}.
We will start with a technical lemma and then estimate the fundamental solution.
And, finally, the last three subsections of this section will provide explicit conditions for uniform stability,
global asymptotic stability and uniform exponential stability, respectively.

\subsection{A Technical Lemma}\label{stla1}


\begin{lemma}\label{stla1lm1}
Assume \ref{a1} and
\begin{equation}
\sup_{t\in[t_{0},\infty)_{\T}}\negmedspace\bigg\{\negmedspace\int_{\dly_{\ast}(t)}^{\sigma(t)}\cf(\eta)\Delta\eta\bigg\}<1.\label{stla1lm1eq1}
\end{equation}
Then, there exists $\lambda_{0}\in(0,1)_{\R}$ such that
\begin{equation}
\int_{\dly_{\ast}(t)}^{\sigma(t)}\mathrm{e}_{\lambda_{0}\cf}(t,\dly_{\ast}(\eta))\cf(\eta)\Delta\eta
<\frac{1-\lambda_{0}}{1+\lambda_{0}\cf(t)\mu(t)}\quad\text{for all}\ t\in[t_{0},\infty)_{\T}.\notag
\end{equation}
\end{lemma}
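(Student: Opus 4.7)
Set $K := \sup_{t\in[t_{0},\infty)_{\T}}\int_{\dly_{\ast}(t)}^{\sigma(t)}\cf(\eta)\Delta\eta$; by hypothesis $K<1$. The plan is to bound both sides of the desired inequality by quantities depending only on $\lambda_{0}$ and $K$, reducing the claim to a scalar inequality that holds strictly at $\lambda_{0}=0$ and therefore persists for small $\lambda_{0}>0$ by continuity.

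The central step is a uniform estimate $\mathrm{e}_{\lambda_{0}\cf}(t,\dly_{\ast}(\eta))\leq\mathrm{e}^{2\lambda_{0}K}$ valid for every $\eta\in[\dly_{\ast}(t),\sigma(t)]_{\T}$. Using the basic inequality $\xi_{h}(z)\leq z$ for $z\geq 0$ (i.e.\ $\ln(1+hz)\leq hz$), one has $\mathrm{e}_{\lambda_{0}\cf}(t,s)\leq\exp\bigl(\lambda_{0}\int_{s}^{t}\cf(\tau)\Delta\tau\bigr)$ when $s\leq t$, and $\mathrm{e}_{\lambda_{0}\cf}(t,s)\leq 1$ when $s>t$ because the integrand $\xi_{\mu}(\lambda_{0}\cf)$ is nonnegative. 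Since $\dly_{\ast}$ is nondecreasing, $\eta\geq\dly_{\ast}(t)$ yields $\dly_{\ast}(\eta)\geq\dly_{\ast}(\dly_{\ast}(t))$. In the case $\dly_{\ast}(\eta)\leq t$, splitting at $\dly_{\ast}(t)$ gives
\[
\int_{\dly_{\ast}(\eta)}^{t}\cf(\tau)\Delta\tau\leq\int_{\dly_{\ast}(\dly_{\ast}(t))}^{\sigma(\dly_{\ast}(t))}\cf(\tau)\Delta\tau+\int_{\dly_{\ast}(t)}^{\sigma(t)}\cf(\tau)\Delta\tau\leq 2K,
\]
each summand being a value of the functional whose supremum defines $K$. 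In the remaining case $\dly_{\ast}(\eta)>t$ (possible only when $\eta=\sigma(t)$), the exponential is at most $1\leq\mathrm{e}^{2\lambda_{0}K}$ automatically.

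Inserting this uniform bound into the integral and invoking the hypothesis once more,
\[
\int_{\dly_{\ast}(t)}^{\sigma(t)}\mathrm{e}_{\lambda_{0}\cf}(t,\dly_{\ast}(\eta))\cf(\eta)\Delta\eta\leq K\mathrm{e}^{2\lambda_{0}K}.
\]
For the denominator of the right-hand side of the target inequality, one has $\mu(t)\cf(t)=\int_{t}^{\sigma(t)}\cf(\tau)\Delta\tau\leq\int_{\dly_{\ast}(t)}^{\sigma(t)}\cf(\tau)\Delta\tau\leq K$, so $1+\lambda_{0}\cf(t)\mu(t)\leq 1+\lambda_{0}K$. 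It therefore suffices to choose $\lambda_{0}\in(0,1)_{\R}$ so small that $K\mathrm{e}^{2\lambda_{0}K}(1+\lambda_{0}K)<1-\lambda_{0}$; at $\lambda_{0}=0$ this reduces to $K<1$, which is exactly the hypothesis, and the strict inequality persists for all sufficiently small positive $\lambda_{0}$ by continuity of both sides in $\lambda_{0}$.

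I expect the main obstacle to be the uniform exponential estimate, because a priori $\dly_{\ast}(\eta)$ may lag arbitrarily far behind $t$ and a naive one-step bound cannot control $\mathrm{e}_{\lambda_{0}\cf}(t,\dly_{\ast}(\eta))$. The decisive observation is that, by monotonicity of $\dly_{\ast}$, the argument $\dly_{\ast}(\eta)$ is always bounded below by the iterated delay $\dly_{\ast}(\dly_{\ast}(t))$, so the integral of $\cf$ from $\dly_{\ast}(\eta)$ up to $t$ is absorbed by \emph{two} copies of the supremum $K$, producing the uniform factor $\mathrm{e}^{2\lambda_{0}K}$ that makes the continuity argument go through.
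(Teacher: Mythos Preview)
Your proof is correct and follows essentially the same route as the paper's: both bound $\mathrm{e}_{\lambda\cf}(t,\dly_{\ast}(\eta))$ uniformly by $\mathrm{e}^{2\lambda K}$ via the iterated-delay observation $\dly_{\ast}(\eta)\geq\dly_{\ast}^{2}(t)$, bound $\cf(t)\mu(t)\leq K$, and then reduce to a one-variable continuity argument in $\lambda$. The only cosmetic difference is that the paper fixes an auxiliary $\nu_{0}\in(K,1)$ and applies the intermediate value theorem to $\phi(\lambda)=\frac{1-\lambda}{1+\lambda\nu_{0}}-\nu_{0}\mathrm{e}^{2\lambda\nu_{0}}$ to locate $\lambda_{0}$, whereas you work directly with $K$ and invoke continuity at $\lambda=0$; these are equivalent.
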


\begin{proof}
It follows from \eqref{stla1lm1eq1} that there exists $\nu_{0}\in(0,1)_{\R}$ such that
\begin{equation}
\int_{\dly_{\ast}(t)}^{\sigma(t)}\cf(\eta)\Delta\eta<\nu_{0}\quad\text{for all}\ t\in[t_{0},\infty)_{\T},\label{mrlmprfeq1}
\end{equation}
which implies
\begin{equation}
\cf(t)\mu(t)<\nu_{0}\quad\text{and}\quad\int_{\dly_{\ast}^{2}(t)}^{\sigma(t)}\cf(\eta)\Delta\eta<2\nu_{0}
\quad\text{for all}\ t\in[t_{0},\infty)_{\T}.\label{mrlmprfeq2}
\end{equation}
Now, we can estimate
\begin{equation}
\int_{\dly_{\ast}(t)}^{\sigma(t)}\mathrm{e}_{\lambda\cf}(t,\dly_{\ast}(\eta))\cf(\eta)\Delta\eta
\leq\mathrm{e}_{\lambda\cf}(t,\dly_{\ast}^{2}(t))\int_{\dly_{\ast}(t)}^{\sigma(t)}\cf(\eta)\Delta\eta\notag
\end{equation}
for all $\lambda\in(0,1)_{\R}$ and all $t\in[t_{0},\infty)_{\T}$.
This yields by using \cite[Lemma~2.3]{MR2842561}, \eqref{stla1lm1eq1} and \eqref{mrlmprfeq2} that
\begin{equation}
\int_{\dly_{\ast}(t)}^{\sigma(t)}\mathrm{e}_{\lambda\cf}(t,\dly_{\ast}(\eta))\cf(\eta)\Delta\eta<\nu_{0}\mathrm{e}^{2\lambda\nu_{0}}\label{mrlmprfeq5}
\end{equation}
for all $\lambda\in(0,1)_{\R}$ and all $t\in[t_{0},\infty)_{\T}$.

Let us define the function $\phi\in\cnt{}([0,1]_{\R},\R)$ by the formula
\begin{equation}
\phi(\lambda)=\frac{1-\lambda}{1+\lambda\nu_{0}}-\nu_{0}\mathrm{e}^{2\lambda\nu_{0}}\quad\text{for}\ \lambda\in[0,1]_{\R}.\notag
\end{equation}
Clearly, $\phi(0)=1-\nu_{0}>0$ and $\phi(1)=-\nu_{0}\mathrm{e}^{2\nu_{0}}<0$.
Therefore, we may find $\lambda_{0}\in(0,1)_{\R}$ such that $\phi(\lambda_{0})=0$.
Using \eqref{mrlmprfeq2} and \eqref{mrlmprfeq5}, we have
\begin{align}
\int_{\dly_{\ast}(t)}^{\sigma(t)}\mathrm{e}_{\lambda_{0}\cf}(t,\dly_{\ast}(\eta))\cf(\eta)\Delta\eta
<&\nu_{0}\mathrm{e}^{2\lambda_{0}\nu_{0}}
=\frac{1-\lambda_{0}}{1+\lambda_{0}\nu_{0}}-\phi(\lambda_{0})
=\frac{1-\lambda_{0}}{1+\lambda_{0}\nu_{0}}\notag\\
\leq&\frac{1-\lambda_{0}}{1+\lambda_{0}\cf(t)\mu(t)}\notag
\end{align}
for all $t\in[t_{0},\infty)_{\T}$,
which concludes the proof.
\end{proof}

\subsection{Some Properties of the Fundamental Solution}\label{spfsa1}

\begin{lemma}\label{spfsa1lm1}
Assume \ref{a1} and
\begin{equation}
\int_{\dly(t)}^{\sigma(t)}\cf(\eta)\Delta\eta
\leq1\quad\text{for all}\ t\in[t_{0},\infty)_{\T}.\label{spfsa1lm1eq1}
\end{equation}
Then,
\begin{equation}
|\fun(t,s)|\leq1\quad\text{for all}\ (t,s)\in\Lambda_{t_{0}},\label{spfsa1lm1eq2}
\end{equation}
where $\Lambda_{t_{0}}$ is defined in \eqref{dalthm1it2eq2}.
\end{lemma}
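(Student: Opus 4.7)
The plan is to apply the principle of induction on time scales to the statement
\[
P(t):\quad |\fun(\tau, s)| \leq 1 \text{ for every } \tau \in [s, t]_{\T}.
\]
The base case $P(s)$ is immediate, since $\fun(s, s) = 1$, and the left-dense inductive step is a direct consequence of the continuity of $\fun(\cdot, s)$ on $\T$.

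The crucial step is the right-scattered one: assuming $P(t)$, I aim to deduce $P(\sigma(t))$, which amounts to showing $|\fun(\sigma(t), s)| \leq 1$. If $\dly(t) < s$, then $\fun(\dly(t), s) = 0$ by the initial condition, and the jump formula $\fun(\sigma(t), s) = \fun(t, s) - \mu(t) \cf(t) \fun(\dly(t), s)$ reduces to $\fun(\sigma(t), s) = \fun(t, s)$, which satisfies the bound by the inductive hypothesis. Otherwise $\dly(t) \geq s$, and I argue by contradiction: suppose $\fun(\sigma(t), s) > 1$ (the opposite sign is handled symmetrically by linearity). The jump formula together with $\fun(t, s) \leq 1$ forces $\mu(t)\cf(t)\fun(\dly(t), s) < 0$, whence $\fun(\dly(t), s) < 0$. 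Integrating the equation over $[\dly(t), \sigma(t))_{\T}$ yields
\[
\fun(\sigma(t), s) = \fun(\dly(t), s) - \int_{\dly(t)}^{\sigma(t)} \cf(\eta) \fun(\dly(\eta), s)\Delta\eta.
\]
For every $\eta$ in the integration range one has $\dly(\eta) \leq \eta \leq t$; by the inductive hypothesis (with the convention $\fun \equiv 0$ below $s$), $|\fun(\dly(\eta), s)| \leq 1$. Hence $-\cf(\eta)\fun(\dly(\eta), s) \leq \cf(\eta)$, and the hypothesis $\int_{\dly(t)}^{\sigma(t)} \cf(\eta)\Delta\eta \leq 1$ gives
\[
\fun(\sigma(t), s) \leq \fun(\dly(t), s) + 1 < 0 + 1 = 1,
\]
contradicting $\fun(\sigma(t), s) > 1$.

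The most delicate point is the right-dense inductive step, where $P(t)$ must be extended to a right-neighborhood of $t$ in $\T$. When $|\fun(t, s)| < 1$, continuity provides the extension automatically; I expect the main obstacle to be the boundary situation $|\fun(t, s)| = 1$. There, a putative violation $|\fun(\tau, s)| > 1$ arbitrarily close to $t$ from the right, combined with $\sigma(t) = t$, forces the derivative $\fun^{\Delta}(t, s) = -\cf(t)\fun(\dly(t), s)$ to have the sign producing growth through $\pm 1$, while the integral identity over $[\dly(t), t)_{\T}$ together with $\int_{\dly(t)}^{\sigma(t)} \cf \leq 1$ pins the value of $\fun(\dly(t), s)$. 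I expect the contradiction to come from combining this sign constraint with the rd-continuity of $\cf$ and $\dly$ at the right-dense $t$, along the same lines as the right-scattered argument.
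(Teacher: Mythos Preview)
Your right-scattered step is correct and is essentially the paper's argument for that case. The genuine gap is the right-dense step, which you explicitly leave as an expectation rather than a proof, and your sketched line does not close it. With $x:=\fun(\cdot,s)$, suppose $t$ is right-dense, $x(t)=1$, and there are points $\tau_{n}\searrow t$ with $x(\tau_{n})>1$. Since $x$ is $\Delta$-differentiable at the right-dense point $t$, this yields only $x^{\Delta}(t)\geq 0$, not the strict sign you need; the case $x^{\Delta}(t)=0$ (for instance $\cf(t)=0$, or $x(\dly(t))=0$) is not ruled out. In that degenerate case the integral identity over $[\dly(t),t]_{\T}$ gives at best $x(\dly(t))\geq 0$, hence (when $\cf(t)>0$) $x(\dly(t))=0$, which is perfectly consistent and produces no contradiction. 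You are then left needing a right-neighborhood where $|x|\leq 1$, and local analysis at $t$ alone cannot supply it; the induction principle stalls.

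The paper circumvents this by not arguing at the boundary point. It sets $t_{2}:=\sup\{t:\ |x|\leq 1\ \text{on}\ [s,t]_{\T}\}$, and in the right-dense case first advances to some $t_{3}>t_{2}$ with $|x(t_{3})|>1$ and $x$ of fixed sign on $[t_{2},t_{3}]_{\T}$, then selects $t_{5}\in[t_{2},t_{3}]_{\T}$ at which \emph{both} $x^{\sigma}(t_{5})>1$ and $x^{\Delta}(t_{5})>0$ hold. Working at this later point, the strictly positive derivative forces $x(\dly(t_{5}))<0$, which in turn forces $\dly(t_{5})\leq t_{4}$, the last generalized zero of $x$ in $[s,t_{2})_{\T}$. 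Integrating \eqref{introeq1} from $t_{4}$ to $\sigma(t_{5})$ then yields $x^{\sigma}(t_{5})\leq\int_{\dly(t_{5})}^{\sigma(t_{5})}\cf(\eta)\Delta\eta\leq 1$, the desired contradiction. Moving past the boundary is precisely what lets one secure a \emph{strictly} positive derivative and avoid the degenerate case that blocks your local argument.
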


\begin{proof}
Fix $s\in[t_{0},\infty)_{\T}$ and denote $x(t):=\fun(t,s)$ for $t\in[s,\infty)_{\T}$.
First, let $x$ be positive on $[s,\infty)_{\T}$,
then $x$ is nonincreasing on $[s,\infty)_{\T}$,
which implies $0<x(t)\leq1$ for all $t\in[s,\infty)_{\T}$.
Thus for a positive $x$ the claim is true.
Next, let $x$ have some generalized zeros on $[s,\infty)_{\T}$, i.e.,
there exists $t_{1}\in[s,\infty)_{\T}$ such that either $x(t_{1})=0$ or $x(t_{1})>0$ and $x^{\sigma}(t_{1})<0$.
Hence, $0\leq{}x(t)\leq1$ for all $t\in[s,t_{1}]_{\T}$.
Let
\begin{equation}
t_{2}:=\sup\{t\in[s,\infty)_{\T}:\ |x(\eta)|\leq1\ \text{for all}\ \eta\in[s,t]_{\T}\}.\notag
\end{equation}
Clearly, $t_{2}\geq{}t_{1}$.
To prove $t_{2}=\infty$, assume the contrary that $t_{2}$ is finite.
Assume for now that $t_{2}$ is right-scattered, i.e., $\mu(t_{2})>0$.
Then, we have $|x^{\sigma}(t_{2})|>1$ and $|x(t)|\leq1$ for all $t\in[s,t_{2}]_{\T}$.
Without loss of generality, let $x^{\sigma}(t_{2})>1$.
The case where $x^{\sigma}(t_{2})<1$ is treated similarly.
Thus, $x^{\Delta}(t_{2})=\frac{x^{\sigma}(t_{2})-x(t_{2})}{\mu(t_{2})}>0$.
From \eqref{introeq1}, we have $x(\dly(t_{2}))<0$.
Integrating \eqref{introeq1} from $\alpha(t_{2})$ to $\sigma(t_{2})$, we get
\begin{equation}
x^{\sigma}(t_{2})
=x(\dly(t_{2}))-\int_{\dly(t_{2})}^{\sigma(t_{2})}\cf(\eta)x(\dly(\eta))\Delta\eta
<\int_{\dly(t_{2})}^{\sigma(t_{2})}\cf(\eta)\Delta\eta\leq1,\notag
\end{equation}
which contradicts $x^{\sigma}(t_{2})>1$
(note that $\eta\in[\dly(t_{2}),\sigma(t_{2}))_{\T}$ implies $\dly(\eta)\leq t_{2}$).
This shows that $t_{2}$ is right-dense.
That is, $x$ is continuous at $t_{2}$.
In this case, $|x(t_{2})|=1$.
Hence, we can find $t_{3}\in(t_{2},\infty)_{\T}$
such that $|x(t_{3})|>1$ and $x$ is of fixed sign on $[t_{2},t_{3}]_{\T}$.
Without loss of generality, assume that $x(t_{2})=1$ and $x(t)>0$ for all $t\in[t_{2},t_{3}]_{\T}$
(the case where $x(t_{2})=-1$ and $x(t)<0$ for all $t\in[t_{2},t_{3}]_{\T}$ is treated similarly).
Let $t_{4}$ be the greatest generalized zero of $x$ on $[s,t_{2})_{\T}$.
Hence, we have either $x(t_{4})=0$ or $x(t_{4})<0$ and $x^{\sigma}(t_{4})>0$.
Further, $x(t)>0$ for all $t\in(t_{4},t_{3}]_{\T}$.
We can also find $t_{5}\in[t_{2},t_{3}]_{\T}$ such that $x^{\Delta}(t_{5})>0$ and $x^{\sigma}(t_{5})>1$.
This implies $x(\dly(t_{5}))<0$ by \eqref{introeq1}.
If $t_{4}<\dly(t_{5})\leq{}t_{5}$,
then $x(\dly(t_{5}))>0$,
which is contradiction.
Thus, $\dly(t_{5})\leq{}t_{4}$.
Further, we have $|x(\dly(t))|\leq1$ for all $t\in[t_{4},t_{5}]_{\T}$.
So, integrating \eqref{introeq1} from $t_{4}$ to $\sigma(t_{5})$ yields
\begin{equation}
x^{\sigma}(t_{5})=x(t_{4})-\int_{t_{4}}^{\sigma(t_{5})}\cf(\eta)x(\dly(\eta))\Delta\eta
\leq\int_{t_{4}}^{\sigma(t_{5})}\cf(\eta)\Delta\eta
\leq\int_{\dly(t_{5})}^{\sigma(t_{5})}\cf(\eta)\Delta\eta\leq1,\notag
\end{equation}
which contradicts $x^{\sigma}(t_{5})>1$.
This implies $t_{2}=\infty$ and completes the proof.
\end{proof}

\begin{lemma}\label{spfsa1lm2}
Assume \ref{a1}, \eqref{daleq1} and \eqref{stla1lm1eq1}.
Then,
\begin{equation}
|\fun(t,s)|\leq{}M_{0}\mathrm{e}_{\ominus(\lambda_{0}\cf)}(t,s)\quad\text{for all}\ (t,s)\in\Lambda_{t_{0}},\notag
\end{equation}
where $\Lambda_{t_{0}}$ is defined in \eqref{dalthm1it2eq2}, $M_{0}\in\R^{+}$ and $\lambda_{0}\in(0,1)_{\R}$ is the number provided by Lemma~\ref{stla1lm1}.
\end{lemma}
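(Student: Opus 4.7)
My plan is to reduce the estimate $|\fun(t,s)|\leq M_{0}\mathrm{e}_{\ominus(\lambda_{0}\cf)}(t,s)$ to a uniform bound $|y(t)|\leq M_{0}$ for the weighted function $y(t):=\fun(t,s)\mathrm{e}_{\lambda_{0}\cf}(t,s)$, and then replay the extremality argument of Lemma~\ref{spfsa1lm1} in this weighted setting, with Lemma~\ref{stla1lm1} in the role of the crude hypothesis \eqref{spfsa1lm1eq1}. Since $\dly_{\ast}\leq\dly$, hypothesis \eqref{stla1lm1eq1} already entails \eqref{spfsa1lm1eq1}, so Lemma~\ref{spfsa1lm1} furnishes $|\fun(t,s)|\leq1$ throughout $\Lambda_{t_{0}}$. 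Let $\lambda_{0}\in(0,1)_{\R}$ be as in Lemma~\ref{stla1lm1}; multiplying its inequality by $1+\mu(t)\lambda_{0}\cf(t)$ and absorbing the factor via $(1+\mu(t)\lambda_{0}\cf(t))\mathrm{e}_{\lambda_{0}\cf}(t,\cdot)=\mathrm{e}_{\lambda_{0}\cf}(\sigma(t),\cdot)$ yields the working form
\begin{equation*}
\int_{\dly_{\ast}(t)}^{\sigma(t)}\mathrm{e}_{\lambda_{0}\cf}(\sigma(t),\dly_{\ast}(\eta))\cf(\eta)\Delta\eta<1-\lambda_{0},\qquad t\in[t_{0},\infty)_{\T}.
\end{equation*}

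I then select $M_{0}\geq1$ to dominate $|y|$ on the initial block $[\dly_{\ast}(s),\dly_{-1}(s)]_{\T}$ uniformly in $s$. From $|\fun|\leq1$ and $\mathrm{e}_{\lambda_{0}\cf}(\dly_{-1}(s),s)\leq\mathrm{e}^{\lambda_{0}K_{0}}$ (by \cite[Lemma~3.2]{MR2842561}, where $K_{0}:=\sup_{s}\int_{s}^{\dly_{-1}(s)}\cf(\eta)\Delta\eta<\infty$ by \eqref{daleq1}), one may take $M_{0}:=\mathrm{e}^{\lambda_{0}K_{0}}$. Define $t_{2}:=\sup\{t\in[s,\infty)_{\T}:|y(\eta)|\leq M_{0}\text{ for all }\eta\in[s,t]_{\T}\}$ and suppose toward contradiction that $t_{2}<\infty$; this choice forces $t_{2}\geq\dly_{-1}(s)$, so $\dly_{\ast}(t_{2})\geq s$ and \eqref{introeq1} is available on $[\dly_{\ast}(t_{2}),\infty)_{\T}$.

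Following the right-scattered/right-dense split of Lemma~\ref{spfsa1lm1}, I locate $t_{5}$ (equal to $t_{2}$ or slightly past) with $|y^{\sigma}(t_{5})|>M_{0}$; WLOG $y^{\sigma}(t_{5})>M_{0}$, so $\fun^{\sigma}(t_{5},s)>0$. In the degenerate case where $\fun(\cdot,s)>0$ throughout $[s,t_{5}]_{\T}$, the dynamic equation renders $\fun$ nonincreasing (hence $\fun(\dly)\geq\fun$), and a direct product-rule computation gives $y^{\Delta}=\mathrm{e}_{\lambda_{0}\cf}\cf[\lambda_{0}\fun-(1+\mu\lambda_{0}\cf)\fun(\dly)]\leq-(1-\lambda_{0})\mathrm{e}_{\lambda_{0}\cf}\cf\fun\leq0$, so $y^{\sigma}(t_{5})\leq y(s)=1\leq M_{0}$, a contradiction. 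Otherwise, let $t_{4}$ be the greatest generalized zero of $\fun(\cdot,s)$ in $[s,t_{5}]_{\T}$; the geometric argument of Lemma~\ref{spfsa1lm1} (based on $\fun^{\sigma}(t_{5},s)>0$ and \eqref{introeq1}) supplies $\fun(t_{4},s)\leq0$, $\fun(\cdot,s)>0$ on $(t_{4},t_{5}]_{\T}$, and $\dly(t_{5})\leq t_{4}$.

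Integrating \eqref{introeq1} from $t_{4}$ to $\sigma(t_{5})$, dropping the nonpositive boundary term $\fun(t_{4},s)$, discarding the portion of the integrand on $\{\eta:\dly(\eta)>t_{4}\}$ (where $\fun(\dly(\eta),s)>0$ contributes negatively and only improves the bound), applying the inductive bound $|\fun(\dly(\eta),s)|\leq M_{0}\mathrm{e}_{\ominus\lambda_{0}\cf}(\dly_{\ast}(\eta),s)$ on the remainder (valid since there $\dly(\eta)\leq t_{4}\leq t_{2}$), and multiplying through by $\mathrm{e}_{\lambda_{0}\cf}(\sigma(t_{5}),s)$ yields
\begin{equation*}
|y^{\sigma}(t_{5})|\leq M_{0}\int_{\dly_{\ast}(t_{5})}^{\sigma(t_{5})}\mathrm{e}_{\lambda_{0}\cf}(\sigma(t_{5}),\dly_{\ast}(\eta))\cf(\eta)\Delta\eta<M_{0}(1-\lambda_{0})<M_{0}
\end{equation*}
by the working form of Lemma~\ref{stla1lm1} recorded above, contradicting $y^{\sigma}(t_{5})>M_{0}$. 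The principal obstacle throughout is the sign-tracking step: $y^{\Delta}$ does not transparently carry the sign of $\fun^{\Delta}$, so all sign analysis must be performed on $\fun$ (which shares its sign pattern with $y$ because $\mathrm{e}_{\lambda_{0}\cf}>0$), and the ``no generalized zero'' degenerate case must be separated out and handled via the monotonicity computation for $y$ displayed above.
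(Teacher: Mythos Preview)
Your approach is genuinely different from the paper's, and the idea of running the extremality argument of Lemma~\ref{spfsa1lm1} for the weighted function $y=\fun\,\mathrm{e}_{\lambda_{0}\cf}$ is attractive. However, there is a real gap at the step where you invoke ``the geometric argument of Lemma~\ref{spfsa1lm1}'' to obtain $\dly(t_{5})\leq t_{4}$.

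In Lemma~\ref{spfsa1lm1} that inequality comes from $\fun^{\Delta}(t_{5})>0$, which there follows from $|\fun^{\sigma}(t_{5})|>1\geq|\fun(t_{5})|$. In your weighted setting you only have $|y^{\sigma}(t_{5})|>M_{0}\geq|y(t_{5})|$, hence $y^{\Delta}(t_{5})>0$; but by your own product-rule computation this gives only $\lambda_{0}\fun^{\sigma}(t_{5})>\fun(\dly(t_{5}))$, not $\fun(\dly(t_{5}))<0$. The exponential weight can push $y$ upward while $\fun$ is flat or decreasing, so nothing forces $\fun(\dly(t_{5}))<0$, and without that you cannot conclude $\dly(t_{5})\leq t_{4}$ (equivalently $\dly_{\ast}(t_{5})\leq t_{4}$). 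Your final display needs precisely $\dly_{\ast}(t_{5})\leq t_{4}$ to enlarge the integration domain from $[t_{4},\sigma(t_{5}))$ to $[\dly_{\ast}(t_{5}),\sigma(t_{5}))$ and then invoke the working form of Lemma~\ref{stla1lm1}; if $t_{4}<\dly_{\ast}(t_{5})$, the enlargement goes the wrong way. There is also a smaller slip in the degenerate case: your inequality $y^{\sigma}(t_{5})\leq y(s)=1$ requires $y^{\Delta}\leq0$ on all of $[s,t_{5}]$, but for $t$ with $\dly(t)<s$ one has $\fun(\dly(t),s)=0$ and then $y^{\Delta}(t)=\lambda_{0}\cf(t)\mathrm{e}_{\lambda_{0}\cf}(t,s)\fun^{\sigma}(t)\geq0$; the monotonicity is valid only from $\dly_{-1}(s)$ onward, which still yields $y^{\sigma}(t_{5})\leq M_{0}$ but not $\leq1$.

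For comparison, the paper avoids the sign-location issue entirely: it first rewrites \eqref{introeq1} as $x^{\Delta}=-\cf x^{\sigma}-\cf\int_{\dly}^{\sigma}\cf\,x(\dly)\,\Delta$, applies the variation-of-parameters formula from $\dly_{-1}(s)$, and then multiplies by $\mathrm{e}_{\lambda_{0}\cf}$. This produces an integral representation for $y$ in which the inner integral is exactly $\int_{\dly(\eta)}^{\sigma(\eta)}\mathrm{e}_{\lambda_{0}\cf}(\eta,\dly(\zeta))\cf(\zeta)y(\dly(\zeta))\Delta\zeta$, so Lemma~\ref{stla1lm1} can be applied at \emph{each} $\eta$ (not just at $t_{5}$), and the outer kernel $\mathrm{e}_{(\lambda_{0}\cf)\ominus\cf}$ integrates to $1$ by a telescoping identity. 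No information about zeros of $\fun$ is needed.
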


\begin{proof}
For simplicity of notation, fix $s\in[t_{0},\infty)_{\T}$ and let $x(t):=\fun(t,s)$ for $t\in[s,\infty)_{\T}$.
From \eqref{stla1lm1eq1}, the claim of Lemma~\ref{stla1lm1} holds with some $\lambda_{0}\in(0,1)_{\R}$ on $[s,\infty)_{\T}$.
Using \eqref{introeq1}, we have
\begin{align}
x^{\Delta}(t)=&-\cf(t)x^{\sigma}(t)
+\cf(t)\int_{\dly(t)}^{\sigma(t)}x^{\Delta}(\eta)\Delta\eta\notag\\
=&-\cf(t)x^{\sigma}(t)
-\cf(t)\int_{\dly(t)}^{\sigma(t)}\cf(\eta)x(\dly(\eta))\Delta\eta\label{spfsa1lm2prfeq1}
\end{align}
for all $t\in[\dly_{-1}(s),\infty)_{\T}$.
Applying the solution representation formula in Lemma~\ref{dallm1}
for \eqref{spfsa1lm2prfeq1},
we get
\begin{equation}
\begin{aligned}[]
x(t)=&x(\dly_{-1}(s))\mathrm{e}_{\ominus\cf}(t,\dly_{-1}(s))
-\int_{\dly_{-1}(s)}^{t}\mathrm{e}_{\ominus\cf}(t,\eta)\cf(\eta)\\
&\phantom{x(\dly_{-1}(s))\mathrm{e}_{\ominus\cf}(t,\dly_{-1}(s))-\int_{\dly_{-1}(s)}^{t}}
\times\int_{\dly(\eta)}^{\sigma(\eta)}\cf(\zeta)x(\dly(\zeta))\Delta\zeta\Delta\eta
\end{aligned}\label{spfsa1lm2prfeq2}
\end{equation}
for all $t\in[\dly_{-1}(s),\infty)_{\T}$.
Let us define a function $y\in\crd{1}([s,\infty)_{\T},\R)$ by
\begin{equation}
y(t):=
\begin{cases}
x(t)\mathrm{e}_{\lambda_{0}\cf}(t,\dly_{-1}(s)),&t\in[\dly_{-1}(s),\infty)_{\T},\\
x(t),&t\in[s,\dly_{-1}(s)]_{\T}.
\end{cases}\label{spfsa1lm2prfeq3}
\end{equation}
Multiplying \eqref{spfsa1lm2prfeq2} by $\mathrm{e}_{\lambda_{0}\cf}(\cdot,\dly_{-1}(s))$, we get
\begin{equation}
\begin{aligned}[]
y(t)=&y(\dly_{-1}(s))\mathrm{e}_{(\lambda_{0}\cf)\ominus\cf}(t,\dly_{-1}(s))\\
&-\int_{\dly_{-1}(s)}^{t}\mathrm{e}_{(\lambda_{0}\cf)\ominus\cf}(t,\eta)\cf(\eta)
\int_{\dly(\eta)}^{\sigma(\eta)}\mathrm{e}_{\lambda_{0}\cf}(\eta,\dly(\zeta))\cf(\zeta)y(\dly(\zeta))\Delta\zeta\Delta\eta
\end{aligned}\label{spfsa1lm2prfeq4}
\end{equation}
for all $t\in[\dly_{-1}(s),\infty)_{\T}$.

Next, we claim that $|y(t)|\leq1$ for all $t\in[s,\infty)_{\T}$.
Let $$t_{2}:=\sup\{t\in[s,\infty)_{\T}:\ |y(\eta)|\leq1\ \text{for all}\ \eta\in[s,t]_{\T}\}.$$
To prove $t_{2}=\infty$,
assume the contrary that $t_{2}$ is finite.
Clearly, $t_{2}>\dly_{-1}(s)$ by \eqref{spfsa1lm2prfeq3} and Lemma~\ref{spfsa1lm1}.
Assume for now that $t_{2}$ is right-scattered.
Then, we have $|y^{\sigma}(t_{2})|>1$ and $|y(t)|\leq1$ for all $t\in[s,t_{2}]_{\T}$.
Using \eqref{spfsa1lm2prfeq4} and Lemma~\ref{stla1lm1}, we obtain
\begin{align}
\begin{split}
|y^{\sigma}(t_{2})|\leq&\mathrm{e}_{(\lambda_{0}\cf)\ominus\cf}(\sigma(t_{2}),\dly_{-1}(s))
+\int_{\dly_{-1}(s)}^{\sigma(t_{2})}\mathrm{e}_{(\lambda_{0}\cf)\ominus\cf}(\sigma(t_{2}),\eta)\cf(\eta)\\
&\phantom{\mathrm{e}_{(\lambda_{0}\cf)\ominus\cf}(\sigma(t_{2}),\dly_{-1}(s))+\int_{\dly_{-1}(s)}^{\sigma(t_{2})}}
\times\int_{\dly_{\ast}(\eta)}^{\sigma(\eta)}\mathrm{e}_{\lambda_{0}\cf}(\eta,\dly_{\ast}(\zeta))\cf(\zeta)\Delta\zeta\Delta\eta
\end{split}\notag\\
<&\mathrm{e}_{(\lambda_{0}\cf)\ominus\cf}(\sigma(t_{2}),\dly_{-1}(s))
+\int_{\dly_{-1}(s)}^{\sigma(t_{2})}\mathrm{e}_{(\lambda_{0}\cf)\ominus\cf}(\sigma(t_{2}),\eta)\frac{(1-\lambda_{0})\cf(\eta)}{1+\lambda_{0}\cf(\eta)\mu(\eta)}\Delta\eta\notag\\
=&\mathrm{e}_{(\lambda_{0}\cf)\ominus\cf}(\sigma(t_{2}),\dly_{-1}(s))
+\int_{\dly_{-1}(s)}^{\sigma(t_{2})}\mathrm{e}_{(\lambda_{0}\cf)\ominus\cf}(\sigma(t_{2}),\eta)\big(\cf\ominus{}(\lambda_{0}\cf)\big)(\eta)\Delta\eta\notag\\
=&\mathrm{e}_{(\lambda_{0}\cf)\ominus\cf}(\sigma(t_{2}),\dly_{-1}(s))
+\int_{\dly_{-1}(s)}^{\sigma(t_{2})}\frac{\Delta}{\Delta\eta}\mathrm{e}_{(\lambda_{0}\cf)\ominus\cf}(\sigma(t_{2}),\eta)\Delta\eta=1,\notag
\end{align}
which is a contradiction.
This shows that $t_{2}$ is right-dense.
That is, $y$ is continuous at $t_{2}$.
In this case, $|y(t_{2})|=1$.
Using Lemma~\ref{stla1lm1} and \eqref{spfsa1lm2prfeq4},
we can proceed as above and show that $|y(t_{2})|<1$,
which is also a contradiction.
Thus, $t_{2}=\infty$, i.e.,
$|y(t)|\leq1$ for all $t\in[s,\infty)_{\T}$.

It follows from \eqref{spfsa1lm2prfeq3} that
\begin{align}
|x(t)|\leq&\mathrm{e}_{\ominus(\lambda_{0}\cf)}(t,\dly_{-1}(s))|y(t)|
\leq\mathrm{e}_{\ominus(\lambda_{0}\cf)}(t,\dly_{-1}(s))\notag\\
=&\mathrm{e}_{\lambda_{0}\cf}(\dly_{-1}(s),s)\mathrm{e}_{\ominus(\lambda_{0}\cf)}(t,s)\label{spfsa1lm2prfeq6}
\end{align}
for all $t\in[\dly_{-1}(s),\infty)_{\T}$.
By \cite[Lemma~3.2]{MR2842561} and \eqref{daleq1}, we estimate
\begin{equation}
\mathrm{e}_{\lambda_{0}\cf}(\dly_{-1}(s),s)
\leq\exp\bigg\{\lambda_{0}\int_{s}^{\dly_{-1}(s)}\cf(\eta)\Delta\eta\bigg\}\leq\mathrm{e}^{\lambda_{0}K_{0}}=:M_{0},
\notag
\end{equation}
where $K_{0}:=\sup_{s\in[t_{0},\infty)_{\T}}\negmedspace\big\{\negmedspace\int_{s}^{\dly_{-1}(s)}\cf(\eta)\Delta\eta\big\}$.
Hence, by \eqref{spfsa1lm2prfeq6}, $M_{0}>1$ and the fact that $|x(t)|\leq1$ for all $t\in[s,\dly_{-1}(s)]_{\T}$,
we have $|x(t)|\leq{}M_{0}\mathrm{e}_{\ominus(\lambda_{0}\cf)}(t,s)$ for all $t\in[s,\infty)_{\T}$,
which completes the proof.
\end{proof}

\subsection{Uniform Stability}\label{usa1}

\begin{theorem}\label{usa1thm1}
Assume \ref{a1}, \eqref{daleq1} and \eqref{spfsa1lm1eq1}.
Then, the trivial solution of \eqref{introeq1} is uniformly stable.
\end{theorem}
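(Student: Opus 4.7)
The statement is set up to fall out directly from the two ingredients already assembled, so my plan is essentially to chain them together rather than to re-prove anything from scratch.

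First, I would invoke Lemma~\ref{spfsa1lm1}. Hypotheses \ref{a1} and \eqref{spfsa1lm1eq1} are exactly its hypotheses, so it supplies the uniform bound $|\fun(t,s)| \leq 1$ for all $(t,s)\in\Lambda_{t_{0}}$. In other words, condition \ref{dalthm1it2} of Theorem~\ref{dalthm1} is fulfilled with $M_{0} := 1$.

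Second, I would apply the implication \ref{dalthm1it2}$\Rightarrow$\ref{dalthm1it1} of Theorem~\ref{dalthm1}. That implication needs the auxiliary hypothesis \eqref{daleq1}, which is present in the statement being proved, so it is available to us. Running Theorem~\ref{dalthm1} in this direction converts the boundedness of the fundamental solution into uniform stability of the trivial solution of \eqref{introeq1}, which is the desired conclusion. For completeness, one can re-derive the estimate directly from the representation in Lemma~\ref{dallm1}: bounding $|\fun(t,\sigma(\eta))|$ by $1$, truncating the integral at $\dly_{-1}(s)$ (as beyond this point the initial function has no contribution), and using the constant $K_{0} := \sup_{s\in[t_{0},\infty)_{\T}}\int_{s}^{\dly_{-1}(s)}\cf(\eta)\Delta\eta < \infty$ from \eqref{daleq1}, one obtains
\begin{equation*}
|x(t)| \leq (K_{0}+1)\Bigl(|\initval| + \sup_{\eta\in[\dly_{\ast}(s),s)_{\T}}|\initfun(\eta)|\Bigr)
\end{equation*}
for all $t\in[s,\infty)_{\T}$, from which the $(\varepsilon,\delta)$-definition of uniform stability follows with $\delta := \varepsilon/(K_{0}+1)$.

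Since both steps are just applications of previously proved results, there is no substantive obstacle in the proof; the only thing to check is that the hypothesis \eqref{spfsa1lm1eq1} of this theorem matches the hypothesis of Lemma~\ref{spfsa1lm1} (it does, verbatim) and that \eqref{daleq1} is what Theorem~\ref{dalthm1} requires for the backward implication. Thus the entire proof amounts to a one-line appeal to Lemma~\ref{spfsa1lm1} followed by a one-line appeal to Theorem~\ref{dalthm1}.
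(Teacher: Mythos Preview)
Your proposal is correct and matches the paper's own proof exactly: the paper simply states that the result follows from Theorem~\ref{dalthm1} and Lemma~\ref{spfsa1lm1}, which is precisely the two-step chain you outline. Your additional ``for completeness'' estimate merely reproduces the argument already given inside the proof of Theorem~\ref{dalthm1}\ref{dalthm1it2}$\Rightarrow$\ref{dalthm1it1}, so it is redundant but harmless.
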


\begin{proof}
The proof follows from Theorem~\ref{dalthm1} and Lemma~\ref{spfsa1lm1}.
\end{proof}

\subsection{Global Asymptotic Stability}\label{gasa1}

In this section, we suppose that
\begin{equation}
\int_{t_{0}}^{\infty}\cf(\eta)\Delta\eta=\infty.\label{gasa1eq1}
\end{equation}

\begin{theorem}\label{gasa1thm1}
Assume \ref{a1}, \eqref{daleq1}, \eqref{stla1lm1eq1} and \eqref{gasa1eq1}.
Then, the trivial solution of \eqref{introeq1} is globally asymptotically stable.
\end{theorem}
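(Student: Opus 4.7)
The plan is to verify both ingredients of Definition~\ref{dfngas}. Uniform stability is immediate from Theorem~\ref{usa1thm1}, whose hypotheses \ref{a1}, \eqref{daleq1} and \eqref{stla1lm1eq1} coincide with those assumed here. What remains is global attractivity, which by Theorem~\ref{dalthm2} (applicable because \eqref{daleq1} is in force) reduces to proving $\lim_{t\to\infty}\fun(t,s)=0$ for every $s\in[t_{0},\infty)_{\T}$.

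Lemma~\ref{spfsa1lm2} already supplies the estimate $|\fun(t,s)|\leq{}M_{0}\mathrm{e}_{\ominus(\lambda_{0}\cf)}(t,s)$ with $\lambda_{0}\in(0,1)_{\R}$ produced by Lemma~\ref{stla1lm1}, so it suffices to show that $\mathrm{e}_{\ominus(\lambda_{0}\cf)}(t,s)\to 0$, i.e., $\mathrm{e}_{\lambda_{0}\cf}(t,s)\to\infty$, as $t\to\infty$. To obtain this I would invoke the uniform bound $\cf(t)\mu(t)<\nu_{0}<1$ derived in the proof of Lemma~\ref{stla1lm1} from the standing hypothesis \eqref{stla1lm1eq1}. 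That bound yields $1+\lambda_{0}\cf(\eta)\mu(\eta)\leq 1+\lambda_{0}\nu_{0}$ on $[t_{0},\infty)_{\T}$, and a routine lower estimate for the time-scale exponential (stemming from $\log(1+x)\geq x/(1+x)$ applied to the Hilger cylinder transform) then gives
\begin{equation}
\mathrm{e}_{\lambda_{0}\cf}(t,s)\geq \exp\bigg\{\frac{\lambda_{0}}{1+\lambda_{0}\nu_{0}}\int_{s}^{t}\cf(\eta)\Delta\eta\bigg\}\quad\text{for all}\ t\in[s,\infty)_{\T}.\notag
\end{equation}
By hypothesis \eqref{gasa1eq1}, the right-hand side diverges to $\infty$, so $\mathrm{e}_{\ominus(\lambda_{0}\cf)}(t,s)\to 0$, hence $\fun(t,s)\to 0$, and Theorem~\ref{dalthm2} delivers global attractivity. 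Combined with uniform stability from Theorem~\ref{usa1thm1}, Definition~\ref{dfngas} is satisfied.

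The main obstacle I anticipate is obtaining the lower bound on $\mathrm{e}_{\lambda_{0}\cf}(t,s)$ cleanly: the upper estimate from \cite[Lemma~3.2]{MR2842561} that was used in Lemma~\ref{spfsa1lm2} does not suffice here, and one must exploit the uniform smallness of $\cf\mu$ in the Hilger cylinder transform rather than merely bound $\log(1+x)$ by $x$. Once that inequality is established, the rest of the argument is entirely mechanical and consists only of invoking already-proven lemmas and theorems.
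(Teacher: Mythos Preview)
Your proposal is correct and follows essentially the paper's strategy: bound the fundamental solution via Lemma~\ref{spfsa1lm2} and conclude global attractivity from Theorem~\ref{dalthm2}, with uniform stability handled separately. Two small remarks are in order.

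First, a minor slip: the hypotheses of Theorem~\ref{usa1thm1} are \ref{a1}, \eqref{daleq1} and \eqref{spfsa1lm1eq1}, not \eqref{stla1lm1eq1}. This is harmless, since \eqref{stla1lm1eq1} clearly implies \eqref{spfsa1lm1eq1} (as $\dly_{\ast}(t)\leq\dly(t)$), so uniform stability still follows; but the hypotheses do not literally ``coincide''.

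Second, the obstacle you flag---obtaining a lower bound on $\mathrm{e}_{\lambda_{0}\cf}(t,s)$ via the cylinder transform and the uniform bound $\cf\mu<\nu_{0}$---is real but unnecessary. The paper simply invokes Corollary~\ref{alcrl1}\,\ref{alcrl1it1}$\Leftrightarrow$\ref{alcrl1it5} from Appendix~B (which in turn rests on Lemma~\ref{allm4}): since $\int_{t_{0}}^{\infty}\lambda_{0}\cf(\eta)\Delta\eta=\infty$ by \eqref{gasa1eq1}, one gets $\mathrm{e}_{\lambda_{0}\cf}(t,s)\to\infty$ and hence $\mathrm{e}_{\ominus(\lambda_{0}\cf)}(t,s)\to0$ immediately, with no need for the quantitative estimate or the bound on $\cf\mu$. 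Your explicit inequality is correct and yields a sharper quantitative statement, but the paper's route is shorter and uses machinery already in place.
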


\begin{proof}
It follows from \eqref{gasa1eq1} together with \ref{alcrl1it1} and \ref{alcrl1it5} of Corollary~\ref{alcrl1} given in \hyperref[appb]{Appendix~B} that $\lim_{t\to\infty}\mathrm{e}_{\ominus(\lambda\cf)}(t,s)=0$ for any $s\in[t_{0},\infty)_{\T}$ and any $\lambda\in(0,1)_{\R}$.
Thus, the proof follows from Theorem~\ref{dalthm2} and Lemma~\ref{spfsa1lm2}.
\end{proof}

\subsection{Uniform Exponential Stability}\label{uesa1}

\begin{theorem}\label{uesa1thm1}
Assume \ref{a1}, \eqref{daleq1}, \eqref{daleq2} and \eqref{stla1lm1eq1}.
Moreover, assume that there exist $M_{1},\lambda_{1}\in\R^{+}$ such that
\begin{equation}
\mathrm{e}_{\ominus(\lambda_{0}\cf)}(t,s)\leq{}M_{1}\mathrm{e}_{\ominus\lambda_{1}}(t,s)\quad\text{for all}\
(t,s)\in\Lambda_{t_{0}},\notag
\end{equation}
where $\Lambda_{t_{0}}$ is defined in \eqref{dalthm1it2eq2} and $\lambda_{0}\in(0,1)_{\R}$ is provided in 
Lemma~\ref{stla1lm1}.
Then, the trivial solution of \eqref{introeq1} is uniformly exponentially stable.
\end{theorem}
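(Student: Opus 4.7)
The proof will be a short chaining of previously established results; there is essentially no new analytic content beyond what Lemma~\ref{spfsa1lm2} and Theorem~\ref{dalthm3} already provide, so the plan is purely to verify the hypothesis of Theorem~\ref{dalthm3}\ref{dalthm3it2} and then invoke the equivalence.

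First, I would apply Lemma~\ref{spfsa1lm2}: under \ref{a1}, \eqref{daleq1} and \eqref{stla1lm1eq1}, it produces a constant $M_{0}\in\R^{+}$ and the number $\lambda_{0}\in(0,1)_{\R}$ from Lemma~\ref{stla1lm1} such that
\begin{equation}
|\fun(t,s)|\leq{}M_{0}\,\mathrm{e}_{\ominus(\lambda_{0}\cf)}(t,s)\quad\text{for all}\ (t,s)\in\Lambda_{t_{0}}.\notag
\end{equation}
Next, I would insert the extra hypothesis comparing $\mathrm{e}_{\ominus(\lambda_{0}\cf)}(t,s)$ with an ordinary time scales exponential: since $\mathrm{e}_{\ominus(\lambda_{0}\cf)}(t,s)\leq{}M_{1}\,\mathrm{e}_{\ominus\lambda_{1}}(t,s)$ on $\Lambda_{t_{0}}$, the two estimates combine to yield
\begin{equation}
|\fun(t,s)|\leq{}M_{0}M_{1}\,\mathrm{e}_{\ominus\lambda_{1}}(t,s)\quad\text{for all}\ (t,s)\in\Lambda_{t_{0}}.\notag
\end{equation}
This is precisely condition \ref{dalthm3it2} of Theorem~\ref{dalthm3}, with the constant $M_{0}M_{1}$ in place of $M_{0}$ and decay rate $\lambda_{1}$ in place of $\lambda_{0}$.

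Finally, since assumptions \eqref{daleq1} and \eqref{daleq2} are in force, the implication \ref{dalthm3it2}$\Rightarrow$\ref{dalthm3it1} of Theorem~\ref{dalthm3} immediately delivers uniform exponential stability of the trivial solution of \eqref{introeq1}. The only step that required real work was Lemma~\ref{spfsa1lm2}, which has already been proved; consequently there is no genuine obstacle here, and the proof reduces to a one-paragraph citation argument. The only subtlety worth flagging is that the extra hypothesis is nontrivial in general (the exponential $\mathrm{e}_{\ominus(\lambda_{0}\cf)}$ depends on the coefficient $\cf$, and bounding it by a classical exponential $\mathrm{e}_{\ominus\lambda_{1}}$ with rate independent of $s$ is what transforms a coefficient-dependent decay into a genuine uniform exponential decay); verifying this hypothesis will be where the interesting work lies in applications, but it is assumed outright in the statement.
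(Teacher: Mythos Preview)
Your proposal is correct and matches the paper's own proof, which simply cites Theorem~\ref{dalthm3} and Lemma~\ref{spfsa1lm2}; you have merely spelled out the chaining of the two estimates explicitly.
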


\begin{proof}
The proof follows from Theorem~\ref{dalthm3} and Lemma~\ref{spfsa1lm2}.
\end{proof}

\begin{corollary}\label{uesa1crl1}
Assume \ref{a1}, \eqref{daleq1}, \eqref{daleq2} and \eqref{stla1lm1eq1}.
Moreover, assume for every $\lambda\in(0,1)_{\R}$ that
there exist $M_{1},\lambda_{1}\in\R^{+}$ (which may depend on $\lambda$) such that
\begin{equation}
\mathrm{e}_{\ominus(\lambda\cf)}(t,s)\leq{}M_{1}\mathrm{e}_{\ominus\lambda_{1}}(t,s)\quad\text{for all}\ (t,s)\in\Lambda_{t_{0}},\notag
\end{equation}
where $\Lambda_{t_{0}}$ is defined in \eqref{dalthm1it2eq2}.
Then, the trivial solution of \eqref{introeq1} is uniformly exponentially stable.
\end{corollary}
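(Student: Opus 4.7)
The plan is to reduce Corollary~\ref{uesa1crl1} directly to Theorem~\ref{uesa1thm1}. The only difference between the two statements is that the theorem requires the exponential domination $\mathrm{e}_{\ominus(\lambda_{0}\cf)}(t,s)\leq M_{1}\mathrm{e}_{\ominus\lambda_{1}}(t,s)$ only for the specific $\lambda_{0}\in(0,1)_{\R}$ produced by Lemma~\ref{stla1lm1}, whereas the corollary assumes this bound for \emph{every} $\lambda\in(0,1)_{\R}$. So the strategy is to extract that distinguished $\lambda_{0}$ and apply the uniform hypothesis at that single point.

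In execution, I would first note that the assumption \eqref{stla1lm1eq1}, which is among the corollary's hypotheses, enables the application of Lemma~\ref{stla1lm1} and hence produces a value $\lambda_{0}\in(0,1)_{\R}$ for which the fundamental solution estimate $|\fun(t,s)|\leq M_{0}\mathrm{e}_{\ominus(\lambda_{0}\cf)}(t,s)$ on $\Lambda_{t_{0}}$ of Lemma~\ref{spfsa1lm2} holds. Specializing the corollary's hypothesis to $\lambda=\lambda_{0}$ then furnishes constants $M_{1},\lambda_{1}\in\R^{+}$ satisfying $\mathrm{e}_{\ominus(\lambda_{0}\cf)}(t,s)\leq M_{1}\mathrm{e}_{\ominus\lambda_{1}}(t,s)$ on $\Lambda_{t_{0}}$, which is precisely the remaining hypothesis of Theorem~\ref{uesa1thm1}. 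Invoking that theorem immediately yields the claimed uniform exponential stability of the trivial solution of \eqref{introeq1}.

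There is no genuine obstacle in the argument, so in lieu of a hard step I would simply remark on the purpose of the corollary: it is a cosmetic strengthening of Theorem~\ref{uesa1thm1} whose role is to spare the user from having to identify the particular $\lambda_{0}$ coming from Lemma~\ref{stla1lm1}. When one can verify an exponential-domination estimate uniformly in $\lambda\in(0,1)_{\R}$ by some ambient property of the time scale (for instance, bounded graininess combined with a lower bound on $\cf$), the corollary is then applicable without any further bookkeeping. The only minor point worth a line of verification is that the $\lambda_{0}$ supplied by Lemma~\ref{stla1lm1} indeed lies in $(0,1)_{\R}$, which is explicit in its statement, so that the quantified hypothesis of the corollary genuinely applies at this point.
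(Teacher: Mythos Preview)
Your proposal is correct and is exactly the intended argument: the paper states Corollary~\ref{uesa1crl1} without proof because it follows immediately from Theorem~\ref{uesa1thm1} by specializing the universally quantified hypothesis to the particular $\lambda_{0}\in(0,1)_{\R}$ furnished by Lemma~\ref{stla1lm1}. Your added remark explaining the corollary's purpose is accurate and a helpful gloss.
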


\section{Stability Results under {\protect\ref{a2}}}\label{srua2}

This section includes analogous results
to those in Section~\ref{srua1} under the condition \ref{a2}.
We will be relaxing the conditions \eqref{stla1lm1eq1} and \eqref{spfsa1lm1eq1} of the previous section
by replacing the condition \ref{a1} with the stronger one \ref{a2}.
We will show that the condition \ref{a2} for $\dly$ implies the same for $\dly_{\ast}$.
Indeed, under \ref{a2}, we have
\begin{equation}
\dly_{\ast}(\sigma(t))\leq\dly(\sigma(t))\leq{}t
\quad\text{for all}\ t\in[t_{0},\infty)_{\T}.\notag
\end{equation}

\subsection{A Technical Lemma}\label{stla2}

\begin{lemma}\label{stla2lm1}
Assume \ref{a2} and
\begin{equation}
\sup_{t\in[t_{0},\infty)_{\T}}\negmedspace\bigg\{\negmedspace\int_{\dly_{\ast}(t)}^{t}\cf(\eta)\Delta\eta\bigg\}<1.\label{stla2lm1eq1}
\end{equation}
Then, $-\cf\in\reg{+}([t_{0},\infty)_{\T},\R)$ and there exists $\lambda_{0}\in(0,1)_{\R}$ such that
\begin{equation}
\int_{\dly_{\ast}(t)}^{t}\mathrm{e}_{\lambda_{0}(\ominus(-\cf))}(\sigma(t),\dly_{\ast}(\eta))\cf(\eta)\Delta\eta<1-\lambda_{0}\quad\text{for all}\ t\in[t_{0},\infty)_{\T}.\notag
\end{equation}
\end{lemma}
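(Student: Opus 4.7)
The plan is to mirror the proof of Lemma~\ref{stla1lm1}, adapted to the new integration range $[\dly_{\ast}(t),t]_{\T}$ and to the exponent function $\ominus(-\cf)$ that naturally arises under \ref{a2}. First I would verify that $-\cf\in\reg{+}$. Applying \eqref{stla2lm1eq1} at $\sigma(t)$ and invoking \ref{a2} in the form $\dly_{\ast}(\sigma(t))\leq\dly(\sigma(t))\leq{}t$, the interval $[t,\sigma(t))_{\T}$ sits inside $[\dly_{\ast}(\sigma(t)),\sigma(t))_{\T}$, so $\mu(t)\cf(t)=\int_{t}^{\sigma(t)}\cf(\eta)\Delta\eta\leq\int_{\dly_{\ast}(\sigma(t))}^{\sigma(t)}\cf(\eta)\Delta\eta<1$. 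Consequently, $1-\mu(t)\cf(t)>0$ and $\ominus(-\cf)(t)=\cf(t)/(1-\mu(t)\cf(t))$ is a well-defined nonnegative regressive function, and so is $\lambda(\ominus(-\cf))$ for every $\lambda\in[0,1]_{\R}$.

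Next, choose $\nu_{0}\in(0,1)_{\R}$ strictly larger than $\sup_{t}\int_{\dly_{\ast}(t)}^{t}\cf(\eta)\Delta\eta$, so that $\int_{\dly_{\ast}(t)}^{t}\cf(\eta)\Delta\eta<\nu_{0}$ for all $t$. The same reasoning as above yields $\mu(\zeta)\cf(\zeta)<\nu_{0}$, and hence $\ominus(-\cf)(\zeta)\leq\cf(\zeta)/(1-\nu_{0})$ pointwise. Splitting $\int_{\dly_{\ast}^{2}(t)}^{\sigma(t)}=\int_{\dly_{\ast}^{2}(t)}^{\dly_{\ast}(t)}+\int_{\dly_{\ast}(t)}^{t}+\int_{t}^{\sigma(t)}$ and bounding each piece by $\nu_{0}$ gives $\int_{\dly_{\ast}^{2}(t)}^{\sigma(t)}\cf(\eta)\Delta\eta<3\nu_{0}$; invoking \cite[Lemma~3.2]{MR2842561}, I would obtain
\begin{equation*}
\mathrm{e}_{\lambda(\ominus(-\cf))}(\sigma(t),\dly_{\ast}^{2}(t))\leq\exp\!\bigg(\frac{3\lambda\nu_{0}}{1-\nu_{0}}\bigg)\quad\text{for every}\ \lambda\in[0,1]_{\R}.
\end{equation*}
Because $\dly_{\ast}$ is nondecreasing, $\eta\in[\dly_{\ast}(t),t]_{\T}$ implies $\dly_{\ast}(\eta)\geq\dly_{\ast}^{2}(t)$, so pulling this exponential out of the integral yields
\begin{equation*}
\int_{\dly_{\ast}(t)}^{t}\mathrm{e}_{\lambda(\ominus(-\cf))}(\sigma(t),\dly_{\ast}(\eta))\cf(\eta)\Delta\eta<\nu_{0}\exp\!\bigg(\frac{3\lambda\nu_{0}}{1-\nu_{0}}\bigg).
\end{equation*}

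To close, I would define $\phi\in\cnt{}([0,1]_{\R},\R)$ by $\phi(\lambda):=(1-\lambda)-\nu_{0}\exp(3\lambda\nu_{0}/(1-\nu_{0}))$. Since $\phi(0)=1-\nu_{0}>0$ and $\phi(1)=-\nu_{0}\exp(3\nu_{0}/(1-\nu_{0}))<0$, the intermediate value theorem supplies $\lambda_{0}\in(0,1)_{\R}$ with $\phi(\lambda_{0})=0$, and substituting into the previous display gives the claimed inequality. The main technical obstacle is the bookkeeping around the triple splitting of $\int_{\dly_{\ast}^{2}(t)}^{\sigma(t)}\cf\Delta\eta$, which needs $\dly_{\ast}(t)\in[t_{0},\infty)_{\T}$ so that the hypothesis may be reapplied at $\dly_{\ast}(t)$; this holds for $t$ sufficiently large by $\lim_{t\to\infty}\dly(t)=\infty$ and is standard in this framework. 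The secondary delicate point is keeping the inequalities strict all the way to the conclusion, which is achieved precisely by choosing $\nu_{0}$ strictly above the supremum.
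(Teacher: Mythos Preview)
Your proposal is correct and follows essentially the same route as the paper: choose $\nu_{0}\in(0,1)$ above the supremum, deduce $\mu\cf<\nu_{0}$ (hence $-\cf\in\reg{+}$ and $\ominus(-\cf)\le\cf/(1-\nu_{0})$), bound the exponential by $\exp\{3\lambda\nu_{0}/(1-\nu_{0})\}$ via $\int_{\dly_{\ast}^{2}(t)}^{\sigma(t)}\cf<3\nu_{0}$, and finish with the intermediate value theorem applied to the same function $\phi$. The only cosmetic differences are that the paper obtains the bound $3\nu_{0}$ by grouping $\int_{\dly_{\ast}(t)}^{\sigma(t)}\cf<2\nu_{0}$ with $\int_{\dly_{\ast}^{2}(t)}^{\dly_{\ast}(t)}\cf<\nu_{0}$ rather than your three-piece split, and it evaluates $\phi$ at $1-\nu_{0}$ instead of at $1$ (thereby locating $\lambda_{0}\in(0,1-\nu_{0})$, a refinement used later in Lemma~\ref{spfsa2lm2} but not needed for the present statement).
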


\begin{proof}
From \eqref{stla2lm1eq1}, there exists $\nu_{0}\in(0,1)_{\R}$ such that
\begin{equation}
\int_{\dly_{\ast}(t)}^{t}\cf(\eta)\Delta\eta<\nu_{0}\quad\text{for all}\ t\in[t_{0},\infty)_{\T}.\label{stla2lm1prfeq1}
\end{equation}
First, let us prove that $-\cf\in\reg{+}([t_{0},\infty)_{\T},\R)$.
By \eqref{stla2lm1prfeq1}, we obtain that
\begin{equation}
\nu_{0}>\int_{\dly_{\ast}(\sigma(t))}^{\sigma(t)}\cf(\eta)\Delta\eta
=\cf(t)\mu(t)+\int_{\dly_{\ast}(\sigma(t))}^{t}\cf(\eta)\Delta\eta\quad\text{for all}\ t\in[t_{0},\infty)_{\T},\label{stla2lm1prfeq2}
\end{equation}
which yields
\begin{equation}
1-\cf(t)\mu(t)>1-\nu_{0}>0\quad\text{for all}\ t\in[t_{0},\infty)_{\T}.\label{stla2lm1prfeq3}
\end{equation}
Therefore, $-\cf\in\reg{+}([t_{0},\infty)_{\T},\R)$.
Using \eqref{stla2lm1prfeq1} and \eqref{stla2lm1prfeq2}, we see that
\begin{equation}
\int_{\dly_{\ast}(t)}^{\sigma(t)}\cf(\eta)\Delta\eta=\int_{\dly_{\ast}(t)}^{t}\cf(\eta)\Delta\eta+\cf(t)\mu(t)
<2\nu_{0}\quad\text{for all}\ t\in[t_{0},\infty)_{\T}.\label{stla2lm1prfeq4}
\end{equation}
Using \eqref{stla2lm1prfeq3}, we have
\begin{equation}
\cf(t)\leq\frac{\cf(t)}{1-\cf(t)\mu(t)}=\big(\ominus(-\cf)\big)(t)<\frac{1}{1-\nu_{0}}\cf(t)\quad\text{for all}\ t\in[t_{0},\infty)_{\T}.\label{stla2lm1prfeq5}
\end{equation}
Note that $-(1-\lambda)\cf\in\reg{+}([t_{0},\infty)_{\T},\R)$ for $\lambda\in(0,1)_{\R}$.
From \eqref{stla2lm1prfeq5}, we get the estimate
\begin{align}
\int_{\dly_{\ast}(t)}^{t}\mathrm{e}_{\lambda(\ominus(-\cf))}(\sigma(t),\dly_{\ast}(\eta))\cf(\eta)\Delta\eta
\leq&\mathrm{e}_{\lambda(\ominus(-\cf))}\big(\sigma(t),\dly_{\ast}^{2}(t)\big)\int_{\dly_{\ast}(t)}^{t}\cf(\eta)\Delta\eta\notag\\
<&\mathrm{e}_{\frac{\lambda}{1-\nu_{0}}\cf}\big(\sigma(t),\dly_{\ast}^{2}(t)\big)\int_{\dly_{\ast}(t)}^{t}\cf(\eta)\Delta\eta\notag
\end{align}
for all $\lambda\in(0,1)_{\R}$ and all $t\in[t_{0},\infty)_{\T}$.
This yields by using \eqref{stla2lm1prfeq1} and \eqref{stla2lm1prfeq4} that
\begin{equation}
\int_{\dly_{\ast}(t)}^{t}\mathrm{e}_{\lambda(\ominus(-\cf))}(\sigma(t),\dly_{\ast}(\eta))\cf(\eta)\Delta\eta
<\nu_{0}\exp\bigg\{\frac{3\lambda\nu_{0}}{1-\nu_{0}}\bigg\}\notag
\end{equation}
for all $\lambda\in(0,1)_{\R}$ and all $t\in[t_{0},\infty)_{\T}$.
Now, consider the function $\varphi\in\cnt{}([0,1]_{\R},\R)$ defined by
\begin{equation}
\phi(\lambda)=(1-\lambda)-\nu_{0}\exp\bigg\{\frac{3\lambda\nu_{0}}{1-\nu_{0}}\bigg\}\quad\text{for}\ \lambda\in[0,1]_{\R}.\notag
\end{equation}
Clearly, $\phi(1-\nu_{0})=\nu_{0}(1-\mathrm{e}^{3\nu_{0}})<0$ and $\phi(0)=1-\nu_{0}>0$.
Therefore, we may find $\lambda_{0}\in(0,1-\nu_{0})_{\R}$ such that $\phi(\lambda_{0})=0$, i.e.,
\begin{align}
\int_{\dly_{\ast}(t)}^{t}\mathrm{e}_{\lambda_{0}(\ominus(-\cf))}(\sigma(t),\dly_{\ast}(\eta))\cf(\eta)\Delta\eta
<&\nu_{0}\exp\bigg\{\frac{3\lambda_{0}\nu_{0}}{1-\nu_{0}}\bigg\}
=(1-\lambda_{0})-\phi(\lambda_{0})\notag\\
=&1-\lambda_{0}\notag
\end{align}
for all $t\in[t_{0},\infty)_{\T}$,
which concludes the proof.
\end{proof}

\subsection{Some Properties of the Fundamental Solution}\label{spfsa2}

\begin{lemma}\label{spfsa2lm1}
Assume \ref{a2} and
\begin{equation}
\int_{\dly(t)}^{t}\cf(\eta)\Delta\eta\leq1
\quad\text{for all}\ \ \ \ t\in[t_{0},\infty)_{\T}.\label{spfsa2lm1eq1}
\end{equation}
Then, \eqref{spfsa1lm1eq2} holds.
\end{lemma}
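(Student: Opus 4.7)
The plan is to mirror the structure of Lemma~\ref{spfsa1lm1}, compensating for the weaker integration condition \eqref{spfsa2lm1eq1} by exploiting the extra strength of \ref{a2}. The key preparatory observation to establish first is the auxiliary pointwise bound $\mu(t)\cf(t) \leq 1$ for every $t \in [t_0, \infty)_\T$: at a right-scattered $t$, \ref{a2} gives $\dly(\sigma(t)) \leq t < \sigma(t)$, so
$$\mu(t)\cf(t) = \int_{t}^{\sigma(t)} \cf(\eta)\Delta\eta \leq \int_{\dly(\sigma(t))}^{\sigma(t)}\cf(\eta)\Delta\eta \leq 1$$
by \eqref{spfsa2lm1eq1} applied at $\sigma(t)$, while the bound is trivial at right-dense $t$.

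Fix $s \in [t_0, \infty)_\T$ and write $x := \fun(\cdot, s)$. The sign-preserving case is identical to Lemma~\ref{spfsa1lm1}: if $x > 0$ on $[s,\infty)_\T$, then $x^\Delta = -\cf\,x\circ\dly \leq 0$, forcing $0 < x \leq 1$. Otherwise, set
$$t_2 := \sup\{t \geq s : |x(\eta)| \leq 1 \text{ for all } \eta \in [s, t]_\T\}$$
and suppose for contradiction that $t_2$ is finite. If $t_2$ is right-scattered with, say, $x^\sigma(t_2) > 1$, the equation combined with $x(t_2) \leq 1 < x^\sigma(t_2)$ forces $\cf(t_2) > 0$ and $x(\dly(t_2)) < 0$. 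Integrating \eqref{introeq1} from $\dly(t_2)$ to $\sigma(t_2)$ and peeling the pointmass at $t_2$ off the rest of the integral yields
$$x^\sigma(t_2) = x(\dly(t_2))\bigl(1 - \mu(t_2)\cf(t_2)\bigr) - \int_{\dly(t_2)}^{t_2} \cf(\eta)\, x(\dly(\eta))\Delta\eta.$$
The preliminary bound $\mu(t_2)\cf(t_2) \leq 1$ together with $x(\dly(t_2)) < 0$ renders the first summand nonpositive; since $|x(\dly(\eta))| \leq 1$ on $[\dly(t_2),t_2)_\T$ (using the initial-condition vanishing of $\fun$ below $s$) and \eqref{spfsa2lm1eq1} at $t_2$ gives $\int_{\dly(t_2)}^{t_2}\cf\Delta\eta \leq 1$, the second summand is dominated by $1$ in absolute value. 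Hence $x^\sigma(t_2) \leq 1$, contradicting $x^\sigma(t_2) > 1$.

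When $t_2$ is right-dense, continuity gives $|x(t_2)| = 1$ and one reproduces the geometric setup of Lemma~\ref{spfsa1lm1}: locate $t_3 > t_2$ with $|x(t_3)| > 1$ and $x$ of fixed sign on $[t_2, t_3]_\T$, let $t_4 \in [s,t_2)_\T$ be the greatest generalized zero of $x$, and pick $t_5 \in [t_2, t_3]_\T$ minimally with $x^\Delta(t_5) > 0$ and $x^\sigma(t_5) > 1$; then, WLOG $x(t_2) = 1$ and $x > 0$ on $(t_4, t_3]_\T$, yielding $\dly(t_5) \leq t_4$. If $t_5$ is right-dense, integrating from $t_4$ to $t_5$ and using $x(t_4) \leq 0$, $|x(\dly)| \leq 1$, and \eqref{spfsa2lm1eq1} at $t_5$ immediately forces $x(t_5) \leq \int_{\dly(t_5)}^{t_5}\cf\Delta\eta \leq 1$. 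If $t_5$ is right-scattered, I apply the same jump-splitting identity as in the preceding paragraph at $t_5$; the minimality of $t_5$ ensures $|x| \leq 1$ throughout $[s,t_5]_\T$, and the bounds $\mu(t_5)\cf(t_5) \leq 1$ and $|x(\dly(t_5))| \leq 1$ absorb the pointmass contribution and close the estimate.

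The principal technical obstacle is this last right-scattered sub-case within the right-dense analysis: unlike in Case~1, the integration base value $x(t_4)$ and the argument $x(\dly(t_5))$ of the peeled-off jump are distinct, so the clean product $x(\dly)(1-\mu\cf)$ is not directly available. The resolution is to exploit the minimality in the choice of $t_5$ (so that $|x| \leq 1$ persists up to $t_5$) and thereby reduce the configuration at $t_5$ to an instance of the right-scattered argument from Case~1 applied with $t_5$ playing the role of the new critical point.
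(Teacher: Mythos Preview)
Your proposal is correct and follows essentially the same route as the paper's own proof: both arguments first extract the pointwise bound $\mu(t)\cf(t)\leq 1$ from \ref{a2} and \eqref{spfsa2lm1eq1}, dispose of the eventually positive case by monotonicity, and at the critical point split the integral $\int^{\sigma}\cf\,x\circ\dly$ into the jump contribution $\mu\cf\,x(\dly)$ plus the remaining integral up to $t$, so that the factor $1-\mu\cf\geq 0$ renders the term containing the negative value $x(\dly)$ harmless and \eqref{spfsa2lm1eq1} bounds the rest. The only presentational difference is that in the right-dense $t_{2}$ case the paper treats the point $t_{5}$ uniformly (it simply peels off the jump at $t_{5}$ inside the integral from $t_{4}$ to $\sigma(t_{5})$ and uses $x(t_{4})\leq 0$ together with $-\mu(t_{5})\cf(t_{5})x(\dly(t_{5}))$ and $|x(\dly(\cdot))|\leq 1$ on $[t_{4},t_{5}]_{\T}$), whereas you split into right-dense versus right-scattered $t_{5}$ and invoke a minimality choice to recycle the Case~1 argument; this is a harmless reorganisation of the same estimate.
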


\begin{proof}
Fix $s\in[t_{0},\infty)_{\T}$ and denote $x(t):=\fun(t,s)$ for $t\in[s,\infty)_{\T}$.
First, let $x$ be positive on $[s,\infty)_{\T}$,
then $x$ is decreasing on $[s,\infty)_{\T}$,
which implies $0<x(t)\leq1$ for all $t\in[s,\infty)_{\T}$.
Thus the claim is true for positive $x$.
Next, let $x$ have some generalized zeros on $[s,\infty)_{\T}$, i.e.,
there exists $t_{1}\in[s,\infty)_{\T}$ such that either $x(t_{1})=0$ or $x(t_{1})>0$ and $x^{\sigma}(t_{1})<0$.
Hence, $0\leq{}x(t)\leq1$ for all $t\in[s,t_{1}]_{\T}$.
Let
\begin{equation}
t_{2}:=\sup\{t\in[s,\infty)_{\T}:\ |x(\eta)|\leq1\ \text{for all}\ \eta\in[s,t]_{\T}\}.\notag
\end{equation}
Clearly, $t_{2}\geq{}t_{1}$.
To prove $t_{2}=\infty$, assume the contrary that $t_{2}$ is finite.
Assume for now that $t_{2}$ is right-scattered, i.e., $\mu(t_{2})>0$.
Then, we have $|x^{\sigma}(t_{2})|>1$ and $|x(t)|\leq1$ for all $t\in[s,t_{2}]_{\T}$.
Without loss of generality, let $x^{\sigma}(t_{2})>1$
(he case where $x^{\sigma}(t_{2})<1$ is treated similarly),
which implies $x^{\Delta}(t_{2})>0$.
From \eqref{introeq1}, we have $x(\dly(t_{2}))<0$.
Note that
\begin{equation}
\mu(t_{2})\cf(t_{2})=\int_{t_{2}}^{\sigma(t_{2})}\cf(\eta)\Delta\eta
\leq\int_{\dly(\sigma(t_{2}))}^{\sigma(t_{2})}\cf(\eta)\Delta\eta\leq1,\notag
\end{equation}
where we have used \ref{a2} for the first inequality and \eqref{spfsa2lm1eq1} in the last inequality.
Integrating \eqref{introeq1} from $\alpha(t_{2})$ to $\sigma(t_{2})$, we get
\begin{align}
x^{\sigma}(t_{2})
=&x(\dly(t_{2}))-\int_{\dly(t_{2})}^{\sigma(t_{2})}\cf(\eta)x(\dly(\eta))\Delta\eta\notag\\
=&[1-\mu(t_{2})\cf(t_{2})]x(\dly(t_{2}))-\int_{\dly(t_{2})}^{t_{2}}\cf(\eta)x(\dly(\eta))\Delta\eta\notag\\
\leq&-\int_{\dly(t_{2})}^{t_{2}}\cf(\eta)x(\dly(\eta))\Delta\eta\leq\int_{\dly(t_{2})}^{t_{2}}\cf(\eta)\Delta\eta\leq1,\notag
\end{align}
which contradicts $x^{\sigma}(t_{2})>1$.
This shows that $t_{2}$ is right-dense.
That is, $x$ is continuous at $t_{2}$.
In this case, $|x(t_{2})|=1$.
Hence, we can find $t_{3}\in(t_{2},\infty)_{\T}$
such that $|x(t_{3})|>1$ and $x$ is of fixed sign on $[t_{2},t_{3}]_{\T}$.
Without loss of generality, assume that $x(t_{2})=1$ and $x(t)>0$ for all $t\in[t_{2},t_{3}]_{\T}$.
The case where $x(t_{2})=-1$ and $x(t)<0$ for all $t\in[t_{2},t_{3}]_{\T}$ is treated similarly.
Let $t_{4}$ be the greatest generalized zero of $x$ in $[s,t_{2})_{\T}$.
Hence, we have either $x(t_{4})=0$ or $x(t_{4})<0$ and $x^{\sigma}(t_{4})>0$.
Further, $x(t)>0$ for all $t\in(t_{4},t_{3}]_{\T}$.
We can also find $t_{5}\in[t_{2},t_{3}]_{\T}$ such that $x^{\Delta}(t_{5})>0$ and $x^{\sigma}(t_{5})>1$.
This implies $x(\dly(t_{5}))<0$ by \eqref{introeq1}.
If $t_{4}<\dly(t_{5})\leq{}t_{5}$,
then $x(\dly(t_{5}))>0$,
which is a contradiction.
Thus, $\dly(t_{5})\leq{}t_{4}$.
Further, we have $|x(\dly(t))|\leq1$ for all $t\in[t_{4},t_{5}]_{\T}$.
So, integrating \eqref{introeq1} from $t_{4}$ to $\sigma(t_{5})$ yields
\begin{align}
x^{\sigma}(t_{5})=&x(t_{4})-\int_{t_{4}}^{\sigma(t_{5})}\cf(\eta)x(\dly(\eta))\Delta\eta\notag\\
=&x(t_{4})-\mu(t_{5})\cf(t_{5})x(\dly(t_{5}))-\int_{t_{4}}^{t_{5}}\cf(\eta)x(\dly(\eta))\Delta\eta\notag\\
\leq&\int_{t_{4}}^{t_{5}}\cf(\eta)\Delta\eta
\leq\int_{\dly(t_{5})}^{t_{5}}\cf(\eta)\Delta\eta\leq1,\notag
\end{align}
which contradicts $x^{\sigma}(t_{5})>1$.
This implies $t_{2}=\infty$ and completes the proof.
\end{proof}

\begin{lemma}\label{spfsa2lm2}
Assume \ref{a2}, \eqref{daleq1} and \eqref{stla2lm1eq1}.
Then,
\begin{equation}
|\fun(t,s)|\leq\mathrm{e}_{-\lambda_{0}\cf}(t,s)\quad\text{for all}\ (t,s)\in\Lambda_{t_{0}},\notag
\end{equation}
where $\Lambda_{t_{0}}$ is defined in \eqref{dalthm1it2eq2} and $\lambda_{0}\in(0,1)_{\R}$ is provided in 
Lemma~\ref{stla2lm1}.
\end{lemma}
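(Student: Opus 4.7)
The plan is to follow the proof of Lemma~\ref{spfsa1lm2}, adapted to exploit the stronger hypothesis \ref{a2}. The decisive change is to rewrite $x(\dly(t))$ in terms of $x(t)$ rather than $x^{\sigma}(t)$ via $x(\dly(t))=x(t)-\int_{\dly(t)}^{t}x^{\Delta}(\eta)\Delta\eta$. After substituting \eqref{introeq1} into this identity, \eqref{introeq1} is recast as the perturbed scalar equation $x^{\Delta}+\cf x=g$, with $g(t):=-\cf(t)\int_{\dly(t)}^{t}\cf(\eta)x(\dly(\eta))\Delta\eta$; accordingly, the unperturbed propagator becomes $\mathrm{e}_{-\cf}$ in place of $\mathrm{e}_{\ominus\cf}$. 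This is legitimate because Lemma~\ref{stla2lm1} yields $-\cf\in\reg{+}([t_{0},\infty)_{\T},\R)$, so $\mathrm{e}_{-\cf}$ is well-defined and strictly positive.

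Fix $s\in[t_{0},\infty)_{\T}$, set $x(t):=\fun(t,s)$, and note that the rewriting above is valid on $[\dly_{-1}(s),\infty)_{\T}$ since there $\dly(t)\geq s$ and hence $x$ is $\Delta$-differentiable on $[\dly(t),t]_{\T}$. Variation of parameters applied to the perturbed equation then yields
\begin{equation*}
x(t)=x(\dly_{-1}(s))\mathrm{e}_{-\cf}(t,\dly_{-1}(s))-\int_{\dly_{-1}(s)}^{t}\mathrm{e}_{-\cf}(t,\sigma(\eta))\cf(\eta)\int_{\dly(\eta)}^{\eta}\cf(\zeta)x(\dly(\zeta))\Delta\zeta\Delta\eta
\end{equation*}
for $t\in[\dly_{-1}(s),\infty)_{\T}$.

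With $\lambda_{0}\in(0,1)_{\R}$ supplied by Lemma~\ref{stla2lm1}, introduce the weighted function $y(t):=x(t)\mathrm{e}_{\ominus(-\lambda_{0}\cf)}(t,\dly_{-1}(s))$ for $t\in[\dly_{-1}(s),\infty)_{\T}$ and $y(t):=x(t)$ for $t\in[s,\dly_{-1}(s)]_{\T}$. Lemma~\ref{spfsa2lm1} (whose hypothesis follows from \eqref{stla2lm1eq1}) already gives $|y|\leq1$ on the initial segment. Multiply the integral identity above by $\mathrm{e}_{\ominus(-\lambda_{0}\cf)}(t,\dly_{-1}(s))$ to obtain an integral equation for $y$, then show $|y|\leq1$ throughout $[s,\infty)_{\T}$ by the contradiction template of Lemma~\ref{spfsa1lm2}: let $t_{2}:=\sup\{t:|y(\eta)|\leq1\ \text{for all}\ \eta\in[s,t]_{\T}\}$, assume $t_{2}<\infty$, and separate the right-scattered and right-dense cases. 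In the right-scattered case, apply Lemma~\ref{stla2lm1} inside the inner integral and recognize the resulting outer integrand (in $\eta$) as the $\Delta$-derivative of the exponential $\mathrm{e}_{(-\cf)\ominus(-\lambda_{0}\cf)}(\sigma(t_{2}),\cdot)$, so that the double integral telescopes to a value not exceeding $1$, contradicting $|y^{\sigma}(t_{2})|>1$; the right-dense case reduces to this by continuity of $y$ at $t_{2}$.

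Once $|y|\leq1$ is established, one obtains $|x(t)|\leq\mathrm{e}_{-\lambda_{0}\cf}(t,\dly_{-1}(s))$ for $t\geq\dly_{-1}(s)$; combining this with the bound $|x|\leq1$ on $[s,\dly_{-1}(s)]_{\T}$ from Lemma~\ref{spfsa2lm1} and estimating $\mathrm{e}_{-\lambda_{0}\cf}(\dly_{-1}(s),s)$ via \cite[Lemma~3.2]{MR2842561} together with \eqref{daleq1} delivers the claimed estimate on $\Lambda_{t_{0}}$. The main obstacle is the algebraic manipulation in the right-scattered step: one must show that the integrand produced by Lemma~\ref{stla2lm1}, once multiplied by the weight $\mathrm{e}_{(-\cf)\ominus(-\lambda_{0}\cf)}$, coincides with the $\Delta$-derivative in $\eta$ of that exponential---the $-\cf$ analogue of the identity $\frac{(1-\lambda_{0})\cf}{1+\lambda_{0}\cf\mu}=\cf\ominus(\lambda_{0}\cf)$ used in the proof of Lemma~\ref{spfsa1lm2}.
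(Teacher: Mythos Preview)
Your overall strategy matches the paper's, but your choice of weight differs in a way that spoils the exact telescoping you are counting on. The paper multiplies by $\mathrm{e}_{\lambda_{0}(\ominus(-\cf))}(\cdot,\dly_{-1}(s))$, not by $\mathrm{e}_{\ominus(-\lambda_{0}\cf)}(\cdot,\dly_{-1}(s))$. This matters for two reasons. First, after the substitution $x(\dly(\zeta))\mapsto y(\dly(\zeta))$ the paper's weight produces the inner kernel $\mathrm{e}_{\lambda_{0}(\ominus(-\cf))}(\sigma(\eta),\dly(\zeta))$, which is exactly what Lemma~\ref{stla2lm1} bounds; your weight produces $\mathrm{e}_{\ominus(-\lambda_{0}\cf)}(\sigma(\eta),\dly(\zeta))$ instead, so Lemma~\ref{stla2lm1} only applies after an additional comparison $\ominus(-\lambda_{0}\cf)\leq\lambda_{0}(\ominus(-\cf))$ that you do not mention. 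Second, and this is your stated ``main obstacle'': with the paper's weight one has the circle-plus identity
\[
\lambda_{0}\big(\ominus(-\cf)\big)\oplus(-\cf)=-(1-\lambda_{0})\cf,
\]
so the outer exponential is $\mathrm{e}_{-(1-\lambda_{0})\cf}$ and the integrand obtained after the $1-\lambda_{0}$ bound is exactly
\[
(1-\lambda_{0})\cf(\eta)\,\mathrm{e}_{-(1-\lambda_{0})\cf}(\sigma(t_{2}),\sigma(\eta))=\frac{\Delta}{\Delta\eta}\,\mathrm{e}_{-(1-\lambda_{0})\cf}(\sigma(t_{2}),\eta),
\]
giving a clean telescoping sum equal to $1$. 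With your weight the outer exponential is $\mathrm{e}_{(-\cf)\ominus(-\lambda_{0}\cf)}$, whose $\Delta$-derivative in $\eta$ equals $\dfrac{(1-\lambda_{0})\cf(\eta)}{1-\lambda_{0}\cf(\eta)\mu(\eta)}\,\mathrm{e}_{(-\cf)\ominus(-\lambda_{0}\cf)}(\sigma(t_{2}),\sigma(\eta))$; this does \emph{not} coincide with $(1-\lambda_{0})\cf(\eta)\,\mathrm{e}_{(-\cf)\ominus(-\lambda_{0}\cf)}(\sigma(t_{2}),\sigma(\eta))$ at right-scattered points. The inequality goes the right way, so your argument can be repaired by two extra comparisons, but the ``$-\cf$ analogue'' you seek is not an identity with your weight.

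The trade-off is that the paper's weight makes the contradiction step exact but then needs the comparison $\lambda_{0}(\ominus(-\cf))\geq\ominus(-\lambda_{0}\cf)$ at the end to pass from $|y|\leq1$ to $|x(t)|\leq\mathrm{e}_{-\lambda_{0}\cf}(t,\dly_{-1}(s))$, whereas your weight gives that last step for free. Either route, the proof actually concludes with $|\fun(t,s)|\leq M_{0}\,\mathrm{e}_{-\lambda_{0}\cf}(t,s)$ for some $M_{0}>1$ arising from the $[s,\dly_{-1}(s)]$ segment (as in the paper's final lines); the constant is tacit in the lemma statement.
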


\begin{proof}
For simplicity of notation, fix $s\in[t_{0},\infty)_{\T}$ and let $x(t):=\fun(t,s)$ for $t\in[s,\infty)_{\T}$.
We may suppose that the claim of Lemma~\ref{stla2lm1} holds with $\lambda_{0}\in(0,1-\nu_{0})_{\R}$ on $[s,\infty)_{\T}$,
where $\nu_{0}\in(0,1)_{\R}$ satisfies \eqref{stla2lm1prfeq1}.
From \eqref{introeq1}, we have
\begin{align}
x^{\Delta}(t)=&-\cf(t)x(t)+\cf(t)\int_{\dly(t)}^{t}x^{\Delta}(\eta)\Delta\eta\notag\\
=&-\cf(t)x(t)-\cf(t)\int_{\dly(t)}^{t}\cf(\eta)x(\dly(\eta))\Delta\eta\label{spfsa2lm2prfeq1}
\end{align}
for all $t\in[\dly_{-1}(s),\infty)_{\T}$.
Applying the solution representation formula in Lemma~\ref{dallm1}
for \eqref{spfsa2lm2prfeq1}, we get
\begin{equation}
\begin{aligned}[]
x(t)=&x(\dly_{-1}(s))\mathrm{e}_{-\cf}(t,\dly_{-1}(s))
-\int_{\dly_{-1}(s)}^{t}\mathrm{e}_{-\cf}(t,\sigma(\eta))\cf(\eta)\\
&\phantom{x(\dly_{-1}(s))\mathrm{e}_{-\cf}(t,\dly_{-1}(s))-\int_{\dly_{-1}(s)}^{t}}
\times\int_{\dly(\eta)}^{\eta}\cf(\zeta)x(\dly(\zeta))\Delta\zeta\Delta\eta
\end{aligned}\label{spfsa2lm2prfeq2}
\end{equation}
for all $t\in[\dly_{-1}(s),\infty)_{\T}$.
Let us define a function $y\in\crd{1}([s,\infty)_{\T},\R)$ by
\begin{equation}
y(t):=
\begin{cases}
x(t)\mathrm{e}_{\lambda_{0}(\ominus(-\cf))}(t,\dly_{-1}(s)),&t\in[\dly_{-1}(s),\infty)_{\T},\\
x(t),&t\in[s,\dly_{-1}(s)]_{\T}.
\end{cases}\label{spfsa2lm2prfeq3}
\end{equation}
Multiplying \eqref{spfsa2lm2prfeq2} by $\mathrm{e}_{\lambda_{0}(\ominus(-\cf))}(\cdot,\dly_{-1}(s))$ and considering the fact that
\begin{equation}
\begin{aligned}[]
\lambda_{0}\Big(\ominus\big(-\cf(t)\big)\Big)\oplus\big(-\cf(t)\big)
=&\frac{\lambda_{0}\cf(t)}{1-\cf(t)\mu(t)}-\cf(t)-\frac{\lambda_{0}\big(\cf(t)\big)^{2}\mu(t)}{1-\cf(t)\mu(t)}\\
=&-(1-\lambda_{0})\cf(t)
\end{aligned}\notag
\end{equation}
for all $t\in[\dly_{-1}(s),\infty)_{\T}$, we get
\begin{align}
\begin{split}
y(t)=&y(\dly_{-1}(s))\mathrm{e}_{-(1-\lambda_{0})\cf}(t,\dly_{-1}(s))\\
&-\int_{\dly_{-1}(s)}^{t}\mathrm{e}_{-(1-\lambda_{0})\cf}(t,\sigma(\eta))\cf(\eta)\\
&\phantom{-\int_{\dly_{-1}(s)}^{t}}\times\int_{\dly(\eta)}^{\eta}\mathrm{e}_{-(1-\lambda_{0})\cf}(\sigma(\eta),\dly_{-1}(s))\cf(\zeta)x(\dly(\zeta))\Delta\zeta\Delta\eta
\end{split}\notag\\
\begin{split}
=&y(\dly_{-1}(s))\mathrm{e}_{-(1-\lambda_{0})\cf}(t,\dly_{-1}(s))\\
&-\int_{\dly_{-1}(s)}^{t}\mathrm{e}_{-(1-\lambda_{0})\cf}(t,\sigma(\eta))\cf(\eta)\\
&\phantom{-\int_{\dly_{-1}(s)}^{t}}\times\int_{\dly(\eta)}^{\eta}\mathrm{e}_{\lambda_{0}(\ominus(-\cf))}(\sigma(\eta),\dly(\zeta))\cf(\zeta)y(\dly(\zeta))\Delta\zeta\Delta\eta
\end{split}\label{spfsa2lm2prfeq4}
\end{align}
for all $t\in[\dly_{-1}(s),\infty)_{\T}$.

Next, we claim that $|y(t)|\leq1$ for all $t\in[s,\infty)_{\T}$.
Let
$$t_{2}:=\sup\{t\in[s,\infty)_{\T}:\ |y(\eta)|\leq1\ \text{for all}\ \eta\in[s,t]_{\T}\}.$$
To prove $t_{2}=\infty$,
assume the contrary that $t_{2}$ is finite.
Clearly, $t_{2}>\dly_{-1}(s)$ by \eqref{spfsa2lm2prfeq3} and Lemma~\ref{spfsa2lm1}.
Assume for now that $t_{2}$ is right-scattered.
Then, we have $|y^{\sigma}(t_{2})|>1$ and $|y(t)|\leq1$ for all $t\in[s,t_{2}]_{\T}$.
Using \eqref{spfsa2lm2prfeq4}, Lemma~\ref{stla2lm1} and \cite[Theorem~2.39]{MR1843232}, we obtain
\begin{align}
\begin{split}
|y^{\sigma}(t_{2})|\leq&\mathrm{e}_{-(1-\lambda_{0})\cf}(\sigma(t_{2}),\dly_{-1}(s))\\
&+\int_{\dly_{-1}(s)}^{\sigma(t_{2})}\mathrm{e}_{-(1-\lambda_{0})\cf}(\sigma(t_{2}),\eta)\cf(\eta)\\
&\phantom{\int_{\dly_{-1}(s)}^{\sigma(t_{2})}}
\times\int_{\dly_{\ast}(\eta)}^{\eta}\mathrm{e}_{\lambda_{0}(\ominus(-\cf))}(\sigma(\eta),\dly_{\ast}(\zeta))\cf(\zeta)\Delta\zeta\Delta\eta
\end{split}\notag\\
\begin{split}
<&\mathrm{e}_{-(1-\lambda_{0})\cf}(\sigma(t_{2}),\dly_{-1}(s))\\
&+(1-\lambda_{0})\int_{\dly_{-1}(s)}^{\sigma(t_{2})}\mathrm{e}_{-(1-\lambda_{0})\cf}(\sigma(t_{2}),\sigma(\eta))\cf(\eta)\Delta\eta
\end{split}\notag\\
=&\mathrm{e}_{-(1-\lambda_{0})\cf}(\sigma(t_{2}),\dly_{-1}(s))
+\int_{\dly_{-1}(s)}^{\sigma(t_{2})}\frac{\Delta}{\Delta\eta}\mathrm{e}_{-(1-\lambda_{0})\cf}(\sigma(t_{2}),\eta)\Delta\eta=1,\notag
\end{align}
which is a contradiction.
Thus $t_{2}$ is right-dense, hence $y$ is continuous at $t_{2}$.
In this case, $|y(t_{2})|=1$.
Using Lemma~\ref{stla1lm1} and \eqref{spfsa1lm2prfeq4},
we can proceed as above and show that $|y(t_{2})|<1$,
which is also a contradiction.
This implies $t_{2}=\infty$, i.e.,
$|y(t)|\leq1$ for all $t\in[s,\infty)_{\T}$.

It follows from \eqref{spfsa2lm2prfeq3} that
\begin{equation}
|y(t)|=|x(t)|\mathrm{e}_{\lambda_{0}(\ominus(-\cf))}(t,\dly_{-1}(s))
\geq|x(t)|\mathrm{e}_{\ominus(-\lambda_{0}\cf)}(t,\dly_{-1}(s))
\label{spfsa2lm2prfeq5}
\end{equation}
for all $t\in[\dly_{-1}(s),\infty)_{\T}$
since we have
\begin{equation}
\lambda_{0}\Big(\ominus\big(-\cf(t)\big)\Big)
=\frac{\lambda_{0}\cf(t)}{1-\cf(t)\mu(t)}
\geq\frac{\lambda_{0}\cf(t)}{1-\lambda_{0}\cf(t)\mu(t)}
=\ominus\big(-\lambda_{0}\cf(t)\big)
\notag
\end{equation}
for all $t\in[\dly_{-1}(s),\infty)_{\T}$.
From \eqref{spfsa2lm2prfeq5}, we obtain
\begin{equation}
|x(t)|\leq\mathrm{e}_{-\lambda_{0}\cf}(t,\dly_{-1}(s))=\mathrm{e}_{\ominus(-\lambda_{0}\cf)}(\dly_{-1}(s),s)\mathrm{e}_{-\lambda_{0}\cf}(t,s)
\label{spfsa2lm2prfeq6}
\end{equation}
for all $t\in[\dly_{-1}(s),\infty)_{\T}$.
As in the proof of Lemma~\ref{stla2lm1} and by virtue of \cite[Lemma~3.2]{MR2842561}, we estimate
\begin{align}
\mathrm{e}_{\ominus(-\lambda_{0}\cf)}(\dly_{-1}(s),s)
=&\mathrm{e}_{\frac{\lambda_{0}\cf}{1-\lambda_{0}\nu_{0}}}(\dly_{-1}(s),s)
\leq\exp\bigg\{\frac{\lambda_{0}}{1-\lambda_{0}\nu_{0}}\int_{s}^{\dly_{-1}(s)}\cf(\eta)\Delta\eta\bigg\}\notag\\
\leq&\mathrm{e}^{\frac{\lambda_{0}K_{0}}{1-\lambda_{0}\nu_{0}}}=:M_{0},\notag
\end{align}
where $K_{0}:=\sup_{s\in[t_{0},\infty)_{\T}}\negmedspace\big\{\negmedspace\int_{s}^{\dly_{-1}(s)}\cf(\eta)\Delta\eta\big\}$.
Hence, by \eqref{spfsa2lm2prfeq6}, $M_{0}>1$ and the fact that $|x(t)|\leq1$ for all $t\in[s,\dly_{-1}(s)]_{\T}$,
we have $|x(t)|\leq{}M_{0}\mathrm{e}_{-\lambda_{0}\cf}(t,s)$ for all $t\in[s,\infty)_{\T}$,
which concludes the proof.
\end{proof}

\subsection{Uniform Stability}\label{usa2}

\begin{theorem}\label{usa2thm1}
Assume \ref{a2}, \eqref{daleq1} and \eqref{spfsa2lm1eq1}.
Then, the trivial solution of \eqref{introeq1} is uniformly stable.
\end{theorem}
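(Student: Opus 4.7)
The plan is to mirror exactly the argument used for Theorem~\ref{usa1thm1}, since the hypotheses here differ only in swapping \ref{a1} for the stronger \ref{a2} and \eqref{spfsa1lm1eq1} for the correspondingly relaxed \eqref{spfsa2lm1eq1}, and all the heavy lifting has already been done in Section~\ref{spfsa2}.

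First, I would invoke Lemma~\ref{spfsa2lm1}, whose hypotheses \ref{a2} and \eqref{spfsa2lm1eq1} are precisely what is assumed. This immediately yields the bound $|\fun(t,s)|\leq 1$ on $\Lambda_{t_{0}}$, i.e., condition \ref{dalthm1it2} of Theorem~\ref{dalthm1} with $M_{0}=1$.

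Next, since condition \eqref{daleq1} is also assumed, Theorem~\ref{dalthm1} applies and gives the equivalence between the boundedness of the fundamental solution on $\Lambda_{t_{0}}$ and the uniform stability of the trivial solution of \eqref{introeq1}. Combining these two steps yields uniform stability.

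There is essentially no obstacle: the real work is packaged into Lemma~\ref{spfsa2lm1} (the generalized-zero case analysis showing $|\fun(t,s)|\leq 1$) and Theorem~\ref{dalthm1} (the solution representation estimate turning fundamental solution bounds into stability). The proof is therefore a one-line appeal to these two results, in direct parallel to the proof of Theorem~\ref{usa1thm1}.
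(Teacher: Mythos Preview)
Your proposal is correct and matches the paper's own proof exactly: the paper's argument is the one-line ``The proof follows from Theorem~\ref{dalthm1} and Lemma~\ref{spfsa2lm1},'' which is precisely the combination you describe.
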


\begin{proof}
The proof follows from Theorem~\ref{dalthm1} and Lemma~\ref{spfsa2lm1}.
\end{proof}

\subsection{Global Asymptotic Stability}\label{gasa2}

In this section, we suppose that \eqref{gasa1eq1} holds.

\begin{theorem}\label{gasa2thm1}
Assume \ref{a2}, \eqref{daleq1}, \eqref{gasa1eq1} and \eqref{stla2lm1eq1}.
Then, the trivial solution of \eqref{introeq1} is globally asymptotically stable.
\end{theorem}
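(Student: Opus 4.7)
The proof follows the same template used for Theorem~\ref{gasa1thm1}, but with Lemma~\ref{spfsa2lm2} replacing Lemma~\ref{spfsa1lm2}. Uniform stability is essentially free: since $\dly_{\ast}(t)\leq\dly(t)$ on $[t_{0},\infty)_{\T}$, condition \eqref{stla2lm1eq1} is stronger than \eqref{spfsa2lm1eq1}, so Theorem~\ref{usa2thm1} applies and yields uniform stability of the trivial solution.

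To obtain global attractivity, I would invoke Lemma~\ref{spfsa2lm2}, which under the present hypotheses gives
\begin{equation}
|\fun(t,s)|\leq{}M_{0}\mathrm{e}_{-\lambda_{0}\cf}(t,s)\quad\text{for all}\ (t,s)\in\Lambda_{t_{0}},\notag
\end{equation}
with $\lambda_{0}\in(0,1)_{\R}$ from Lemma~\ref{stla2lm1}, and with $-\cf\in\reg{+}([t_{0},\infty)_{\T},\R)$ so that the right-hand side is a genuine positive regressive exponential. It then suffices to show that this bound tends to zero as $t\to\infty$ for each fixed $s\in[t_{0},\infty)_{\T}$. Because $\lambda_{0}\cf(t)\mu(t)\leq\lambda_{0}\nu_{0}<1$ on $[t_{0},\infty)_{\T}$ (from \eqref{stla2lm1eq1}), the elementary inequality $\log(1-u)\leq -u$ on $[0,1)$ applied to the time-scale exponential gives
\begin{equation}
\mathrm{e}_{-\lambda_{0}\cf}(t,s)\leq\exp\bigg\{-\lambda_{0}\int_{s}^{t}\cf(\eta)\Delta\eta\bigg\},\notag
\end{equation}
which tends to $0$ as $t\to\infty$ by \eqref{gasa1eq1}. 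Alternatively, one may simply cite parts \ref{alcrl1it1} and \ref{alcrl1it5} of Corollary~\ref{alcrl1}, exactly as in the proof of Theorem~\ref{gasa1thm1}; either route reduces this step to a routine check.

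Having shown $\lim_{t\to\infty}\fun(t,s)=0$ for every $s\in[t_{0},\infty)_{\T}$, the hypothesis \eqref{daleq1} allows me to apply Theorem~\ref{dalthm2}, which yields that the trivial solution of \eqref{introeq1} is globally attracting. Combined with uniform stability established in the first step, Definition~\ref{dfngas} gives global asymptotic stability. I do not expect any real obstacle here: the substantive work has already been carried out in Lemma~\ref{spfsa2lm2}, and this theorem is an assembly of that estimate with the divergence assumption \eqref{gasa1eq1} and the equivalence provided by Theorem~\ref{dalthm2}. The only subtle point is the passage from the regressive exponential $\mathrm{e}_{-\lambda_{0}\cf}$ to an ordinary exponential bound, which is precisely where the condition $-\cf\in\reg{+}$ (guaranteed by Lemma~\ref{stla2lm1}) is used.
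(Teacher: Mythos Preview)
Your proposal is correct and follows essentially the same route as the paper: invoke Lemma~\ref{spfsa2lm2} for the exponential bound on $\fun$, use the divergence assumption \eqref{gasa1eq1} to force $\mathrm{e}_{-\lambda_{0}\cf}(t,s)\to0$, and conclude via Theorem~\ref{dalthm2}. One small remark: since the bound here is $\mathrm{e}_{-\lambda_{0}\cf}$ rather than $\mathrm{e}_{\ominus(\lambda_{0}\cf)}$, the directly relevant implication in Corollary~\ref{alcrl1} is \ref{alcrl1it1}$\Leftrightarrow$\ref{alcrl1it6} (not \ref{alcrl1it5}, which the paper uses for Theorem~\ref{gasa1thm1}); of course all six items are equivalent, and your elementary $\log(1-u)\leq-u$ argument sidesteps the issue entirely.
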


\begin{proof}
The proof follows from Theorem~\ref{dalthm2} and Lemma~\ref{spfsa2lm2} together with \ref{alcrl1it1} and \ref{alcrl1it6} of Corollary~\ref{alcrl1} given in \hyperref[appb]{Appendix~B}.
\end{proof}

\subsection{Uniform Exponential Stability}\label{uesa2}

\begin{theorem}\label{uesa2thm1}
Assume \ref{a2}, \eqref{daleq1}, \eqref{daleq2} and \eqref{stla2lm1eq1}.
Moreover, assume that there exist $M_{1},\lambda_{1}\in\R^{+}$ such that
\begin{equation}
\mathrm{e}_{-\lambda_{0}\cf}(t,s)\leq{}M_{1}\mathrm{e}_{\ominus\lambda_{1}}(t,s)\quad\text{for all}\ (t,s)\in\Lambda_{t_{0}},\notag
\end{equation}
where $\Lambda_{t_{0}}$ is defined in \eqref{dalthm1it2eq2} and $\lambda_{0}\in(0,1)_{\R}$ is provided in 
Lemma~\ref{spfsa2lm2}.
Then, the trivial solution of \eqref{introeq1} is uniformly exponentially stable.
\end{theorem}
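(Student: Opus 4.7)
The plan is to simply assemble Lemma~\ref{spfsa2lm2} with the additional exponential-domination hypothesis and invoke Theorem~\ref{dalthm3}, exactly mirroring the way Theorem~\ref{uesa1thm1} was deduced from Lemma~\ref{spfsa1lm2} in the previous section. Since assumption~\ref{a2}, condition~\eqref{daleq1}, and condition~\eqref{stla2lm1eq1} are in force, Lemma~\ref{spfsa2lm2} produces a constant $M_{0}\in\R^{+}$ and the $\lambda_{0}\in(0,1)_{\R}$ provided by Lemma~\ref{stla2lm1} so that
\begin{equation}
|\fun(t,s)|\leq M_{0}\mathrm{e}_{-\lambda_{0}\cf}(t,s)\quad\text{for all}\ (t,s)\in\Lambda_{t_{0}}.\notag
\end{equation}

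Next, I would feed this estimate into the additional hypothesis of the theorem, which furnishes constants $M_{1},\lambda_{1}\in\R^{+}$ with $\mathrm{e}_{-\lambda_{0}\cf}(t,s)\leq{}M_{1}\mathrm{e}_{\ominus\lambda_{1}}(t,s)$ on $\Lambda_{t_{0}}$. Chaining these two inequalities yields $|\fun(t,s)|\leq M_{0}M_{1}\mathrm{e}_{\ominus\lambda_{1}}(t,s)$ for every $(t,s)\in\Lambda_{t_{0}}$, which is precisely the estimate \eqref{dalthm3it2eq1} required in item \ref{dalthm3it2} of Theorem~\ref{dalthm3}.

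Finally, since conditions~\eqref{daleq1} and \eqref{daleq2} both hold, Theorem~\ref{dalthm3} applies in full and its implication \ref{dalthm3it2}$\Rightarrow$\ref{dalthm3it1} immediately gives uniform exponential stability of the trivial solution of~\eqref{introeq1}. There is no real obstacle here: all the substantive analysis (constructing the auxiliary function $y$, using the time-scale variation-of-parameters formula, and bootstrapping the bound $|y|\leq1$ on $[\dly_{-1}(s),\infty)_{\T}$) has already been absorbed in Lemma~\ref{spfsa2lm2}, and Theorem~\ref{dalthm3} packages the translation from a scalar exponential bound on $\fun$ into the stability conclusion. The only thing to watch is the matching between the $\lambda_{0}$ from Lemma~\ref{stla2lm1} (used in Lemma~\ref{spfsa2lm2}) and the $\lambda_{0}$ named in the exponential-domination assumption of the theorem, and this is enforced by the theorem's statement.
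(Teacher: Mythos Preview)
Your proposal is correct and follows the same approach as the paper, which simply states that the proof follows from Theorem~\ref{dalthm3} and Lemma~\ref{spfsa2lm2}. You have merely spelled out the chaining of the two estimates in slightly more detail than the paper does.
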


\begin{proof}
The proof follows from Theorem~\ref{dalthm3} and Lemma~\ref{spfsa2lm2}.
\end{proof}

\begin{corollary}\label{uesa2crl1}
Assume \ref{a2}, \eqref{daleq1}, \eqref{daleq2} and \eqref{stla2lm1eq1}.
Moreover, assume that for every $\lambda\in(0,1)_{\R}$ there exist $M_{1},\lambda_{1}\in\R^{+}$ (which may depend on $\lambda$) such that
\begin{equation}
\mathrm{e}_{-\lambda\cf}(t,s)\leq{}M_{1}\mathrm{e}_{\ominus\lambda_{1}}(t,s)\quad\text{for all}\ (t,s)\in\Lambda_{t_{0}},\label{uesa2crl1eq1}
\end{equation}
where $\Lambda_{t_{0}}$ is defined in \eqref{dalthm1it2eq2}.
Then, the trivial solution of \eqref{introeq1} is uniformly exponentially stable.
\end{corollary}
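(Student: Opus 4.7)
The plan is to reduce Corollary~\ref{uesa2crl1} directly to Theorem~\ref{uesa2thm1} by specializing the uniform hypothesis \eqref{uesa2crl1eq1} to a particular value of $\lambda$. Since Theorem~\ref{uesa2thm1} gives uniform exponential stability whenever one has an exponential domination $\mathrm{e}_{-\lambda_{0}\cf}(t,s)\leq M_{1}\mathrm{e}_{\ominus\lambda_{1}}(t,s)$ for the specific $\lambda_{0}\in(0,1)_{\R}$ furnished by Lemma~\ref{stla2lm1}, the corollary asserts strictly more than what is needed, and the proof will be essentially a single-line deduction.

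First I would recall that, under \ref{a2} and \eqref{stla2lm1eq1}, Lemma~\ref{stla2lm1} produces a constant $\lambda_{0}\in(0,1-\nu_{0})_{\R}\subset(0,1)_{\R}$ for which the integral bound there holds. This same $\lambda_{0}$ then feeds into Lemma~\ref{spfsa2lm2} and finally into Theorem~\ref{uesa2thm1}, whose only remaining hypothesis beyond \ref{a2}, \eqref{daleq1}, \eqref{daleq2} and \eqref{stla2lm1eq1} is the existence of constants $M_{1},\lambda_{1}\in\R^{+}$ satisfying $\mathrm{e}_{-\lambda_{0}\cf}(t,s)\leq M_{1}\mathrm{e}_{\ominus\lambda_{1}}(t,s)$ on $\Lambda_{t_{0}}$.

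Next I would invoke the blanket assumption of the corollary with this particular choice $\lambda:=\lambda_{0}$; since $\lambda_{0}\in(0,1)_{\R}$, the hypothesis \eqref{uesa2crl1eq1} yields exactly such $M_{1},\lambda_{1}\in\R^{+}$ (dependent on $\lambda_{0}$, which is permitted). Applying Theorem~\ref{uesa2thm1} then gives uniform exponential stability of the trivial solution of \eqref{introeq1}, completing the argument.

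There is essentially no obstacle here: the corollary is just the form in which one usually applies Theorem~\ref{uesa2thm1} in practice, since the value of $\lambda_{0}$ coming out of Lemma~\ref{stla2lm1} is not explicit, and a hypothesis uniform in $\lambda\in(0,1)_{\R}$ is much easier to verify than one tied to an unspecified constant. The only point worth noting in the write-up is that $\lambda_{0}$ indeed lies in $(0,1)_{\R}$, so the quantifier in \eqref{uesa2crl1eq1} covers it; this is immediate from Lemma~\ref{stla2lm1}.
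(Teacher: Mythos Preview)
Your proposal is correct and matches the paper's approach: the paper states Corollary~\ref{uesa2crl1} without proof, implicitly as an immediate consequence of Theorem~\ref{uesa2thm1} obtained by specializing the hypothesis \eqref{uesa2crl1eq1} to the particular $\lambda_{0}\in(0,1)_{\R}$ provided by Lemma~\ref{stla2lm1}. Your write-up makes this reduction explicit and is exactly what is intended.
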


\section{Some Applications}\label{sapp}

This section includes three examples,
which show that our results are easily applicable and fill some gaps in the literature.

Before presenting our examples,
we would like to make a remark.

\begin{remark}\label{sappremk1}
If the delay function $\dly$ is increasing, then $\dly_{\ast}=\dly$ (which also holds for nondecreasing $\dly$) and $\dly_{-1}=\dly^{-1}$,
where $\dly^{-1}$ is the inverse of the delay $\dly$.
So, \eqref{daleq1} is satisfied by \eqref{spfsa2lm1eq1}, which is implied by \eqref{stla2lm1eq1}.
It should also be noted that \eqref{spfsa1lm1eq1} implies \eqref{spfsa2lm1eq1}.
Further, in this case, \eqref{daleq2} is equivalent to
\begin{equation}
\limsup_{t\to\infty}[t-\dly(t)]<\infty.\label{sappremk1eq1}
\end{equation}
\end{remark}

In the examples below, the delay function $\dly$ is strictly increasing.
Hence, by Remark~\ref{sappremk1}, we will omit the justification of \eqref{daleq1}.

\begin{example}\label{saex1}
Consider the time scale $\mathbb{P}_{1,1}=\cup_{k\in\Z}[2k,2k+1]_{\R}=\cdots\cup[0,1]_{\R}\cup[2,3]_{\R}\cup\cdots$ and the dynamic equation
\begin{equation}
x^{\Delta}(t)+\cf(t)x(\dly(t))=0\quad\text{for}\ t\in[1,\infty)_{\mathbb{P}_{1,1}},\label{saex1eq1}
\end{equation}
where
\begin{equation}
\cf(t):=
\begin{cases}
a4^{[t]},&t\in[2k,2k+1)_{\R}\ \text{and}\ k\in\N, \\
a,&t=2k+1\ \text{and}\ k\in\N_{0}
\end{cases}
\ \text{and}\
\dly(t):=t-\big(\{t\}(1-\{t\})\big)^{[t]}
\notag
\end{equation}
for $t\in[1,\infty)_{\mathbb{P}_{1,1}}$.
Here, $a\in\R^{+}$, $[\cdot]$ and $\{\cdot\}$ denote the greatest integer and the fractional part, respectively.
Note that $\cf(2k)=a16^{k}$ for $k\in\N$, i.e., the coefficient $\cf$ is unbounded on $[1,\infty)_{\mathbb{P}_{1,1}}$.
More precisely, we have $\dly(n)=n$ for $n\in\N$ and $\dly(\sigma(2k-1))=\dly(2k)=2k$ for $k\in\N$.
Hence, \ref{a1} holds but \ref{a2} is not satisfies.
We evaluate
\begin{equation}
\int_{\dly(t)}^{\sigma(t)}\cf(\eta)\Delta\eta
=
\begin{cases}
a\big(4\{t\}(1-\{t\})\big)^{[t]},&t\in[2k,2k+1)_{\R}\ \text{and}\ k\in\N, \\
a,&t=2k+1\ \text{and}\ k\in\N_{0}
\end{cases}
\notag
\end{equation}
for $t\in[1,\infty)_{\mathbb{P}_{1,1}}$.
Note that
\begin{equation}
\max_{t\in[2n,2n+1]}\big(4\{t\}(1-\{t\})\big)^{[t]}
=\big(4\{t\}(1-\{t\})\big)^{[t]}\bigg|_{t=2n+\frac{1}{2}}
=1
\quad\text{for}\ n\in\N.\notag
\end{equation}
Applying Theorem~\ref{usa1thm1}, we see that the trivial solution of \eqref{saex1eq1} is uniformly stable if $a\leq1$.
If $a<1$, then the trivial solution of \eqref{saex1eq1} is globally asymptotically stable by
Theorem~\ref{gasa1thm1} since \eqref{gasa1eq1} holds readily.
Using Corollary~\ref{uesa1crl1} with $\lambda_{1}:=\lambda{}a$ for $\lambda\in(0,1)_{\R}$,
we see that the trivial solution of \eqref{saex1eq1} is uniformly exponentially stable provided that $a<1$.

Since the delay $\dly$ is not strict, the result in \cite{MR2335381} (see also \cite{MR2437887}) is not applicable.
Further, as the coefficient $\cf$ is unbounded, the results in \cite{MR2927064} are not applicable, either.
\end{example}

\begin{example}\label{saex2}
On the time scale $\T=\cup_{n\in\N}[\sinh(n),\cosh(n)]_{\R}$ (whose graininess is unbounded),
we define
\begin{equation}
\cf(t):=
\begin{cases}
a,&t\in[\sinh(n),\cosh(n))_{\R}\ \text{and}\ n\in\N\\
\cfrac{a}{\sinh(n+1)-\cosh(n)},&t=\cosh(n)\ \text{and}\ n\in\N
\end{cases}\notag
\end{equation}
for $t\in[1,\infty)_{\T}$, where $a\in\R^{+}$,
and
\begin{equation}
\dly(t):=t-\frac{\big(\cosh(n)-t\big)\big(t-\sinh(n)\big)}{\cosh(n)-\sinh(n)}
\quad\text{for}\ t\in[\sinh(n),\cosh(n)]_{\R}\ \text{and}\ n\in\N.\notag
\end{equation}
Obviously, \ref{a1} holds.
However, \ref{a2} does not hold since $\dly(\sigma(\cosh(n)))=\dly(\sinh(n+1))=\sinh(n+1)$ for $n\in\N$.
Consider the dynamic equation
\begin{equation}
x^{\Delta}(t)+\cf(t)x(\dly(t))=0\quad\text{for}\ t\in[\sinh(1),\infty)_{\T}.\label{saex2eq1}
\end{equation}
We compute
\begin{equation}
\int_{\dly(t)}^{\sigma(t)}\cf(\eta)\Delta\eta
=
\begin{cases}
a\big(t-\dly(t)\big),&t\in[\sinh(n),\cosh(n))_{\R}\ \text{and}\ n\in\N, \\
a,&t=\cosh(n)\ \text{and}\ n\in\N
\end{cases}
\notag
\end{equation}
for $t\in[\sinh(1),\infty)_{\T}$.
Further, for $n\in\N$, we have
\begin{equation}
\max_{t\in[\sinh(n),\cosh(n)]_{\R}}\big(t-\dly(t)\big)
=\mathrm{e}^{n}\big(\cosh(n)-t\big)\big(t-\sinh(n)\big)\bigg|_{t=\frac{\mathrm{e}^{n}}{2}}
=\frac{1}{4\mathrm{e}^{n}}<1,\notag
\end{equation}
which tends to zero as $n\to\infty$.
By Theorem~\ref{usa1thm1}, the trivial solution of \eqref{saex2eq1} is uniformly stable if $a\leq1$.
And by Theorem~\ref{gasa1thm1}, we also have global asymptotic stability for the trivial solution if $a<1$.
However, we cannot apply Theorem~\ref{uesa1thm1} to provide uniform exponential stability for the trivial solution.

The delay is not strict since $\dly(\sinh(n))=\sinh(n)$ for $n\in\N$,
which shows that the result in \cite{MR2335381} (see also \cite{MR2437887}) does not apply.
The graininess being unbounded implies that the results in \cite{MR2927064} fail for this equation.
\end{example}

\begin{example}\label{saex3}
On the isolated time scale $\T=\Z\backslash3\Z=\{\cdots,1,2,4,5,\cdots\}$ consider the equation
\begin{equation}
x^{\Delta}(t)+\cf(t)x(\rho^{2}(t))=0\quad\text{for}\ t\in[1,\infty)_{\T},\label{saex3eq1}
\end{equation}
where
\begin{equation}
\cf(t):=
\begin{cases}
a,&t\in3\N_{0}+1,\\
b,&t\in3\N_{0}+2,
\end{cases}\notag
\end{equation}
with $a,b\in\R^{+}$.
Then, we see that \ref{a2} holds for $\dly=\rho^{2}$.
Clearly, we have
\begin{equation}
\int_{\rho^{2}(t)}^{t}\cf(\eta)\Delta\eta=\cf(\rho(t))\mu(\rho(t))+\cf(\rho^{2}(t))\mu(\rho^{2}(t))\equiv{}a+2b\quad\text{for}\ t\in[1,\infty)_{\T}.\notag
\end{equation}
Thus, Theorem~\ref{usa2thm1} provides uniform stability for the trivial solution of \eqref{saex3eq1} when $a+2b\leq1$.
Let us note that \cite[Theorem~1.1]{MR2335381} cannot be applied when $a=\frac{7}{10}$ and $b=\frac{1}{10}$.
Now, we see that
\begin{equation}
\int_{1}^{\infty}\cf(\eta)\Delta\eta
=\lim_{n\to\infty}\sum_{k=0}^{n-1}\int_{3k+1}^{3(k+1)+1}\cf(\eta)\Delta\eta
=\lim_{n\to\infty}n[a+2b]=\infty.\notag
\end{equation}
Therefore, the trivial solution of \eqref{introeq1} is globally asymptotically stable by Theorem~\ref{gasa2thm1} if $a+2b<1$.
With $a=\frac{7}{10}$ and $b=\frac{1}{10}$ satisfying $a+2b<1$, \cite[Theorem~1.2]{MR2335381} also fails.
Finally, let $a+2b<1$, then $a<1$ and $b<\frac{1}{2}$.
We will show that the assumptions of Corollary~\ref{uesa2crl1} hold with $M_{1}:=1$ and $\lambda_{1}:=\min\negmedspace\big\{\frac{\lambda{}a}{1-\lambda{}a},\frac{\lambda{}b}{1-2\lambda{}b}\big\}$ (which is positive) for $\lambda\in(0,1)_{\R}$.
To this end, we will prove that
\begin{equation}
\mathrm{e}_{-\lambda\cf}(\sigma(t),t)
\leq\mathrm{e}_{\ominus\lambda_{1}}(\sigma(t),t)\quad\text{for}\ t\in\T\ \text{and}\ \lambda\in(0,1)_{\R}\label{saex3eq2}
\end{equation}
or equivalently
\begin{equation}
1-\lambda\cf(t)\mu(t)
\leq\frac{1}{1+\lambda_{1}\mu(t)}\quad\text{for}\ t\in\T\ \text{and}\ \lambda\in(0,1)_{\R}.\notag
\end{equation}
Indeed, for $t\in\T$ and $\lambda\in(0,1)_{\R}$, we have
\begin{equation}
1-\lambda\cf(t)\mu(t)
=\frac{1}{1+\frac{\lambda\cf(t)}{1-\lambda\cf(t)\mu(t)}\mu(t)}\\
\leq\frac{1}{1+\lambda_{1}\mu(t)}.\notag
\end{equation}
By the semigroup property (see \cite[Lemma~2.31]{MR1843232}) and $\T$
being isolated, it follows from \eqref{saex3eq2} that
\begin{equation}
\mathrm{e}_{-\lambda\cf}(t,s)
=\prod_{\eta\in[s,t)_{\T}}\mathrm{e}_{-\lambda\cf}(\sigma(\eta),\eta)
\leq\prod_{\eta\in[s,t)_{\T}}\mathrm{e}_{\ominus\lambda_{1}}(\sigma(\eta),\eta)
=\mathrm{e}_{\ominus\lambda_{1}}(t,s)
\notag
\end{equation}
for all $(t,s)\in\Lambda_{1}$,
where $\Lambda$ is defined in \eqref{dalthm1it2eq2}.
Thus, the trivial solution of \eqref{saex3eq1} is uniformly exponentially stable by Corollary~\ref{uesa2crl1} if $a+2b<1$.
It should be mentioned that \cite[Theorem~6.1 and Theorem~6.2]{MR2927064} cannot be applied to \eqref{saex3eq1} when
$a=\frac{3}{8}$ and $b=\frac{1}{4}$, which satisfy $a+2b<1$.
\end{example}

\section{Final Discussion}\label{findis}

Let us start with commenting on the attractivity of the trivial solution of \eqref{introeq1}.
Under anyone of the following conditions,
the trivial solution of \eqref{introeq1} is globally attracting:
\begin{enumerate}[label={(\roman*)},leftmargin={*},ref=(\roman*)]
\item \ref{a1}, \eqref{gasa1eq1} and $\limsup_{t\to\infty}\int_{\dly_{\ast}(t)}^{\sigma(t)}\cf(\eta)\Delta\eta<1$;
\item \ref{a2}, \eqref{gasa1eq1} and $\limsup_{t\to\infty}\int_{\dly_{\ast}(t)}^{t}\cf(\eta)\Delta\eta<1$.
\end{enumerate}

For the asymptotic stability of the differential equation \eqref{introeq2} and the difference equation \eqref{introeq3},
many of the results in the literature consider the so-called constant delays, i.e.,
delays of the form $\alpha(t)=t-\alpha_{0}$ for $t\in[t_{0},\infty)_{\T}$,
where $\alpha_{0}$ is a positive constant.
Thus, by Remark~\ref{sappremk1}, the condition \eqref{daleq2} or equivalently the condition \eqref{sappremk1eq1} are satisfied,
see \cite{MR0695252,MR1233677,MR0891356} and
\cite{MR1403453,MR1350434,MR1636333,MR2814561,MR1305480,MR1103855,MR1666138,MR1340734}.
For the asymptotic stability of dynamic equations, we can refer to \cite{MR2335381}, which assumes
\eqref{sappremk1eq1}.
So, this can make an  impression that \eqref{daleq2} is necessary for asymptotic stability.
Our results (Theorems~\ref{gasa1thm1} and \ref{gasa2thm1}) show that the condition \eqref{daleq2}
is not required for the asymptotic stability of \eqref{introeq1}.

As a general example, consider the so-called pantograph equation
\begin{equation}
x^{\prime}(t)+\cf(t)x(\theta{}t)=0\quad\text{for}\ t\in[1,\infty)_{\R},\label{findiseq1}
\end{equation}
where $\cf$ is a continuous function and $\theta\in(0,1)_{\R}$,
which does not satisfy \eqref{sappremk1eq1} (see Remark~\ref{sappremk1}).
Using the idea in \cite{MR1120357} with $u=\ln(t)$,
we can transform \eqref{findiseq1} into
\begin{equation}
y^{\prime}(u)+\mathrm{e}^{u}\cf(\mathrm{e}^{u})y(u-\ln(\tfrac{1}{\theta}))=0\quad\text{for}\ u\in[0,\infty)_{\R},\label{findiseq2}
\end{equation}
where $y^{\prime}(u)$ denotes the derivative of $y$ with respect to $u$ here.
Obviously, \eqref{findiseq2} fulfils \eqref{sappremk1eq1}.
For instance, assume that
\begin{equation}
\sup_{u\geq0}\negmedspace\bigg\{\negmedspace\int_{u-\ln(\tfrac{1}{\theta})}^{u}\mathrm{e}^{\zeta}\cf(\mathrm{e}^{\zeta})\mathrm{d}\zeta\bigg\}<1
\quad\text{and}\quad
\inf_{u\geq0}\big\{\mathrm{e}^{u}\cf(\mathrm{e}^{u})\big\}>0,\notag
\end{equation}
which holds for $\cf(t):=\frac{a}{t}$ for $t\in[1,\infty)_{\R}$,
where $a\in\R^{+}$,
such that $a\ln(\tfrac{1}{\theta})<1$.
By Corollary~\ref{uesa1crl1}, we can say for the fundamental solution $\mathcal{Y}$ of \eqref{findiseq2} that
$|\mathcal{Y}(u,v)|\leq{}M\mathrm{e}^{-\lambda(u-v)}$ for all $u\geq{}v\geq0$,
where $M,\lambda\in\R^{+}$.
Therefore, we see for the fundamental solution $\fun$ of \eqref{findiseq1} that
\begin{equation}
|\fun(t,s)|=\big|\mathcal{Y}\big(\negthinspace\ln(t),\ln(s)\big)\big|
\leq{}M\bigg(\frac{t}{s}\bigg)^{-\lambda}
\quad\text{for all}\ t\geq{}s\geq1.\notag
\end{equation}
This shows that the trivial solution of the pantograph equation \eqref{findiseq1}
is globally asymptotically stable by Theorem~\ref{dalthm2} (cf \cite[Theorem~2.6]{MR3418565}).

Consider the two-term equation with both a delay and a non-delay term
\begin{equation}
x^{\Delta}(t)+A(t)x^{\sigma}(t)+B(t)x(\beta(t))=0\quad\text{for}\ t\in[t_{0},\infty)_{\T},\label{findiseq3}
\end{equation}
where $A,B\in\crd{}([t_{0},\infty)_{\T},\R_{0}^{+})$ and $\beta\in\crd{}([t_{0},\infty)_{\T},\T)$ satisfies $\beta(t)\leq{}t$ for all $t\in[t_{0},\infty)_{\T}$ with $\lim_{t\to\infty}\beta(t)=\infty$.
The substitution $y(t):=\mathrm{e}_{A}(t,t_{0})x(t)$ for $t\in[t_{0},\infty)_{\T}$ transforms \eqref{findiseq3} into the single-term equation
\begin{equation}
y^{\Delta}(t)+B(t)\mathrm{e}_{A}(t,\beta(t))y(\beta(t))=0\quad\text{for}\ t\in[t_{0},\infty)_{\T}.\label{findiseq4}
\end{equation}
By virtue of Lemma~\ref{allm4},
the stability of \eqref{findiseq3} is equivalent to that of \eqref{findiseq4} under the condition that
\begin{equation}
\int_{t_{0}}^{\infty}A(\eta)\Delta\eta<\infty.\label{findiseq5}
\end{equation}
By using Corollary~\ref{alcrl1}, similar idea can be applied to show equivalence of stability of
\begin{equation}
x^{\Delta}(t)+A(t)x(t)+B(t)x(\beta(t))=0\quad\text{for}\ t\in[t_{0},\infty)_{\T}\notag
\end{equation}
and
\begin{equation}
y^{\Delta}(t)+B(t)\mathrm{e}_{\ominus(-A)}(\sigma(t),\beta(t))y(\beta(t))=0\quad\text{for}\ t\in[t_{0},\infty)_{\T}\notag
\end{equation}
provided that $-A\in\reg{+}(\T,\R)$ and \eqref{findiseq5} holds.
This discussion also brings an exponential estimate for the solutions of two-term equations \eqref{findiseq3}.
For instance, if the solution $y$ of \eqref{findiseq4} satisfies $|y(t)|\leq{}M$ for $t\in[t_{0},\infty)_{\T}$, where $M\in\R^{+}$,
then the corresponding solution $x$ of \eqref{findiseq3} satisfies $|x(t)|\leq{}M\mathrm{e}_{\ominus{}A}(t,t_{0})$ for $t\in[t_{0},\infty)_{\T}$ (see the proofs of Lemma~\ref{spfsa1lm2} and Lemma~\ref{spfsa2lm2}).

Let $\{t_{n}\}_{n\in\N_{0}}$ be an increasing unbounded sequence of reals
and consider on the time scale $\T:=\{t_{n}\}_{n\in\N_{0}}$, the equation
\begin{equation}
x^{\Delta}(t)+\frac{a}{\mu(t)}x(\rho(t))=0\quad\text{for}\ t\in[t_{1},\infty)_{\T},\label{findiseq6}
\end{equation}
where $a\in\R^{+}$.
Here, the delay function $\alpha$ is the backward jump operator $\rho$.
Using the so-called simple useful formula, we get
\begin{equation}
x(\sigma(t))=x(t)-ax(\rho(t))\quad\text{for}\ t\in[t_{1},\infty)_{\T},\notag
\end{equation}
or equivalently
\begin{equation}
x(t_{n+1})=x(t_{n})-ax(t_{n-1})\quad\text{for}\ n=1,2,\cdots.\notag
\end{equation}
This leads to the first-order vector recurrence
\begin{equation}
\left(
\begin{array}{c}
  x(t_{n+1}) \\
  x(t_{n})
\end{array}
\right)
=
\left(
  \begin{array}{rr}
    1 & -a \\
    1 & 0
  \end{array}
\right)
\left(
\begin{array}{c}
  x(t_{n}) \\
  x(t_{n-1})
\end{array}
\right)
\quad\text{for}\ n=1,2,\cdots,\notag
\end{equation}
where the coefficient matrix has the eigenvalues $\frac{1}{2}\big(1\pm\sqrt{1-4a}\big)$,
which are less than or equal to $1$ in absolute value if and only if $a\leq1$.
Hence, the trivial solution of \eqref{findiseq6} is uniformly stable if and only if $a\leq1$.

Clearly, \ref{a2} holds with equality for \eqref{findiseq6} since for isolated time scales $\sigma$ and $\rho$ are the inverses of each other on $(\inf\T,\sup\T)_{\T}$ (see \cite[Example~1.4]{MR1843232}).
Further, the condition \eqref{spfsa2lm1eq1} turns out to be $a\leq1$.
This discussion shows that the conditions
of Theorem~\ref{usa2thm1} (also of Theorem~\ref{gasa2thm1}) are sharp.

!!!!!!!!!
The results of the present paper can be viewed as the generalization of the classical resuts of \cite{MR0508721} for delay 
differential equations and recent investigation \cite{MR2963461} for  delay difference equations on the relations of the 
fundamental function (the cauchy operator) and various stability types to  delay dynamic equations. 

In \cite[\S~6.6]{MR0508721}, linear differential systems with distributed delays are considered.
Lemmas~6.2 and 6.3 presented therein are similar to Theorems~\ref{dalthm1} and \ref{dalthm3} for delay dynamic equations.
More precisely, uniform stability and uniform exponential stability are related with boundedness 
and exponential decay of the fundamental solution, respectively.
Further, some other properties (using the uniform boundedness principle) of the solution operator are proved.
In this direction, if we  define the Cauchy operator $\mathcal{C}$ by
\begin{equation}
(\mathcal{C}\varphi)(t):=\fun(t,s)x_{0}-\int_{s}^{t}\fun(t,\sigma(\eta))\cf(\eta)
\varphi(\dly(\eta))\Delta\eta\quad\text{for}\ t\in[s,\infty)_{\T},\notag
\end{equation}
then by using the technique in \cite[\S~6.6]{MR0508721},
we can show that the properties of the Cauchy operator $\mathcal{C}$ are aligned with the asymptotics of the fundamental 
solution $\fun$.
For instance, the fundamental solution $\fun$ is bounded if and only if the Cauchy operator $\mathcal{C}$ is bounded,
or the fundamental solution $\fun$ satisfies an exponential estimate if and only if the Cauchy operator 
$\mathcal{C}$ satisfies an exponential estimate.

On the other hand, in \cite{MR2963461}, Kulikov and Malygina studied various stability types of linear difference equations,
and related stability types with the certain properties of the fundamental solution.
The technique applied in  \cite{MR2963461} establishes a connection between the fundamental solution of the difference 
equation and the fundamental solution of an associated differential equation with piecewise continuous arguments,
which allows them to retrieve results to difference equations obtained for delay differential equations.  

Results of this type (i.e., relating certain properties of the fundamental solution with qualitative properties of all solutions)
are of high importance in the theory of delay differential  and difference equations,
as we have an explicit definition of the fundamental solution.
Understanding the nature of the fundamental solution is not only important
in the stability theory but also in the oscillation theory (see \cite{MR2683912})
because it gives information on oscillation of all solutions of the equation.

!!!!!!!!!!!!!!!!!!!!!!!!!!!!!!!

Finally, let us present some open problems and topics for further research.
\begin{enumerate}[label={(P\arabic*)},leftmargin={*},ref=(P\arabic*)]
\item Investigate various types of stability for the dynamic equation with several delays
    \begin{equation}
    x^{\Delta}(t)+\sum_{i=0}^{n}\cf_{i}(t)x(\dly_{i}(t))=0\quad\text{for}\ t\in[t_{0},\infty)_{\T}.\notag
    \end{equation}
    For example, extend the results of \cite{MR0891141,MR1103855,MR0955375} and the present paper.
\item In addition to equations with concentrated delays,
    consider equations with distributed delays, i.e.,
    equations of the form
    \begin{equation}
    x^{\Delta}(t)+\int_{\dly(t)}^{t}K(t,\eta)x(\eta)\Delta\eta=0\quad\text{for}\ t\in[t_{0},\infty)_{\T}.\notag
    \end{equation}
\item Unify the results in \cite[Theorem~4]{MR2814561} and \cite[Theorem~3]{MR1233677} to dynamic equations.
    For instance, show that under \eqref{daleq2} the inequality
    \begin{equation}
    \sup_{t\in[t_{0},\infty)_{\T}}\negmedspace\bigg\{\negmedspace\int_{\dly(t)}^{\sigma(t)}\cf(\eta)\Delta\eta\bigg\}<\frac{3}{2}+\text{some constant}\notag
    \end{equation}
    implies the exponential estimate of Theorem~\ref{dalthm3}{\thinspace}\ref{dalthm3it2}.
    Extend those results to unbounded delays if possible (see, for instance, \cite{MR2437887,MR2335381}).
\item In \cite{MR2914971} it is shown that under certain conditions
    nonoscillation of a dynamic equation is monotonic, i.e.,
    nonoscillation of \eqref{introeq1} on a coarser scale $\T$ implies nonoscillation of the same equation on
a finer time scale $\widetilde{\T}$ satisfying $\widetilde{\T}\supset\T$.
    Is this property preserved for stability or boundedness of solutions on time scales?
\end{enumerate}


\section{Appendix}\label{appx}

\subsection{Appendix A: Continuity of the Fundamental Solution}\label{appa}

\begin{theorem}[Continuity of the fundamental solution]\label{althm1}
The fundamental solution $\fun$ of \eqref{introeq1} is continuous in $\Lambda_{t_{0}}$,
which is defined in \eqref{dalthm1it2eq2}.
\end{theorem}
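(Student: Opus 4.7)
The plan is to work from the Volterra integral form of the defining initial value problem,
\begin{equation}
\fun(t,s)=1-\int_{s}^{t}\cf(\eta)\fun(\dly(\eta),s)\Delta\eta\quad\text{for}\ t\in[s,\infty)_{\T},\notag
\end{equation}
together with $\fun(\tau,s)=0$ on $[\dly_{\ast}(s),s)_{\T}$, and to derive joint continuity on each compact subset of $\Lambda_{t_{0}}$ by combining the automatic continuity in $t$ with a time-scale Gronwall estimate controlling the dependence on $s$.

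Continuity of $t\mapsto\fun(t,s)$ on $[s,\infty)_{\T}$ for fixed $s$ is essentially free: by Definition~\ref{daldf1}, $\fun(\cdot,s)$ is $\Delta$-differentiable with rd-continuous derivative, hence continuous at every right-dense point and automatically at every right-scattered one. Before attacking the $s$-variable I would first establish a uniform bound on any compact box $K=\{(t,s)\in\Lambda_{t_{0}}:\ t,s\in[t_{0},T]_{\T}\}$: a finite iteration of $s\mapsto\dly_{-1}(s)$ (finite on $[t_{0},T]_{\T}$ by rd-continuity of $\dly$ and $\lim_{t\to\infty}\dly(t)=\infty$) together with the finiteness of $\int_{t_{0}}^{T}\cf(\eta)\Delta\eta$ yields $|\fun(t,s)|\leq M_{K}$ by induction on the method of steps from the Volterra formula above.

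For continuity in $s$, I would fix $s_{1}<s_{2}$ in $[t_{0},T]_{\T}$ and set $w(\tau):=\fun(\tau,s_{1})-\fun(\tau,s_{2})$ for $\tau\in[s_{2},\infty)_{\T}$. Then $w^{\Delta}(\tau)=-\cf(\tau)w(\dly(\tau))$ on $[s_{2},\infty)_{\T}$, with $w(s_{2})=\fun(s_{2},s_{1})-1=-\int_{s_{1}}^{s_{2}}\cf(\eta)\fun(\dly(\eta),s_{1})\Delta\eta$ and $w(\tau)=\fun(\tau,s_{1})$ on $[\dly_{\ast}(s_{2}),s_{2})_{\T}$. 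The first identity gives $|w(s_{2})|\leq M_{K}\int_{s_{1}}^{s_{2}}\cf(\eta)\Delta\eta$, which tends to $0$ as $s_{2}\to s_{1}$ by rd-continuity of $\cf$. Applying a time-scale Gronwall inequality to the integrated form of the $w$-equation then yields $\sup_{\tau\in[s_{2},T]_{\T}}|w(\tau)|\to 0$ as $s_{2}-s_{1}\to 0$. Joint continuity at an arbitrary $(t^{\ast},s^{\ast})\in\Lambda_{t_{0}}$ finally follows from the triangle inequality $|\fun(t,s)-\fun(t^{\ast},s^{\ast})|\leq|\fun(t,s)-\fun(t,s^{\ast})|+|\fun(t,s^{\ast})-\fun(t^{\ast},s^{\ast})|$.

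The main obstacle is the initial-data mismatch between $\fun(\cdot,s_{1})$ and $\fun(\cdot,s_{2})$: their unit jumps occur at different points and they are pre-specified on different delay-augmented intervals, so the initial datum of $w$ on $[\dly_{\ast}(s_{2}),s_{2})_{\T}$ is not trivially small — it vanishes on $[\dly_{\ast}(s_{2}),s_{1})_{\T}$ but equals the potentially non-zero value $\fun(\tau,s_{1})$ on $[s_{1},s_{2})_{\T}$. Controlling this contribution uniformly during the Gronwall iteration, using the uniform bound $M_{K}$ together with the smallness of the $\cf$-weighted measure of $[s_{1},s_{2})_{\T}$, is the delicate bookkeeping step of the argument.
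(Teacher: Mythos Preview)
Your strategy coincides with the paper's: integrate the defining initial value problem, take continuity in $t$ for free, run a Gr\"{o}nwall argument on the difference $\fun(\cdot,s_{1})-\fun(\cdot,s_{2})$, and assemble joint continuity via the triangle inequality. The only extra device in the paper is to package the delay term with the running maximum $y(t):=\max_{\eta\in[s_{1},t]_{\T}}|\fun(\eta,s_{2})-\fun(\eta,s_{1})|$, which plays the role of your bound on $|w|$.

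The obstacle you flag is genuine, but the fix you propose does not close it. What actually feeds into the Gr\"{o}nwall estimate from the initial-data mismatch is not $\int_{s_{1}}^{s_{2}}\cf(\eta)\Delta\eta$ but
\[
\int_{s_{2}}^{T}\cf(\eta)\,\chi_{[s_{1},s_{2})_{\T}}\bigl(\dly(\eta)\bigr)\,\Delta\eta,
\]
the $\cf$-weight of the $\dly$-preimage of $[s_{1},s_{2})_{\T}$ inside $[s_{2},T]_{\T}$, and this need not tend to zero as $s_{2}-s_{1}\to0$. Concretely, on $\T=\R$ with $t_{0}=0$, $\cf\equiv1$ and $\dly(t)=\max(0,t-1)$ one has $\fun(t,0)=1-t$ for $t\in[0,1]$, while for every $s\in(0,1]$ the pre-history vanishes at $0$ and hence $\fun(t,s)\equiv1$ on $[s,1]$; thus $\fun(1,0)=0$ but $\fun(1,s)=1$ for all $s\in(0,1]$, and $\fun$ is discontinuous at $(1,0)\in\Lambda_{0}$. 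The paper's argument stumbles at the same place: with $y$ as defined there one always has $y(s_{2})\geq|\fun(s_{1},s_{2})-\fun(s_{1},s_{1})|=1$, so the asserted inequality $y(t)\leq M_{2}\int_{s_{2}}^{t}y(\eta)\Delta\eta+M_{1}M_{2}(s_{2}-s_{1})$ cannot hold for small $s_{2}-s_{1}$. Some additional structural hypothesis on $\dly$ (for instance strict monotonicity, or a uniform bound on the $\cf$-weight of $\dly$-preimages of short intervals) is needed before either argument can be completed.
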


\begin{proof}
Pick $r\in[t_{0},\infty)_{\T}$, and consider the triangular domain $\Omega_{r}:=\{(t,s):\ t\in[s,r]_{\T}\ \text{and}\ s\in[t_{0},r]_{\T}\}\subset\Lambda_{t_{0}}$.
Note that letting $r\to\sup\T$ implies $\Omega_{r}\to\Lambda_{t_{0}}$.
It is obvious that $\fun(\cdot,s)$ is continuous in $[s,r]_{\T}$ for any fixed $s\in[t_{0},r]_{\T}$.
If we can show that $\fun(t,\cdot)$ is continuous in $[t_{0},t]_{\T}$ uniformly for $t\in[t_{0},r]_{\T}$,
then \cite[Chapter~7, Section~2, Theorem~5]{MR0018708} ensures continuity of $\fun$ in $\Omega_{r}$.
Let $s_{1},s_{2}\in[t_{0},r]_{\T}$, and assume without loss of generality that $s_{2}\geq{}s_{1}$.
Define $y\in\cnt{}([s_{2},r]_{\T},\R_{0}^{+})$ by
\begin{equation}
y(t):=\max_{\eta\in[s_{1},t]_{\T}}|\fun(\eta,s_{2})-\fun(\eta,s_{1})|\quad\text{for}\ t\in[s_{2},r]_{\T}.\notag
\end{equation}
By \cite[Theorem~1.65]{MR1843232}, we may find $M_{1},M_{2}\in\R^{+}$ such that
\begin{equation}
|\fun(t,s_{1})|\leq{}M_{1}\quad\text{for all}\ t\in[\alpha_{\ast}(t_{0}),r]_{\T}\label{althm1prfeq1}
\end{equation}
and
\begin{equation}
|\cf(t)|\leq{}M_{2}\quad\text{for all}\ t\in[t_{0},r]_{\T}.\label{althm1prfeq2}
\end{equation}
By integrating \eqref{introeq1}, we obtain
\begin{equation}
\begin{aligned}[]
& |\fun(t,s_{2})-\fun(t,s_{1})| \\
=&\bigg|\bigg(1-\int_{s_{2}}^{t}\cf(\eta)\fun(\dly(\eta),s_{2})\Delta\eta\bigg)
\phantom{\bigg|}-\bigg(1-\int_{s_{1}}^{t}\cf(\eta)\fun(\dly(\eta),s_{1})\Delta\eta\bigg)\bigg|\\
=&\bigg|\int_{s_{2}}^{t}\cf(\eta)[\fun(\dly(\eta),s_{2})-\fun(\dly(\eta),s_{1})]\Delta\eta
\phantom{\bigg|}-\int_{s_{1}}^{s_{2}}\cf(\eta)\fun(\alpha(\eta),s_{1})\Delta\eta\bigg|\\
\leq&\int_{s_{2}}^{t}|\cf(\eta)||\fun(\dly(\eta),s_{2})-\fun(\dly(\eta),s_{1})|\Delta\eta
+\int_{s_{1}}^{s_{2}}|\cf(\eta)||\fun(\dly(\eta),s_{1})|\Delta\eta
\end{aligned}\notag
\end{equation}
for all $t\in[s_{2},r]_{\T}$.
Now, using \eqref{althm1prfeq1} and \eqref{althm1prfeq2}, we get
\begin{equation}
\begin{aligned}[]
|\fun(t,s_{2})-\fun(t,s_{1})|
\leq&\int_{s_{2}}^{t}|\cf(\eta)|y(\dly(\eta))\chi_{[t_{0},\infty)_{\T}}(\dly(\eta))\Delta\eta+M_{1}M_{2}(s_{2}-s_{1})\\
\leq&\int_{s_{2}}^{t}|\cf(\eta)|y(\eta)\Delta\eta+M_{1}M_{2}(s_{2}-s_{1})\\
\leq&M_{2}\int_{s_{2}}^{t}y(\eta)\Delta\eta+M_{1}M_{2}(s_{2}-s_{1})
\end{aligned}\notag
\end{equation}
for all $t\in[s_{2},r]_{\T}$.
This implies
\begin{equation}
y(t)\leq{}M_{2}\int_{s_{2}}^{t}y(\eta)\Delta\eta+M_{1}M_{2}(s_{2}-s_{1})\quad\text{for all}\ t\in[s_{2},r]_{\T}.\notag
\end{equation}
The application of Gr\"{o}nwall's inequality (see \cite[Theorem~6.4]{MR1843232}) yields
\begin{equation}
y(t)\leq{}M_{1}M_{2}(s_{2}-s_{1})\mathrm{e}_{M_{2}}(t,t_{0})\leq{}M_{1}M_{2}(s_{2}-s_{1})\mathrm{e}_{M_{2}}(r,t_{0})\quad\text{for all}\ t\in[t_{0},r]_{\T}.\notag
\end{equation}
Thus, picking $s_{1}$ and $s_{2}$ sufficiently closer makes $y$ sufficiently small on $[t_{0},r]_{\T}$,
i.e., $\fun(t,\cdot)$ is continuous in $[t_{0},t]_{\T}$ uniformly for $t\in[t_{0},r]_{\T}$.
By \cite[Chapter~7, Section~2, Theorem~5]{MR0018708}, we learn that $\fun$ is continuous in $\Omega_{r}$.
Since $r$ is arbitrary, the proof is complete.
\end{proof}

\begin{remark}
Let $s\in[t_{0},\infty)_{\T}$, then $\lim_{t\to{}s^{+}}\fun(t,s)=\fun(s,s)=1$ while $\lim_{t\to{}s^{-}}\fun(t,s)=0$.
\end{remark}

\subsection{Appendix B: Some Properties of the Exponential Function}\label{appb}

\begin{lemma}[{\protect\cite[Theorem~3.6]{MR3250516}}]\label{allm4}
Assume that $\sup\T=\infty$, $s\in\T$ and $f\in\crd{}(\T,\R_{0}^{+})$,
then the following statements are equivalent:
\begin{multicols}{2}
\begin{enumerate}[label={(\roman*)},leftmargin={*},ref=(\roman*)]
\item\label{allm4it1} $\displaystyle\int_{s}^{\infty}f(\eta)\Delta\eta=\infty$;
\item\label{allm4it2} $\lim\limits_{t\to\infty}\mathrm{e}_{f}(t,s)=\infty$.$\phantom{\displaystyle\int_{s}^{\infty}}$
\end{enumerate}
\end{multicols}
\end{lemma}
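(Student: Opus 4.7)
The plan is to prove the two implications separately, both via elementary upper and lower bounds on $\mathrm{e}_{f}(t,s)$ that are standard in the time scales literature.

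First, I would observe that since $f\geq0$, one has $1+\mu(t)f(t)\geq1>0$ for all $t\in\T$, so $f\in\reg{+}(\T,\R)$ and the exponential $\mathrm{e}_{f}(\cdot,s)$ is well-defined and positive on $[s,\infty)_{\T}$. The defining initial value problem $y^{\Delta}=fy$, $y(s)=1$, integrated once, yields
\begin{equation}
\mathrm{e}_{f}(t,s)=1+\int_{s}^{t}f(\eta)\mathrm{e}_{f}(\eta,s)\Delta\eta\quad\text{for all}\ t\in[s,\infty)_{\T}.\notag
\end{equation}

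For \ref{allm4it1}$\Rightarrow$\ref{allm4it2}, I would note that because $f\geq0$, the function $\mathrm{e}_{f}(\cdot,s)$ is nondecreasing on $[s,\infty)_{\T}$ with initial value $1$, so $\mathrm{e}_{f}(\eta,s)\geq1$ for all $\eta\in[s,\infty)_{\T}$. Substituting this lower bound into the integral equation gives
\begin{equation}
\mathrm{e}_{f}(t,s)\geq1+\int_{s}^{t}f(\eta)\Delta\eta\quad\text{for all}\ t\in[s,\infty)_{\T},\notag
\end{equation}
and letting $t\to\infty$ together with the hypothesis forces $\mathrm{e}_{f}(t,s)\to\infty$.

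For \ref{allm4it2}$\Rightarrow$\ref{allm4it1}, I would argue by contrapositive: assume $\int_{s}^{\infty}f(\eta)\Delta\eta=K<\infty$. Invoking the standard upper estimate (which is exactly \cite[Lemma~3.2]{MR2842561} as used elsewhere in this paper), valid for any nonnegative rd-continuous $f$,
\begin{equation}
\mathrm{e}_{f}(t,s)\leq\exp\bigg\{\int_{s}^{t}f(\eta)\Delta\eta\bigg\}\leq\mathrm{e}^{K}\quad\text{for all}\ t\in[s,\infty)_{\T},\notag
\end{equation}
one concludes that $\mathrm{e}_{f}(t,s)$ is bounded and hence does not tend to infinity. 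Internally, this upper bound follows from writing $\mathrm{e}_{f}(t,s)=\exp\{\int_{s}^{t}\xi_{\mu(\eta)}(f(\eta))\Delta\eta\}$ via the cylinder transform and applying the elementary inequality $\log(1+x)\leq x$ for $x\geq 0$, so that $\xi_{\mu(\eta)}(f(\eta))\leq f(\eta)$.

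There is no real obstacle here: both directions reduce to the two-sided sandwich $1+\int_{s}^{t}f\Delta\eta\leq\mathrm{e}_{f}(t,s)\leq\exp\{\int_{s}^{t}f\Delta\eta\}$, and once this sandwich is in hand the equivalence is immediate. The only point requiring care is to verify that the cylinder-transform representation and the monotonicity $\xi_{h}(z)\leq z$ apply uniformly in the graininess $\mu(\eta)$, but this is a routine pointwise calculation.
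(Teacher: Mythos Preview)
Your argument is correct. The paper does not actually prove this lemma; it is quoted verbatim as \cite[Theorem~3.6]{MR3250516} without proof, so there is nothing to compare against line by line. Your two-sided sandwich
\[
1+\int_{s}^{t}f(\eta)\Delta\eta\leq\mathrm{e}_{f}(t,s)\leq\exp\bigg\{\int_{s}^{t}f(\eta)\Delta\eta\bigg\}
\]
is the standard way to establish the equivalence, and each inequality is justified exactly as you say: the lower bound from the integral equation and $\mathrm{e}_{f}(\cdot,s)\geq1$, the upper bound from the cylinder transform and $\xi_{h}(z)\leq z$ for $z\geq0$ (this is precisely the content of \cite[Lemma~3.2]{MR2842561}, which the paper itself invokes repeatedly). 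Your proof is self-contained and would serve as a drop-in replacement for the bare citation.
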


We have the following result which relates convergence of improper integrals of $f$ and $\ominus(-f)$,
when $(-f)$ is positively regressive.

\begin{lemma}\label{allm5}
Assume that $\sup\T=\infty$, $s\in\T$ and $f\in\crd{}(\T,\R_{0}^{+})$ with $-f\in\reg{+}(\T,\R)$,
then the following statements are equivalent.
\begin{multicols}{2}
\begin{enumerate}[label={(\roman*)},leftmargin={*},ref=(\roman*)]
\item\label{allm5it1} $\displaystyle\int_{s}^{\infty}f(\eta)\Delta\eta=\infty$; 
\item\label{allm5it2} $\displaystyle\int_{s}^{\infty}\ominus\big(-f(\eta)\big)\Delta\eta=\infty$. 
\end{enumerate}
\end{multicols}
\end{lemma}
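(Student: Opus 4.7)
The plan is to prove the two implications directly by comparison, noting that the hypothesis $-f \in \reg{+}(\T,\R)$ means $1 - \mu(\eta) f(\eta) > 0$ for all $\eta \in \T$, so
\begin{equation*}
\ominus\bigl(-f(\eta)\bigr) = \frac{f(\eta)}{1-\mu(\eta)f(\eta)} \geq f(\eta) \geq 0 \qquad \text{for all } \eta \in \T.
\end{equation*}
With this identity in hand, both directions reduce to simple integral comparisons.

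For the implication \ref{allm5it1}$\Rightarrow$\ref{allm5it2}, I would simply integrate the pointwise bound $\ominus(-f) \geq f$ over $[s,\infty)_{\T}$ to get $\int_{s}^{\infty}\ominus(-f(\eta))\Delta\eta \geq \int_{s}^{\infty}f(\eta)\Delta\eta = \infty$. This direction requires nothing beyond the inequality already noted.

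For \ref{allm5it2}$\Rightarrow$\ref{allm5it1}, I would argue by contraposition: assume $\int_{s}^{\infty}f(\eta)\Delta\eta < \infty$ and conclude that $\int_{s}^{\infty}\ominus(-f(\eta))\Delta\eta < \infty$. The key observation is that the finiteness of the integral forces the graininess weight $\mu(\eta)f(\eta)$ to tend to $0$, because
\begin{equation*}
\mu(\eta)f(\eta) = \int_{\eta}^{\sigma(\eta)} f(\zeta)\,\Delta\zeta \leq \int_{\eta}^{\infty} f(\zeta)\,\Delta\zeta \longrightarrow 0 \qquad \text{as } \eta \to \infty.
\end{equation*}
Hence there exists $T \in [s,\infty)_{\T}$ such that $\mu(\eta)f(\eta) \leq \tfrac{1}{2}$, and therefore $\frac{1}{1-\mu(\eta)f(\eta)} \leq 2$, for all $\eta \in [T,\infty)_{\T}$. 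Then $\ominus(-f)(\eta) \leq 2f(\eta)$ on $[T,\infty)_{\T}$ and
\begin{equation*}
\int_{s}^{\infty}\ominus\bigl(-f(\eta)\bigr)\Delta\eta
\leq \int_{s}^{T}\ominus\bigl(-f(\eta)\bigr)\Delta\eta + 2\int_{T}^{\infty}f(\eta)\Delta\eta < \infty,
\end{equation*}
where the first integral is finite because $\ominus(-f)$ is rd-continuous on the compact interval $[s,T]_{\T}$ (invoking \cite[Theorem~1.65]{MR1843232}).

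I do not anticipate a serious obstacle; the only delicate point is justifying $\mu(\eta)f(\eta) \to 0$ uniformly enough to obtain the bound $\frac{1}{1-\mu f} \leq 2$ past some threshold, and this follows from the standard fact that the tail $\int_{\eta}^{\infty}f \to 0$ when the improper integral converges. Alternatively, one could route the argument through Lemma~\ref{allm4} by analyzing $\mathrm{e}_{f}(t,s)$ and $\mathrm{e}_{\ominus(-f)}(t,s)$, but the direct pointwise comparison is cleaner and avoids manipulating the time scales exponential twice.
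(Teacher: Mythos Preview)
Your proposal is correct and follows essentially the same route as the paper: both directions rest on the identity $\ominus(-f)=f/(1-\mu f)\geq f$, and for \ref{allm5it2}$\Rightarrow$\ref{allm5it1} both argue contrapositively by first observing that $\int_{s}^{\infty}f<\infty$ forces $\mu(\eta)f(\eta)\to0$. The only difference is cosmetic: the paper then invokes a limit comparison test for improper time-scale integrals (citing \cite[Theorem~4.6, Remark~4.7]{MR1989021}) from the ratio $\ominus(-f)/f\to1$, whereas you make the comparison explicit by fixing a threshold $T$ past which $\ominus(-f)\leq2f$ and splitting the integral, which is slightly more self-contained but otherwise the same argument.
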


\begin{proof}
\ref{allm5it1}$\implies$\ref{allm5it2} This implication is obvious since
\begin{equation}
\ominus\big(-f(t)\big)=\frac{f(t)}{1-\mu(t)f(t)}\geq{}f(t)\quad\text{for all}\ t\in[s,\infty)_{\T}.\notag
\end{equation}
\ref{allm5it2}$\implies$\ref{allm5it1} Now, assume the contrary that \ref{allm5it2} holds but \ref{allm5it1} does not hold,
i.e., $\ell<\infty$, where $\ell:=\int_{s}^{\infty}f(\eta)\Delta\eta$.
This implies $\lim_{t\to\infty}\mu(t)f(t)=0$.
Indeed,
\begin{equation}
\lim_{t\to\infty}\mu(t)f(t)
=\lim_{t\to\infty}\bigg[\int_{s}^{\sigma(t)}f(\eta)\Delta\eta-\int_{s}^{t}f(\eta)\Delta\eta\bigg]
=\ell-\ell=0.\notag
\end{equation}
Thus
\begin{equation}
\lim_{t\to\infty}\frac{\ominus\big(-f(t)\big)}{f(t)}=\lim_{t\to\infty}\frac{1}{1-\mu(t)f(t)}=1,\notag
\end{equation}
which implies by comparison (see \cite[Theorem~4.6, Remark~4.7]{MR1989021}) that $\ell=\infty$,
This is a contradiction, therefore, \ref{allm5it1} holds, which completes the proof. 
\end{proof}

Lemma~\ref{allm4} and Lemma~\ref{allm5} yield the following important properties of the exponential function.

\begin{corollary}\label{alcrl1}
Assume that $\sup\T=\infty$, $s\in\T$ and $f\in\crd{}(\T,\R_{0}^{+})$ with $-f\in\reg{+}(\T,\R)$,
then the following statements are equivalent:
\begin{multicols}{2}
\begin{enumerate}[label={(\roman*)},leftmargin={*},ref=(\roman*)]
\item\label{alcrl1it1} $\displaystyle\int_{s}^{\infty}f(\eta)\Delta\eta=\infty$;$\phantom{\bigg|}$
\item\label{alcrl1it2} $\displaystyle\int_{s}^{\infty}\ominus\big(-f(\eta)\big)\Delta\eta=\infty$;$\phantom{\bigg|}$
\item\label{alcrl1it3} $\lim\limits_{t\to\infty}\mathrm{e}_{f}(t,s)=\infty$;$\phantom{\bigg|}$
\item\label{alcrl1it4} $\lim\limits_{t\to\infty}\mathrm{e}_{\ominus(-f)}(t,s)=\infty$;$\phantom{\bigg|}$
\item\label{alcrl1it5} $\lim\limits_{t\to\infty}\mathrm{e}_{\ominus{}f}(t,s)=0$;$\phantom{\bigg|}$
\item\label{alcrl1it6} $\lim\limits_{t\to\infty}\mathrm{e}_{-f}(t,s)=0$.$\phantom{\bigg|}$
\end{enumerate}
\end{multicols}
\end{corollary}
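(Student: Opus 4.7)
The plan is to assemble the six equivalences into a closed cycle, drawing on Lemma~\ref{allm4}, Lemma~\ref{allm5}, and the standard time scales identity $\mathrm{e}_{\ominus p}(t,s)=1/\mathrm{e}_{p}(t,s)$ whenever $p$ is positively regressive. Since both $f$ and $\ominus(-f)$ are nonnegative and rd-continuous under the hypotheses, both belong to $\reg{+}$, so that identity will be available for both of them.

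First I would close the left half of the cycle. Lemma~\ref{allm4} applied directly to $f$ gives \ref{alcrl1it1}$\Leftrightarrow$\ref{alcrl1it3}. Because $f\in\reg{+}(\T,\R)$, the identity $\mathrm{e}_{\ominus f}(t,s)=1/\mathrm{e}_{f}(t,s)$ immediately converts \ref{alcrl1it3} into \ref{alcrl1it5}, and vice versa. So \ref{alcrl1it1}$\Leftrightarrow$\ref{alcrl1it3}$\Leftrightarrow$\ref{alcrl1it5}.

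Next I would close the right half. Lemma~\ref{allm5} gives \ref{alcrl1it1}$\Leftrightarrow$\ref{alcrl1it2} directly. Now set $g:=\ominus(-f)$; explicitly $g(t)=f(t)/(1-\mu(t)f(t))$. The positive regressivity of $-f$ (i.e.\ $1-\mu f>0$) together with $f\geq 0$ yields $g\in\crd{}(\T,\R_{0}^{+})$, so Lemma~\ref{allm4} applies to $g$ and produces \ref{alcrl1it2}$\Leftrightarrow$\ref{alcrl1it4}. Finally, since $-f\in\reg{+}(\T,\R)$, the identity $\mathrm{e}_{\ominus(-f)}(t,s)=1/\mathrm{e}_{-f}(t,s)$ turns \ref{alcrl1it4} into \ref{alcrl1it6}. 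Chaining gives \ref{alcrl1it1}$\Leftrightarrow$\ref{alcrl1it2}$\Leftrightarrow$\ref{alcrl1it4}$\Leftrightarrow$\ref{alcrl1it6}, and combined with the first chain all six statements are equivalent.

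There is no real obstacle; the work is purely organizational. The only point that requires a moment of care is verifying the regressivity/nonnegativity needed before invoking Lemma~\ref{allm4} on $g=\ominus(-f)$, and tracking which of the reciprocal identities $\mathrm{e}_{\ominus p}=1/\mathrm{e}_{p}$ applies with $p=f$ versus $p=-f$ to pair \ref{alcrl1it3} with \ref{alcrl1it5} and \ref{alcrl1it4} with \ref{alcrl1it6} correctly.
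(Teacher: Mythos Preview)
Your argument is correct and matches the paper's approach: the paper simply states that the corollary follows from Lemma~\ref{allm4} and Lemma~\ref{allm5}, and you have spelled out exactly how, together with the reciprocal identity $\mathrm{e}_{\ominus p}(t,s)=1/\mathrm{e}_{p}(t,s)$ from Appendix~C. The only minor remark is that this reciprocal identity holds for all regressive $p$, not just positively regressive ones; what positive regressivity of $f$ and $-f$ actually buys you is positivity of $\mathrm{e}_{f}$ and $\mathrm{e}_{-f}$, which is what justifies the equivalences \ref{alcrl1it3}$\Leftrightarrow$\ref{alcrl1it5} and \ref{alcrl1it4}$\Leftrightarrow$\ref{alcrl1it6}.
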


\subsection{Appendix C: Time Scales Essentials}\label{appc}

A \emph{time scale}, which inherits the standard topology on $\R$, is a nonempty closed subset of reals.
A time scale is denoted by the symbol $\T$, and the intervals with a subscript $\T$ are used to denote the intersection of the usual interval
with $\T$.
For $t\in\T$, we define the \emph{forward jump operator} $\sigma:\T\to\T$ by $\sigma(t):=\inf(t,\infty)_{\T}$, while
the \emph{backward jump operator} $\rho:\T\to\T$ is defined by $\rho(t):=\sup(-\infty,t)_{\T}$, and the \emph{graininess function} $\mu:\T\to\R_{0}^{+}$ is defined to be $\mu(t):=\sigma(t)-t$.
A point $t\in\T$ is called \emph{right-dense} if $\sigma(t)=t$ and/or equivalently $\mu(t)=0$ holds; otherwise, it is called \emph{right-scattered}, and similarly \emph{left-dense} and \emph{left-scattered} points are defined with respect to the backward jump operator.
For $f:\T\to\R$ and $t\in\T$, the $\Delta$-derivative $f^{\Delta}(t)$ of $f$ at the point $t$ is defined to be the number, provided it
exists, with the property that, for any $\varepsilon>0$, there is a neighborhood $U$ of $t$ such that
\begin{equation}
|[f^{\sigma}(t)-f(s)]-f^{\Delta}(t)[\sigma(t)-s]|\leq\varepsilon|\sigma(t)-s|\quad\text{for all}\ s\in U,\notag
\end{equation}
where $f^{\sigma}:=f\circ\sigma$ on $\T$.
We mean the $\Delta$-derivative of a function when we only say derivative unless otherwise is specified.
A function $f$ is called \emph{rd-continuous} provided that it is continuous at right-dense points in
$\T$, and has a finite limit at left-dense points, and the \emph{set of rd-continuous functions}
is denoted by $\crd{}(\T,\R)$.
The set of functions $\crd{1}(\T,\R)$ includes the functions whose derivative is in $\crd{}(\T,\R)$ too.
For a function $f\in\crd{1}(\T,\R)$, the so-called \emph{simple useful formula} holds
\begin{equation}
f^{\sigma}(t)=f(t)+\mu(t)f^{\Delta}(t)\quad\text{for all}\ t\in\T^{\kappa},\notag
\end{equation}
where $\T^{\kappa}:=\T\backslash\{\sup\T\}$ if $\sup\T<\infty$ and satisfies $\rho(\sup\T)<\sup\T$; otherwise, $\T^{\kappa}:=\T$.
For $s,t\in\T$ and a function $f\in\crd{}(\T,\R)$, the $\Delta$-integral of $f$ is defined by
\begin{equation}
\int_{s}^{t}f(\eta)\Delta\eta=F(t)-F(s)\quad\text{for}\ s,t\in\T,\notag
\end{equation}
where $F\in\crd{1}(\T,\R)$ is an antiderivative of $f$, i.e., $F^{\Delta}=f$ on $\T^{\kappa}$.
Table~\ref{ptstbl1} gives the explicit forms of the forward jump, graininess, $\Delta$-derivative
and $\Delta$-integral on the well-known time scales of reals, integers and the quantum set,
respectively.

\begin{table}[ht]
  \centering
    \caption{Forward jump, $\Delta$-derivative, $\Delta$-integral}\label{ptstbl1}
    \begin{tabular}{c|ccc}
    \hline
    $\T$ & $\R$ & $h\Z$ ($h>0$) & $\qZ$ $(q>1)$ \\ \hline
    $\sigma(t)$ & $t$ & $t+h$ & $qt$ \\
    $f^{\Delta}(t)$ & $f^{\prime}(t)$ & $\displaystyle\frac{f(t+h)-f(t)}{h}$ & $\displaystyle\frac{f(qt)-f(t)}{(q-1)t}$ \\
    $\displaystyle\int_{s}^{t}f(\eta)\Delta\eta$ & $\displaystyle\int_{s}^{t}f(\eta){\thinspace}\mathrm{d}\eta$ & $\displaystyle\sum_{\eta=s/h}^{t/h-1}f(h\eta)h$ & $(q-1)\displaystyle\sum_{\eta=\log_{q}(s)}^{\log_{q}(t)-1}f(q^{\eta})q^{\eta}$ \\
    \hline
    \end{tabular}
\end{table}

A function $f\in\crd{}(\T,\R)$ is called \emph{regressive} if $1+\mu f\neq0$ on $\T^{\kappa}$, and \emph{positively regressive} if $1+\mu f>0$ on $\T^{\kappa}$.
The \emph{set of regressive functions} and the \emph{set of positively regressive functions} are denoted by $\reg{}(\T,\R)$ and $\reg{+}(\T,\R)$, respectively, and $\reg{-}(\T,\R)$ is defined similarly.
For simplicity, we denote by $\creg{}(\T,\R)$ the set of real regressive constants, and similarly, we define the sets $\creg{+}(\T,\R)$ and $\creg{-}(\T,\R)$.

Let $f\in\reg{}(\T,\R)$, then the \emph{exponential function} $\mathrm{e}_{f}(\cdot,s)$ on a time scale $\T$ is defined to be the unique solution of the initial value problem
\begin{equation}
\begin{cases}
x^{\Delta}(t)=f(t)x(t)\quad\text{for}\ t\in\T^{\kappa}\\
x(s)=1
\end{cases}\notag
\end{equation}
for some fixed $s\in\T$.

If $f\in\reg{}(\T,\R)$ and $g\in\crd{}(\T,\R)$, then the unique solution of the dynamic equation
\begin{equation}
\begin{cases}
x^{\Delta}(t)=f(t)x(t)+g(t)\quad\text{for}\ t\in\T^{\kappa}\\
x(s)=\alpha
\end{cases}\notag
\end{equation}
is given by
\begin{equation}
x(t)=\mathrm{e}_{f}(t,s)\alpha+\int_{s}^{t}\mathrm{e}_{f}(t,\sigma(\eta))g(\eta)\Delta\eta\quad\text{for}\ t\in\T.\notag
\end{equation}
On the other hand, for $f\in\reg{}(\T,\R)$ and $g\in\crd{}(\T,\R)$, the unique solution of the dynamic equation
\begin{equation}
\begin{cases}
x^{\Delta}(t)=-f(t)x^{\sigma}(t)+g(t)\quad\text{for}\ t\in\T^{\kappa}\\
x(s)=\alpha
\end{cases}\notag
\end{equation}
is
\begin{equation}
x(t)=\mathrm{e}_{\ominus f}(t,s)\alpha+\int_{s}^{t}\mathrm{e}_{\ominus{}f}(t,\eta)g(\eta)\Delta\eta\quad\text{for}\ t\in\T.\notag
\end{equation}

For $h\in\R^{+}$, set $\C_{h}:=\{z\in\C:\ z\neq-1/h\}$, $\Z_{h}:=\{z\in\C:\ -\pi/h<\Img(z)\leq\pi/h\}$, and $\C_{0}:=\Z_{0}:=\C$.
For $h\in\R_{0}^{+}$, we define the \emph{cylinder transformation} $\xi_{h}:\C_{h}\to\Z_{h}$ by
\begin{equation}
\xi_{h}(z):=
\begin{cases}
z,&h=0,\\
\displaystyle\frac{1}{h}\Log(1+hz),&h>0
\end{cases}\quad\text{for}\ z\in\C_{h},\notag
\end{equation}
then the exponential function can also be written in the form
\begin{equation}
\mathrm{e}_{f}(t,s):=\exp\negthickspace\negmedspace\bigg\{\negmedspace\int_{s}^{t}\xi_{\mu(\eta)}\big(f(\eta)\big)\Delta\eta\bigg\}\quad\text{for}\ s,t\in\T.\notag
\end{equation}
Table~\ref{ptstbl2} illustrates the explicit forms of the exponential function on some well-known time scales.

\begin{table}[ht]
  \centering
    \caption{The exponential function}\label{ptstbl2}
    \begin{tabular}{c|ccc}
    \hline
    $\T$ & $\R$ & $h\Z$ ($h>0$) & $\qZ$ $(q>1)$ \\ \hline
    $\mathrm{e}_{f}(t,s)$ & $\exp\negthickspace\bigg\{\negthickspace\displaystyle\int_{s}^{t}f(\eta){\thinspace}\mathrm{d}\eta\bigg\}$ & $\displaystyle\prod_{\eta=s/h}^{t/h-1}\negthickspace\big(1+hf(h\eta)\big)$ & $\displaystyle\prod_{\eta=\log_{q}(s)}^{\log_{q}(t)-1}\negthickspace\big(1+(q-1)q^{\eta}f(q^{\eta})\big)$ \\
    \hline
    \end{tabular}
\end{table}

The exponential function $\mathrm{e}_{f}(\cdot,s)$ is strictly positive on $[s,\infty)_{\T}$ whenever \\ 
$f\in\reg{+}([s,\infty)_{\T},\R)$,
while $\mathrm{e}_{f}(\cdot,s)$ alternates in sign at right-scattered points of the interval $[s,\infty)_{\T}$ provided that $f\in\reg{-}([s,\infty)_{\T},\R)$
For $h\in\R_{0}^{+}$, let $z,w\in\C_{h}$, the \emph{circle plus} $\oplus_{h}$ and the \emph{circle minus} $\ominus_{h}$ are defined by $z\oplus_{h}w:=z+w+hzw$ and $z\ominus_{h}w:=(z-w)/(1+hw)$, respectively.
For $f,g\in\reg{}(\T,\R)$ and $r,s,t\in\T$, the exponential function satisfies the properties $\mathrm{e}_{f}(t,s)\mathrm{e}_{f}(s,r)=\mathrm{e}_{f}(t,r)$, $\mathrm{e}_{f}(t,s)=1/\mathrm{e}_{f}(s,t)=\mathrm{e}_{\ominus_{\mu}f}(s,t)$, $\mathrm{e}_{f}(t,s)\mathrm{e}_{g}(t,s)=\mathrm{e}_{f\oplus_{\mu}g}(t,s)$, $\mathrm{e}_{f}(t,s)/\mathrm{e}_{g}(t,s)=\mathrm{e}_{f\ominus_{\mu}g}(t,s)$.
Throughout the paper, we will abbreviate the operations $\oplus_{\mu}$ and $\ominus_{\mu}$ simply by $\oplus$ and $\ominus$, respectively.
It is also known that $\reg{+}(\T,\R)$ is a subgroup of $\reg{}(\T,\R)$, i.e., $0\in\reg{+}(\T,\R)$, $f,g\in\reg{+}(\T,\R)$ implies $f\oplus_{\mu} g\in\reg{+}(\T,\R)$ and $\ominus_{\mu} f\in\reg{+}(\T,\R)$, where $\ominus_{\mu}f:=0\ominus_{\mu}f$ on $\T$.

The readers are referred to \cite{MR1843232} for further interesting details in the time scale theory.

\subsection*{Acknowledgments}

The authors are grateful to the anonymous reviewer for valuable comments that greatly contributed to the present form of the 
paper.
E.~Braverman was partially supported by the NSERC research grant RGPIN-2015-05976.


\end{document}